\documentclass[12pt, reqno]{amsart}

\usepackage{fullpage}

\usepackage{graphicx}
\usepackage[vflt]{floatflt}
\usepackage{rotating}
\usepackage{pdflscape}

\usepackage[dvipsnames]{xcolor}
\usepackage{colortbl}

\usepackage{mathtools}
\usepackage{nicematrix}
\usepackage{blkarray}
\usepackage{tikz-cd}
\usepackage[all]{xy}

\usepackage{charter}
\usepackage{newtxmath}
\usepackage{textcomp}

\usepackage{enumitem}
\usepackage{setspace}

\usepackage{todonotes}
\usepackage{comment}

\usepackage{etoolbox}

\usepackage[hypertexnames=false,
    colorlinks,
    citecolor=red,
    linkcolor=blue,
    urlcolor=red,
    bookmarksopen,
    backref=page]{hyperref}

\usepackage[depth=3]{bookmark}

\let\oldtocsection=\tocsection
\let\oldtocsubsection=\tocsubsection
\let\oldtocsubsubsection=\tocsubsubsection
\renewcommand{\tocsection}[2]{%
  \hspace{0em}\vspace{0.5mm}\oldtocsection{#1}{#2}\vspace{0.5mm}}
\renewcommand{\tocsubsection}[2]{%
  \hspace{2em}\vspace{0.25mm}\oldtocsubsection{#1}{#2}\vspace{0.25mm}}
\renewcommand{\tocsubsubsection}[2]{%
  \hspace{2em}\oldtocsubsubsection{#1}{#2}}

\newcounter{dummy}
\makeatletter
\newcommand\myitem[1][]{%
  \item[#1]\refstepcounter{dummy}\def\@currentlabel{#1}}
\makeatother

\numberwithin{equation}{section}
\newtheorem{theorem}{Theorem}[section]
\newtheorem{lemma}[theorem]{Lemma}
\newtheorem{proposition}[theorem]{Proposition}

\newtheorem{corollary}[theorem]{Corollary}
\newtheorem*{thm*}{Theorem}

\theoremstyle{definition}
\newtheorem{example}[theorem]{Example}
\newtheorem{remark}[theorem]{Remark}
\newtheorem{definition}[theorem]{Definition}
\newtheorem{problem}[theorem]{Problem}

\renewcommand{\qedsymbol}{{\vrule height5pt width5pt depth1pt}}

\newtheorem{mainthm}{Theorem}

\newcommand{\be}{\begin{equation}}
\newcommand{\ee}{\end{equation}}
\newcommand{\bes}{\begin{equation*}}
\newcommand{\ees}{\end{equation*}}

\newcommand{\cA}{\mathcal{A}}

\newcommand{\cE}{\mathcal{E}}

\newcommand{\cH}{\mathcal{H}}

\newcommand{\cK}{\mathcal{K}}

\newcommand{\cX}{\mathcal{X}}
\newcommand{\cY}{\mathcal{Y}}

\newcommand{\bB}{\mathbb{B}}
\newcommand{\bC}{\mathbb{C}}
\newcommand{\bD}{\mathbb{D}}

\newcommand{\bF}{\mathbb{F}}
\newcommand{\bH}{\mathbb{H}}
\newcommand{\bL}{\mathbb{L}}
\newcommand{\bM}{\mathbb{M}}
\newcommand{\bN}{\mathbb{N}}

\newcommand{\bT}{\mathbb{T}}
\newcommand{\bU}{\mathbb{U}}
\newcommand{\bZ}{\mathbb{Z}}

\newcommand{\re}{\operatorname{Re}}

\newcommand{\Tr}{\operatorname{Tr}}

\newcommand{\diag}{\operatorname{diag}}

\newcommand{\spn}{\operatorname{span}}

\newcommand{\dom}{\operatorname{dom}}

\newcommand{\fB}{{\mathfrak{B}}}

\newcommand{\fD}{{\mathfrak{D}}}

\newcommand{\fI}{{\mathfrak{I}}}

\newcommand{\fP}{{\mathfrak{P}}}

\newcommand{\fr}{{\mathfrak{r}}}

\newcommand{\sB}{\scr{B}}

\newcommand{\foral}{\text{ for all }}
\newcommand{\qand}{\quad\text{and}\quad}

\newcommand{\AND}{\text{ and }}

\newcommand{\bmat}[1]{\begin{bmatrix} #1 \end{bmatrix}}

\newcommand{\bsmallmat}[1]{\begin{bsmallmatrix} #1 \end{bsmallmatrix}}

\newcommand{\ba}{{\mathbf{a}}}

\def\C{\mathbb{C}}

\def\cK{\mathcal{K}}

\def\scr{\mathscr}

\def\bi{\begin{itemize}}
\def\ei{\end{itemize}}

\def\F{\mathbb{F} _d ^+}

\def\bpm{\begin{pmatrix}}
\def\epm{\end{pmatrix}}
\newcommand{\bsm}{\left[\begin{smallmatrix}}
\newcommand{\esm}{\end{smallmatrix} \right] }

\def\ba{\begin{eqnarray*}}
\def\ea{\end{eqnarray*}}

\def\fskew{\C \ \mathclap{\, <}{\left( \right.} Z \mathclap{  \, \, \, \, \, >}{\left. \right)} \, }

\begin{document}

\nobreakdash

\title{Cyclicity of stable matrix free polynomials over non-commutative operator unit balls}

\author{Jeet Sampat}
\address{Department of Mathematics, University of Manitoba, Winnipeg, Canada}
\email{Jeet.Sampat@umanitoba.ca}

\author{Maximilian Tornes}
\address{Department of Mathematics, University of Manitoba, Winnipeg, Canada}
\email{Maximilian.Tornes@umanitoba.ca}

\date{} %~~~~~~~~ Leave empty unless necessary

\subjclass[2020]{Primary: 46L52; Secondary: 15A45, 32A08, 42B30, 47A16.}

\keywords{Cyclic vectors, stable polynomials, free Hardy space, non-commutative rational functions, parallel sum of operators.}

\thanks{JS acknowledges partial funding support from the Pacific Institute for the Mathematical Sciences. MT acknowledges partial funding support from the University of Manitoba Graduate Fellowship program.}

\begin{abstract}
    We consider the algebra of square matrices of bounded non-commutative (NC) functions over NC operator unit balls (unit balls corresponding to finite-dimensional operator spaces) and characterize cyclic matrix free polynomials with respect to the canonical weak-* topology. More precisely, we show that a matrix free polynomial generates a weak-* dense left/right ideal if and only if it is stable, i.e., non-singular at every point in the NC operator unit ball. To this end, we establish a version of the Neuwirth--Ginsberg--Newman inequality for stable matrix free polynomials.
    
    We combine our techniques with the theory of realizations to establish cyclicity of stable NC rational functions that are uniformly continuous across the boundary, and we recover known results about cyclicity of NC rational functions in the matrix-valued free Hardy space over the NC unit row-ball. Lastly, we introduce the NC parallel sum function: a stable NC rational function that is contractive over the NC bidisk, which cannot be extended uniformly across the boundary, and determine its cyclicity using properties of accretive operators.
\end{abstract}

\maketitle

\section{Introduction}\label{sec:intro}

Consider the unit disk $\bD$ in the complex plane $\bC$. Neuwirth, Ginsberg and Newman \cite{NGN-approximation} observed that if a polynomial $P \in \bC[z]$ has all its zeros in $\bC \setminus \bD$, then
\begin{equation}\label{eqn:NGN-one-var}
    \left\lvert \frac{P(z)}{P(rz)} \right\rvert \leq 2^{\deg(P)} \foral r < 1 \AND z \in \overline{\bD}.
\end{equation}
In fact, it suffices to prove this for a single factor $z - \alpha$ with $|\alpha| \geq 1$, in which case
\begin{equation*}
    \left\lvert \frac{z - \alpha}{rz - \alpha} \right\rvert = \left\lvert 1 + \frac{(1 - r)z}{rz - \alpha} \right\rvert \leq 1 + \left\lvert \frac{1 - r}{\alpha - rz} \right\rvert \leq 2.
\end{equation*}
It is then straightforward to generalize this to polynomials in several variables that are \emph{stable} (i.e., non-vanishing) with respect to the open unit polydisk $\bD^d$ (for some $d \in \bN$). Indeed, let $P \in \bC[z_1, \dots, z_d]$ be stable on $\bD^d$ and note that $P_w : z \mapsto P(z w_1, \dots, z w_d)$ is stable on $\bD$ for each $w = (w_1, \dots, w_d) \in \overline{\bD^d}$. Using \eqref{eqn:NGN-one-var} for each $P_w$ with $z = 1$ shows at once that
\begin{equation*}
    \left\lvert \frac{P(w)}{P(r w)} \right\rvert \leq 2^{\deg(P)} \foral r < 1 \AND w \in \overline{\bD^d},
\end{equation*}
where $\deg(P)$ denotes the total degree of $P$. We refer to this as the \emph{Neuwirth--Ginsberg--Newman (NGN) inequality}. In this article, we establish an NGN-type inequality for matrices of polynomials in freely non-commuting variables and showcase its application to \emph{cyclicity}.

\subsection{Commutative cyclic polynomials}\label{subsec:intro-comm-cyc.poly} The NGN inequality was used in \cite{NGN-approximation} to study the completeness of dilated systems of functions in $L^p(\bT)$, where $\bT := \partial \bD$ denotes the unit circle. In the context of \emph{Hardy spaces}, this yields the cyclicity of stable polynomials. Recall that the Hardy space $H^2(\bD^d)$ consists of all $f \in \operatorname{Hol}(\bD^d)$ given by a power-series in $d$ (commuting) variables $z = (z_1, \dots, z_d)$, i.e.,
\begin{equation*}
    f(z) = \sum_{\alpha \in \bZ_{\geq 0}^d} c_\alpha z^\alpha,
\end{equation*}
such that $\sum_\alpha|c_\alpha|^2 < \infty$, where each $\alpha = (\alpha_1, \dots, \alpha_d)$ is a $d$-tuple of non-negative integers $\bZ_{\geq 0}$, $c_\alpha \in \bC$, and $z^\alpha := z_1^{\alpha_1} z_2^{\alpha_2} \dots z_d^{\alpha_d}$. Clearly, $H^2(\bD^d)$ is a Hilbert space with respect to the $\ell^2$ inner-product given by the power-series coefficients. We say that $f \in H^2(\bD^d)$ is \emph{cyclic} if
\begin{equation*}
    S[f] := \overline{\spn} \{ z^\alpha f \ : \ \alpha \in \bZ_{\geq 0}^d \} = \overline{\{ Pf \ : \ P \in \bC[z_1, \dots, z_d] \}} = H^2(\bD^d).
\end{equation*}
$H^2(\bD^d)$ is also a reproducing kernel Hilbert space, i.e., the evaluation $\Lambda_w : f \mapsto f(w)$ is a bounded linear functional on $H^2(\bD^d)$ for each $w \in \bD^d$. Thus, a necessary condition for $f$ to be cyclic is that $f$ be non-vanishing on $\bD^d$ (indeed, if $f(w) = 0$ then every $g \in S[f]$ must also satisfy $g(w) = 0$). Combining the NGN inequality with basic facts about $H^2(\bD^d)$ immediately shows that a polynomial $P$ is cyclic if and only if it is stable on $\bD^d$ \cite[Theorem 5]{NGN-approximation} (see also \cite[Theorem 3.4(1)]{Nik-Hardy-sp}).

It is fairly clear how to generalize the notion of cyclicity to more exotic function spaces. Moving beyond the Hardy spaces and the unit polydisk reveals that the zero-set of a polynomial is still the key to determining its cyclicity. However, in most general cases, the set of boundary zeros also comes into play. A recent result of Mironov and the first named author \cite[Corollary 3.2]{MS-joint-cyc} shows that a polynomial is cyclic in a general topological vector space of analytic functions over some open set $\Omega \subset \bC$ if and only if it is stable with respect to the \emph{maximal domain} $\Omega_{\max}$, which consists of all points in $\bC$ (and, therefore, potentially outside of $\Omega$) at which a continuous evaluation functional can be defined on the whole space. Instances of a similar interplay between cyclicity of polynomials and their zero-sets have appeared in a variety of other contexts, such as:
\begin{enumerate}
    \item Dirichlet-type spaces \cite{BKKLSS-Dirichlet, Bergqvist-Dirichlet} and anisotropic Dirichlet-type spaces \cite{KKRS-aniso-Dirichlet} on $\bD^d$,
    \item weighted analytic $\ell^p$ spaces, i.e., $\ell^p_A(w)$ \cite{ST-Lpa-cyclicity},
    \item Dirichlet-type spaces on the \emph{Euclidean unit ball} $\bB_d \subset \bC^d$ \cite{KV-Dirichlet-ball, VZ-Dirichlet-ball},
    \item Drury--Arveson space and weighted Besov spaces on $\bB_d$ \cite{APRSS-Drury-Arveson}, etc.
\end{enumerate}
While progress has been significant in specific instances, the characterization of cyclicity in general analytic function spaces across one and several variables remains a deep and challenging open problem. It nevertheless continues to be a promising and productive area of study \cite{Sam-cyc-survey}.

\subsection{Free analysis}\label{subsec:intro-free.analysis}

\emph{Free analysis} or \emph{non-commutative function theory} is a conceptual generalization of the classical commutative function theory, and can be traced back to the work of Taylor from the 1970s \cite{Taylor-NC-functional-calc, Taylor-NC-functions}. While Taylor's original work on the extension of functional calculus to non-commuting tuples went largely unnoticed, NC function theory has since blossomed into a successful theory with works spanning across several areas of independent interests \cite{AMY-NC-book, BB-NC-book, KVV-NC-book}, and has applications to operator theory, systems/control theory as well as free probability \cite{BGM-NC-appl, HV-NC-appl, Pop-NC-appl, Voi-NC-appl}.

\subsubsection*{\texorpdfstring{\textbf{NC notation}}{NC notation}} The $d$-dimensional \emph{non-commutative (NC) universe} is defined as
\begin{equation*}
    \bM^d := \bigsqcup_{n \in \bN} M_n^d \cong \bigsqcup_{n \in \bN} M_n(\bC^d) \cong \bigsqcup_{n \in \bN} \bC^d \otimes M_n,
\end{equation*}
where $M_n^d$ consists of all $d$-tuples of $n \times n$ (complex) matrices for each $n \in \bN$ and $M_n := M_n^1$. We endow each $M_n^d$ with the \emph{supremum norm}
\begin{equation*}
    \|X\| := \max_{1 \leq j \leq d} \|X_j\| \ \foral \ X = (X_1, \dots, X_d) \in M_n^d,
\end{equation*}
and use it to obtain the \emph{disjoint union topology} on $\bM^d$: $\Omega \subseteq \bM^d$ is open in the disjoint union topology if and only if $\Omega(n) := \Omega \cap M_n^d$ is open for each $n \in \bN$. Although we do not make significant use of it in the sequel, one often considers the \emph{uniform topology} as well, which is generated by the following basic open sets: given any $X \in M_n^d$ and $r > 0$, the \emph{uniform NC ball} (with center $X$ and radius $r$) is defined as
\begin{equation*}
    B_\infty(X,r) := \bigsqcup_{m = 1}^\infty \{ Y \in M_{mn}^d \ : \ \|Y - I_m \otimes X \| < r \}.
\end{equation*}

To imbue further communication between each \emph{level} $\Omega(n)$, one usually works with \emph{NC sets}. Here, $\Omega \subset \bM^d$ is an NC set if $X, Y \in \Omega \Rightarrow X \oplus Y := \bsmallmat{X & 0 \\ 0 & Y} \in \Omega$. Given an NC set $\Omega$ as above, we say that $F : \Omega \to \bM^1$ is an \emph{NC function} if
\begin{enumerate}
    \item $F$ is \emph{graded}: $X \in \Omega(n) \Rightarrow F(X) \in M_n$,
    \item $F$ \emph{respects direct sums}: $X, Y \in \Omega(n) \Rightarrow F(X \oplus Y) = F(X) \oplus F(Y)$,
    \item $F$ \emph{respects similarities}: $X \in \Omega(n)$, $S \in GL_n$ and $S^{-1}XS = (S^{-1}X_1 S, \dots, S^{-1} X_d S) \in \Omega(n) \Rightarrow F(S^{-1} X S) = S^{-1} F(X) S$.
\end{enumerate}
The most fundamental feature of free analysis is that a mild local boundedness condition on an NC function $F$ is guaranteed to ensure that $F$ is continuous and holomorphic, in the sense that given any $X \in \Omega(n)$ and any ``direction" $H \in M_n^d$, the directional or G\^ateaux derivative of $F$ at $X$ in the direction $H$ exists:
\begin{equation}\label{eqn:NC-Gateaux-deriv}
    \partial_H F(X) := \lim_{t \to 0} \frac{F(X + tH) - F(X)}{t}.
\end{equation}
Moreover, $F$ has a total or Fr\'echet derivative at each $X \in \Omega$ (see \cite[Chapter 7]{KVV-NC-book}).

The primary domains of interest for us are the \emph{NC operator (unit) balls}, which are defined as follows. Let $\cE \subseteq \sB(\cH)$ be a $d$-dimensional operator space over some Hilbert space $\cH$, and let $\{Q_1, \dots, Q_d\}$ be a basis for $\cE$. We then introduce the linear operator-valued polynomial $Q(Z) := \sum_j Q_j Z_j$, which is to be interpreted functionally on $\bM^d$ via
\begin{equation*}
    Q(X) := \sum_{j = 1}^d Q_j \otimes X_j \foral X \in \bM^d.
\end{equation*}
The NC operator ball $\bD_Q$ corresponding to this linear map $Q$ is given by
\begin{equation*}
    \bD_Q := \{ X \in \bM^d \ : \ \|Q(X)\| < 1 \},
\end{equation*}
where $\|Q(X)\|$ is the operator norm of $Q(X)$ in $\sB(\cH \otimes \bC^n)$ if $X \in M_n^d$. It is easy to check that each $\bD_Q$ is a bounded NC set that is also uniformly open and \emph{matrix convex} \cite[Proposition 2.6]{Sampat-Shalit-Weak-star}. When $\cH \cong \bC^l$ is finite-dimensional, we may view $Q = (Q_{ij})_{l \times l}$ as simply a matrix of linear polynomials $\{Q_{ij}\}$ in $d$ freely non-commuting variables.

\begin{example}\label{eg:NC.row.ball.polydisk}
    Let $\cH = \bC^d$ for some $d \in \bN$.
    \begin{enumerate}
        \item The \emph{NC unit row-ball} $\fB_d$ is defined as
        \begin{equation*}
            \fB_d := \left\{ X \in \bM^d \ : \ \left\| \sum_{j = 1}^d X_jX_j^* \right\| < 1 \right\}.
        \end{equation*}
        However, it is easy to see that this corresponds to the linear operator-valued map
        \begin{equation*}
            Q(Z) = \bmat{Z_1 & \dots & Z_d} \cong \sum_{j = 1}^d E_{1j} Z_j,
        \end{equation*}
        where the $E_{ij}$'s denote the standard matrix units.
        \item The \emph{NC unit polydisk} $\fD_d$ is defined as
        \begin{equation*}
            \fD_d := \{ X \in \bM^d \ : \ \|X\| < 1 \},
        \end{equation*}
        which clearly corresponds to
        \begin{equation*}
            Q(Z) = \diag(Z_1, \dots, Z_d) \cong \sum_{j = 1}^d E_{jj} Z_j.
        \end{equation*}
    \end{enumerate}
\end{example}

\subsubsection*{\texorpdfstring{\textbf{NC function algebras}}{NC function algebras}} Given an NC operator ball $\bD_Q$, we consider the algebra of bounded NC functions on $\bD_Q$, i.e.,
\begin{equation*}
    H^\infty(\bD_Q) := \left\{ F : \bD_Q \to \bM^1 \ : \ F \text{ is NC} \AND \|F\|_Q := \sup_{X \in \bD_Q} \|F(X)\| < \infty \right\}.
\end{equation*}
These algebras are referred to as the \emph{NC Schur--Agler class}, and have appeared most prominently in the context of the NC interpolation problem and transfer function realizations, the isomorphism problem, as well as NC spectral radius formulae \cite{BMV-interpolation, KS-zeros-TFR, Sampat-Shalit-iso-prob, Sampat-Shalit-Weak-star, Shalit-Shamovich-spec-rad}.

It is clear that $H^\infty(\bD_Q)$ is a Banach algebra, but it is also an operator algebra: for each $k \in \bN$, consider the algebra $M_k(H^\infty(\bD_Q))$ of $k \times k$ matrices of $H^\infty(\bD_Q)$-functions endowed with the matrix norm
\begin{equation*}
    \|(F_{ij})\|^{(k)}_Q := \sup_{X \in \bD_Q} \|(F_{ij}(X))\|,
\end{equation*}
and note that the family of matrix norms $\{\|\cdot\|_Q^{(k)}\}_{k \in \bN}$ satisfies the Blecher--Ruan--Sinclair axioms \cite[Theorem 2.3.2]{BLM-op-sp-book}. It is also clear that the algebra $\bC \langle Z \rangle$ of polynomials in $d$ freely non-commuting variables, i.e., \emph{free polynomials}, sits inside $H^\infty(\bD_Q)$. We will also consider the algebra $M_k(\bC \langle Z \rangle)$, which consists of $k \times k$ matrices of free polynomials. Any element $P \in M_k(\bC \langle Z \rangle)$ is then called a \emph{matrix free polynomial}.

Based on the discussion surrounding \eqref{eqn:NC-Gateaux-deriv}, we note that each $F \in H^\infty(\bD_Q)$ is uniformly holomorphic, and exhibits an NC power-series centered at $0$ (see \cite[Theorem 7.21]{KVV-NC-book}):
\begin{equation*}
    F(Z) = \sum_{\alpha \in \bF_d^+} c_\alpha Z^\alpha,
\end{equation*}
where $\bF_d^+$ is the free monoid generated by the \emph{alphabet} $\cA = \{1, \dots, d\}$, each $\alpha = \alpha_1 \dots \alpha_l$ is a \emph{word} in $\cA$, each $c_\alpha \in \bC$ is a scalar, and $Z^\alpha = Z_{\alpha_1} \dots Z_{\alpha_l}$ is an \emph{NC monomial}. Moreover, this series converges absolutely and uniformly on $r \bD_Q$ for all $r < 1$.

\subsection{Main results}\label{subsec:intro-main.results} Fix $d, k \in \bN$ and let $\bD_Q \subset \bM^d$ be an NC operator ball as above. Also, let $GL_k(\bC \langle Z \rangle)$ be the collection of all \emph{invertible} matrix free polynomials, i.e., $P \in M_k(\bC \langle Z \rangle)$ such that $P^{-1} \in M_k(\bC \langle Z \rangle)$.

\subsubsection*{\texorpdfstring{\textbf{Section \ref{sec:Free.NGN.inequality}}}{Section 3}} We begin with an exploration of NGN-type bounds for matrix free polynomials in $M_k(\bC \langle Z \rangle)$ viewed as elements of $M_k(H^\infty(\bD_Q))$. $P \in M_k(\bC \langle Z \rangle)$ is said to be \emph{$Q$-stable} (or \emph{$\bD_Q$-stable}) if $\det P(X) \neq 0$ for all $X \in \bD_Q$. For any $r < 1$, we introduce the map $P^{(r)} : X \mapsto P(rX)$ and, in the spirit of the classical NGN inequality, want to know if
\begin{equation*}
    \| P^{(r)}(X)^{-1} P(X) \| \qand \| P(X) P^{(r)}(X)^{-1} \|
\end{equation*}
are uniformly bounded (in $r \in (0,1)$ and $X \in \bD_Q$) for any given $Q$-stable $P \in M_k(\bC \langle Z \rangle)$. Unfortunately, this is not true even in very simple cases. For instance, we show in Example \ref{example:Jordan.block.pencil} that the above quantities are unbounded for the case $d = 1$ and the $2 \times 2$ matrix polynomial $L(Z) := \bsmallmat{1 - Z & -Z \\ 0 & I - Z}$, which is clearly $\fD_1$-stable. Part of the reason why this is the case is that $L$ is not `irreducible,' in a sense. Indeed, we note in Remark \ref{rem:pencil.atomic.factor} that $L$ has a factorization
\begin{equation*}
    L(Z) = \bmat{I & 0 \\ 0 & I - Z} \bmat{I & -Z \\ 0 & I} \bmat{I - Z & 0 \\ 0 & I}.
\end{equation*}
However, we note that each of these factors satisfy an NGN-type bound. Our first main observation is that this idea follows through in general, and we need to consider the \emph{atomic factorization} of matrix free polynomials. To this end, we utilize tools such as \emph{linearization} of matrix free polynomials, and also the notion of spectral radius formulae associated to NC operator balls as recently introduced by Shalit and Shamovich \cite{Shalit-Shamovich-spec-rad}. These concepts are summarized briefly in Section \ref{sec:prelims} for the uninitiated reader.

Following the notation from \cite{Cohn-fir-local-book, Helton-Klep-Volcic-Free-factor}, recall that $M_k(\bC \langle Z \rangle)$ is a \emph{semifir}, i.e., a semi-free ideal ring, for each $k \in \bN$. Among other things, the semifir property ensures that every non-zero divisor $P \in M_k(\bC \langle Z \rangle)$ exhibits a factorization $P = P_1 \dots P_l$, where each $P_j$ is an \emph{atom}, i.e., it is not a product of two $F, G \not\in GL_k(\bC \langle Z \rangle)$. Moreover, each $P_j$ is unique up to stable associativity (see Definition \ref{def:stable.assoc}). With this in mind, our first main result is the following NGN-type result for $Q$-stable atoms.

\begin{mainthm}\label{mainthm:NGN.Q-stable.atom}
    If $P \in M_k(\bC \langle Z \rangle)$ is a $Q$-stable atom, then 
     \begin{align*}
        \sup_{r < 1} \big\| \big( P^{(r)} \big)^{-1} P \big\|_Q < \infty \qand \sup_{r < 1} \big\| P \big(P^{(r)} \big)^{-1} \big\|_Q < \infty.
    \end{align*}
\end{mainthm}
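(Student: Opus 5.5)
Write $P^{(r)}(X)^{-1}P(X)$ as a perturbation of the identity and bound it with an irreducible linearization together with a spectral-radius estimate. The first step is to pass from $P$ to a linear pencil. Since $P$ is a matrix free polynomial, it admits a monic linearization:
\[
P \oplus I_N = E\,(I - L(Z))\,F ,\qquad L(Z)=\sum_{j=1}^d A_j Z_j ,
\]
where $E,F\in GL_{k+N}(\bC\langle Z\rangle)$, the $A_j$ are scalar matrices, and we may normalize $P(0)=I$ after multiplying by the invertible constant $P(0)^{-1}$; note $P(0)$ is invertible because $0\in\bD_Q$. Because $E,F$ are invertible polynomial matrices, they and their inverses are bounded on $\bD_Q$ uniformly in $r$. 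Hence both quantities in the statement are controlled, up to constants, by
\[
\|(I - L(rX))^{-1}(I-L(X))\| \qand \|(I-L(X))(I-L(rX))^{-1}\| .
\]
The decisive extra input is that $P$ is an atom. By the factorization theory of \cite{Helton-Klep-Volcic-Free-factor}, we may then take the linearization $I-L$ to be minimal and irreducible, meaning the tuple $(A_1,\dots,A_d)$ has no nontrivial common invariant subspace. $Q$-stability of $P$ transfers to $\det(I-L(X))\neq 0$ on $\bD_Q$.

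The second step is the algebraic identity, exactly as in the one-variable computation:
\[
(I-L(rX))^{-1}(I-L(X)) = I - (1-r)\,(I-L(rX))^{-1}L(X) .
\]
Writing $\tfrac{1-r}{r}L(rX)=(1-r)L(X)$, it suffices to bound $(1-r)\|(I-L(rX))^{-1}\|$ uniformly for $X\in\bD_Q$ and $r<1$. In $L$ the coefficients $A_j$ are fixed, but $X$ ranges over all levels of $\bD_Q$. I will interpret $L(rX)=\sum_j A_j\otimes rX_j$ as an element of $M_{k+N}\otimes\cE$-type operator-valued evaluation. Stability says that $1$ is not in the spectrum of $L(X)$ for every $X\in\bD_Q$. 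Since $\bD_Q$ is circled ($tX\in\bD_Q$ for $|t|\le1$), no $\lambda$ with $|\lambda|\ge1$ lies in $\sigma(L(X))$. Thus the $Q$-spectral radius $\rho_Q(L):=\sup_{X\in\bD_Q}\rho(L(X))\le 1$, in the sense of \cite{Shalit-Shamovich-spec-rad}.

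The third step, and the main obstacle, is to upgrade $\rho_Q(L)\le1$ to a similarity: find an invertible $S$ with
\[
\sup_{X\in\bD_Q}\|(S\otimes I)L(X)(S^{-1}\otimes I)\|\le 1 .
\]
This is where irreducibility must enter. The Shalit--Shamovich spectral radius formula gives $\rho_Q(L)=\inf_S\|S L S^{-1}\|_Q$, but the infimum need not be attained; the Jordan block in Example \ref{example:Jordan.block.pencil} is exactly a failure of attainment. I would first try a Rota/Perron–Frobenius-type argument for the completely positive map $\Phi(T)=\sum_{i,j}\ldots$ associated to $\bD_Q$ and $L$. Irreducibility of the tuple makes this map irreducible, so it has a positive definite eigenvector $T$ with eigenvalue $\rho_Q(L)$, and setting $S=T^{-1/2}$ attains the infimum. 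A fallback is to show that the nilpotent-like Jordan obstruction produces a nontrivial invariant subspace, contradicting irreducibility.

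Given such an $S$, $\|\tilde L(X)\|\le1$ on $\bD_Q$. Then $\|(I-\tilde L(rX))^{-1}\|\le (1-r)^{-1}$ by a Neumann series, since $\|\tilde L(rX)\|\le r$. This yields
\[
\|(I-L(rX))^{-1}(I-L(X))\|\le 1+\|S\|\|S^{-1}\|\bigl(1+\|S\|\|S^{-1}\|\bigr) .
\]
The right-multiplied version follows symmetrically from $(I-L(X))(I-L(rX))^{-1}=I-(1-r)L(X)(I-L(rX))^{-1}$. Multiplying back by $E,F$, their inverses, and $P(0)$ completes the bound.
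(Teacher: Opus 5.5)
Your outline is the paper's route: normalize $P(0)=I$, linearize the atom into an irreducible monic pencil via \cite{Helton-Klep-Volcic-Free-factor}, turn $Q$-stability into a spectral-radius bound, conjugate the coefficients into $\overline{\bD_Q^\circ}$, and use the Neumann-series bound $(1-r)^{-1}$. However, the reduction at the end of your first step is false as stated. From $P\oplus I_N=E\,(I-L)\,F$ you get
\[
(P(rX)\oplus I)^{-1}(P(X)\oplus I)=F(rX)^{-1}\,(I-L(rX))^{-1}\,E(rX)^{-1}E(X)\,(I-L(X))\,F(X).
\]
So only the outer factors $F(rX)^{-1}$ and $F(X)$ can be discarded at the cost of $\|F^{-1}\|_Q\|F\|_Q$. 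The factor $E(rX)^{-1}E(X)$ is trapped between the two pencil factors, does not commute with them, and is not $I$. Since $\|(I-L(rX))^{-1}\|$ genuinely grows like $(1-r)^{-1}$ near boundary zeros, a merely bounded middle factor can destroy the estimate. For instance, $\diag(1-r,1)^{-1}\bsmallmat{0&g\\0&1}$ has an entry $g/(1-r)$; this is the same mechanism as in Example~\ref{example:Jordan.block.pencil}. The same problem occurs with $F(X)F(rX)^{-1}$ in $P\big(P^{(r)}\big)^{-1}$.

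The missing idea is that the middle factor is $I+O(1-r)$ uniformly on $\bD_Q$. Since $E$ is a matrix polynomial, Lemma~\ref{lem:NGN.polynomials} gives $\|E-E^{(r)}\|_Q\le C_E(1-r)$, and then
\[
(I-L(rX))^{-1}E(rX)^{-1}E(X)(I-L(X))=(I-L(rX))^{-1}(I-L(X))+(I-L(rX))^{-1}E(rX)^{-1}\big(E(X)-E(rX)\big)(I-L(X)).
\]
Once you have your similarity $S$, the second term has norm at most $\frac{\kappa(S)}{1-r}\,\|E^{-1}\|_Q\,C_E(1-r)\,\big(1+\kappa(S)\big)$. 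This is exactly what Lemma~\ref{lem:NGN.linear.pencil} does for $P=FL_A$, and it is the step your argument skips. Your own ingredients suffice to close it, but as written the reduction is a non sequitur.

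Your third step, which you flag as the main obstacle, does not require a new Perron--Frobenius argument. Your sketch leaves $\Phi$ unspecified, so as written it is not a proof for a general operator space $\cE$. In the paper's notation, Theorem~\ref{thm:spec.rad.dom.pencil-SS} turns stability of the pencil into $\rho_{Q^\circ}(A)\le 1$. Theorem~\ref{thm:spec.rad.properties-SS}, applied to the polar dual ball and using irreducibility when $\rho_{Q^\circ}(A)=1$, then gives $S$ with $S^{-1}AS\in\overline{\bD_Q^\circ}$. By \eqref{eqn:polar.dual.boundary.condition}, this is precisely your condition $\sup_{X\in\bD_Q}\big\|\sum_j S^{-1}A_jS\otimes X_j\big\|\le 1$, and it is how the paper proceeds.

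One small point: stable associativity gives $P\oplus I=E\,(L_B\oplus I)\,F$, and the padded tuple $B\oplus 0$ is reducible. Irreducibility must therefore be applied to $B$, with the similarity taken as $S_B\oplus I$. This is harmless, but your write-up conflates the two.
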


An immediate consequence of the atomic factorization and Theorem \ref{mainthm:NGN.Q-stable.atom} is Corollary \ref{cor:NGN.general}, which states that if $P$ is $Q$-stable and has atomic factorization $P = P_1 \dots P_l$ then
\begin{align*}
        \sup_{r < 1} \big\| \big( {P_1}^{(r)} \big)^{-1} P_1 \big( {P_2}^{(r)} \big)^{-1} P_2 \dots \big( {P_l}^{(r)} \big)^{-1} P_l \big\|_Q &< \infty, \\
        \sup_{r < 1} \big\| P_1 \big( {P_1}^{(r)} \big)^{-1} P_2 \big( {P_2}^{(r)} \big)^{-1} \dots P_l \big( {P_l}^{(r)} \big)^{-1} \big\|_Q &< \infty.
\end{align*}
Of course, one wonders what the optimal bounds are in either case. The proof of Theorem \ref{mainthm:NGN.Q-stable.atom} reveals that our bounds depend on various quantities associated with $P$ and not just its degree. We provide several examples and showcase concrete situations where a reasonable bound can be obtained (see Proposition \ref{prop:improved.bounds} and Example \ref{eg:concrete.bounds}). In general, however, it is unclear if there is an optimized strategy to achieve the best possible bound (see also Section \ref{subsec:NGN.using.FM.realzn}, where we incorporate the theory of \emph{FM-realizations} to obtain NGN-type bounds).

\subsubsection*{\texorpdfstring{\textbf{Section \ref{sec:cyclicity}}}{Section 4}} In a recent work of Shalit and the first named author \cite{Sampat-Shalit-Weak-star}, it was established that $H^\infty(\bD_Q)$ has a canonical weak-* topology. Moreover, the canonical pre-dual is unique in the sense that evaluations at matrix points $X \in \bD_Q$ are all weak-* continuous. Using basic facts from functional analysis, we show in Proposition \ref{prop:uniq.pre-dual.matrix.case} that this gives rise to a canonical weak-* topology on $M_k(H^\infty(\bD_Q))$ for each $k \in \bN$, given by a unique pre-dual (in the same sense as above). 

Given any $F \in M_k(H^\infty(\bD_Q))$, we say that $F$ is \emph{left/right (weak-*) cyclic} if the weak-* closed left/right ideal generated by $F$ is equal to $M_k(H^\infty(\bD_Q))$. As in the classical case, we note in Lemma \ref{lem:cyclic.iff.identity} that $F$ is left/right cyclic if and only if the `constant' NC function $I$ lies in its left/right weak-* closed ideal, from which we obtain a simple necessary condition for cyclicity: if $F$ is left/right cyclic then $F$ is $Q$-stable. Using Theorem \ref{mainthm:NGN.Q-stable.atom} we obtain our main result in Section \ref{sec:cyclicity}, which shows that the converse holds above when $F$ is a matrix free polynomial.

\begin{mainthm}\label{mainthm:cyc.matrix.free.poly}
    $P \in M_k(\bC \langle Z \rangle)$ is left/right cyclic in $M_k(H^\infty(\bD_Q))$ if and only if $P$ is $Q$-stable.
\end{mainthm}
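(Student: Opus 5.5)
Necessity is already in hand: by Lemma \ref{lem:cyclic.iff.identity}, left/right cyclicity of $P$ means that $I$ lies in the weak-* closed left/right ideal generated by $P$. Evaluation at each $X \in \bD_Q$ is weak-* continuous (Proposition \ref{prop:uniq.pre-dual.matrix.case}). So if $\det P(X) = 0$, pick $v \neq 0$ with $P(X)v = 0$ (left ideal) or $v^*P(X) = 0$ (right ideal). This annihilation condition persists under weak-* limits of the elements $FP$ (respectively $PF$), so $I$ cannot lie in the closed ideal, a contradiction. Hence $P$ is $Q$-stable.

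For sufficiency, I treat the left ideal; the right case is symmetric, using the second estimate of Theorem \ref{mainthm:NGN.Q-stable.atom}. Let $P$ be $Q$-stable. Then $P$ is not a zero divisor, since $P(X)$ is invertible at points near $0$. So $P$ has an atomic factorization $P = P_1 \cdots P_l$, and each atom $P_j$ is itself $Q$-stable because $\det P(X) = \prod_j \det P_j(X)$.

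The first step is to show that for each $r<1$ the function $(P^{(r)})^{-1}$ lies in $M_k(H^\infty(\bD_Q))$. For $X \in \bD_Q$, the point $rX$ ranges over $r\overline{\bD_Q}$. I need $\det P \neq 0$ on $r\overline{\bD_Q}$, with $\|P(rX)^{-1}\|$ uniformly bounded there. Pointwise invertibility holds at every level, but the uniform bound across all levels $n$ is delicate. I would obtain it from the linearization/realization machinery of Section \ref{sec:prelims}: write $P^{-1}$ via a descriptor realization $C(J - L(Z))^{-1}B$ and use the Shalit--Shamovich spectral radius formula to see that $J - L(rX)$ is boundedly invertible on $r\bD_Q$. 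I expect this is exactly what the proof of Theorem \ref{mainthm:NGN.Q-stable.atom} already uses, so I may simply invoke that part. Given this, the ideal generated by $P$ contains
\[
G_r := (P^{(r)})^{-1}P = (P_l^{(r)})^{-1}\cdots(P_1^{(r)})^{-1}P_1\cdots P_l .
\]

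The main obstacle is that Theorem \ref{mainthm:NGN.Q-stable.atom} bounds the interleaved products from Corollary \ref{cor:NGN.general}, not $G_r$ itself. I would therefore avoid needing a uniform bound on $G_r$ and instead peel off one atom at a time by induction on $l$. Let $\mathcal{J}$ be the weak-* closed left ideal generated by $P$. The element $(P_1^{(r)})^{-1}P_1$ is bounded uniformly in $r$ by Theorem \ref{mainthm:NGN.Q-stable.atom}. It converges pointwise on $\bD_Q$ to $I$ as $r \to 1$, since $P_1(rX) \to P_1(X)$, which is invertible. On bounded sets the weak-* topology is controlled by pointwise convergence at points of $\bD_Q$; this follows from the uniqueness of the pre-dual, via a Krein--Smulian/bounded-net argument. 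Hence $(P_1^{(r)})^{-1}P_1 \to I$ weak-*.

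Now multiplication on the right by the fixed polynomial $P_2\cdots P_l$ is weak-* continuous. So $P_2\cdots P_l$ lies in the weak-* closure of $\{(P_1^{(r)})^{-1}P_1 P_2 \cdots P_l\} \subseteq \mathcal{J}$. Repeating this with $P_2$, then $P_3$, and so on, gives $I \in \mathcal{J}$ after $l$ steps. Lemma \ref{lem:cyclic.iff.identity} then yields cyclicity. In the right case, $\mathcal{J}$ is the right ideal and I peel from the right end, multiplying $P_1\cdots P_{l-1}P_l(P_l^{(r)})^{-1}$ and using $\sup_r\|P_l(P_l^{(r)})^{-1}\|_Q<\infty$.
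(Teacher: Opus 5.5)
Your proposal is correct and follows the paper's proof essentially step for step: the atomic factorization, the uniform bound on $(P_1^{(r)})^{-1}P_1$ from Theorem \ref{mainthm:NGN.Q-stable.atom}, bounded pointwise convergence upgraded to weak-* convergence (Proposition \ref{prop:weak-*.topology.properties}(2)), and peeling off one atom at a time until $I$ lies in the closed ideal, after which Lemma \ref{lem:cyclic.iff.identity} applies. The differences are cosmetic. You prove necessity with an annihilating vector, where the paper uses continuity of the determinant. The fixed-$r$ boundedness of $(P_1^{(r)})^{-1}$, which you sketch via realizations, is exactly the paper's separate lemma, proved there by linearization and the Shalit--Shamovich boundedness result; it also follows from your statement for $P$, since $(P_1^{(r)})^{-1}=P_2^{(r)}\cdots P_l^{(r)}(P^{(r)})^{-1}$.
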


\subsubsection*{\texorpdfstring{\textbf{Section \ref{sec:cyclicity.rational}}}{Section 5}} Following \cite{HMV-NC.rat}, recall that an NC rational function $\fr \in \fskew$ is an equivalence class of NC rational expressions in the freely non-commuting variables $Z = (Z_1, \dots, Z_d)$, and its domain (denoted by $\dom \fr$) is the union of the domains of each of these NC rational expressions. In Section \ref{sec:cyclicity.rational}, we use the theory of \emph{descriptor realizations} and extend the above theorem to certain matrices of \emph{NC rational functions} in $M_k(H^\infty(\bD_Q))$.

\begin{mainthm}\label{mainthm:cyc.NC.rationals}
     Let $\fr \in M_k(\fskew)$ be a matrix of NC rational functions and let $s \bD_Q\subset \dom \fr$ for some $s>1$. Then, $\fr$ is left/right cyclic in $M_k(H^\infty(\bD_Q))$ if  and only if $\fr$ is $Q$-stable.
\end{mainthm}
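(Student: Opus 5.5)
Necessity is the easy direction. If $\fr$ is left (or right) cyclic, then by Lemma \ref{lem:cyclic.iff.identity} the constant $I$ lies in the weak-* closed left/right ideal generated by $\fr$. Evaluation at each $X \in \bD_Q$ is weak-* continuous (Proposition \ref{prop:uniq.pre-dual.matrix.case}). Hence if $\fr(X)$ were singular for some $X \in \bD_Q$, with kernel vector $v$ (or a left null vector in the right-ideal case), then every element $G\fr$ of the ideal satisfies $(G\fr)(X)v = 0$. This identity passes to weak-* limits, so $I(X)v = 0$, which is absurd. Thus $\fr$ is $Q$-stable.

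For sufficiency, I plan to reduce to Theorem \ref{mainthm:cyc.matrix.free.poly} by writing $\fr = P\,G$ (or $G\,P$), where $P$ is a $Q$-stable matrix free polynomial and $G$ is invertible in $M_k(H^\infty(\bD_Q))$. The main tool is a descriptor realization, which I would try first:
\begin{equation*}
\fr(Z) = D + C\,L(Z)^{-1} B, \qquad L(Z) = J - \textstyle\sum_j A_j Z_j,
\end{equation*}
with $L$ a monic (or at least minimal) pencil, so that $\dom \fr$ is exactly where $\det L \neq 0$. The hypothesis $s\bD_Q \subset \dom\fr$ then gives invertibility of $L$ on $s\bD_Q$. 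By compactness-type uniform bounds for pencils (a spectral-radius argument in the spirit of \cite{Shalit-Shamovich-spec-rad}), $L(X)^{-1}$ is uniformly bounded on $\overline{\bD_Q}$, i.e.\ on $r\bD_Q$ with $1<r<s$. In particular every entry of $L^{-1}$, and hence $\fr$, lies in $H^\infty$.

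The key step is to clear denominators while keeping $Q$-stability. Form the block matrix
\begin{equation*}
\mathcal{L} = \bmat{L & B \\ -C & D}.
\end{equation*}
Its Schur complement with respect to $L$ is exactly $\fr$. Hence, on $\bD_Q$,
\begin{equation*}
\mathcal{L} = \bmat{I & 0 \\ -CL^{-1} & I}\bmat{L & 0 \\ 0 & \fr}\bmat{I & L^{-1}B \\ 0 & I},
\end{equation*}
where the outer factors are invertible in $M_N(H^\infty(\bD_Q))$ because $L^{-1}$ is bounded. Since $L$ and $\fr$ are both invertible on $\bD_Q$, the matrix polynomial $\mathcal{L}$ is $Q$-stable. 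By Theorem \ref{mainthm:cyc.matrix.free.poly}, $\mathcal{L}$ is cyclic in $M_N(H^\infty(\bD_Q))$. Consequently $\operatorname{diag}(L,\fr)$ is cyclic as well, since it differs from $\mathcal{L}$ only by multiplication by invertible elements on each side.

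The remaining step is to pass to the corner. Given a net $G_\lambda$ with $G_\lambda \operatorname{diag}(L,\fr) \to I$ weak-*, compress to the $(2,2)$ block. This gives $(G_\lambda)_{22}\fr + (G_\lambda)_{21}\cdot 0 = (G_\lambda \operatorname{diag}(L,\fr))_{22} \to I_k$. Compression is weak-* continuous, which follows from the construction of the pre-dual in Proposition \ref{prop:uniq.pre-dual.matrix.case}. So $\fr$ is left cyclic, and the right case is symmetric.

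I expect the main obstacle to be the uniform boundedness of $L^{-1}$ on $\bD_Q$. This needs minimality of the realization, so that singularities of $L$ coincide with $\dom \fr$. It also needs an argument that invertibility on $s\bD_Q$ implies a uniform bound on the closure of $\bD_Q$ across all matrix levels, likely via matrix convexity of $\bD_Q$ together with the spectral radius formula.
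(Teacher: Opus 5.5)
Your proof is correct and follows essentially the same route as the paper: a descriptor realization whose pencil is invertible on $\dom\fr\supset s\bD_Q$, the Shalit--Shamovich results (Theorems \ref{thm:spec.rad.dom.pencil-SS} and \ref{thm:spec.rad.properties-SS}) to make $L^{-1}$ bounded on $\bD_Q$, a block factorization exhibiting $\fr$ as a Schur complement of a $Q$-stable linear matrix polynomial, Theorem \ref{mainthm:cyc.matrix.free.poly} applied to that polynomial, and extraction of the corner block. The differences are cosmetic. You use the $2\times 2$ Schur-complement factorization, with both triangular factors invertible in $H^\infty$, where the paper uses the $3\times 3$ factorization \eqref{eq:factorization.Arora.PhD}. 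Also, the property of the realization you actually need is $\dom\fr\subset\{\det L\neq 0\}$ rather than monicity, which is exactly what the paper takes from Vol\v{c}i\v{c}.
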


In Corollary \ref{cor:cyclic.NC.rat.Hardy}, we use the well-known identification of $H^\infty(\fB_d)$ as the \emph{multiplier algebra} of the \emph{free Hardy space} $\bH^2_d$ (see \cite[Theorem 3.1]{SSS-algebras}) to recover a known complete characterization of cyclic NC rational functions in $\bH^2_d$. Indeed, we know from \cite[Theorems A and C]{JMS-ratFock} that $\fr \in \bH^2_d$ if and only if there exists $s > 1$ such that $s \fB_d \subset \dom \fr$, and that $\fr$ is cyclic in $\bH^2_d$ if and only if $\fr$ is $\fB_d$-stable. Recent work of Arora, Augat, Jury and Sargent \cite{Arora-Augat-Jury-Sargent-free-OPA} and of Arora \cite{AroraPhD} employs techniques from the theory of \emph{optimal polynomial approximations (OPAs)} to provide different proofs of the same fact. Similarly, our Theorems \ref{mainthm:cyc.matrix.free.poly} and \ref{mainthm:cyc.NC.rationals} provide yet another approach to establish the cyclicity of stable NC rational functions in $\bH^2_d$. We note that this is possible since the weak operator topology on $H^\infty(\fB_d)$ coincides with the weak-* topology as above \cite{Davidson-Pitts-inv}, however, such a representation is not known to exist for a general $\bD_Q$ (e.g., \cite[Theorem 2.4]{Sampat-Shalit-iso-prob} demonstrates certain challenges when $\bD_Q = \fD_d$).

Lastly, in Section \ref{subsec:NC.parallel.sum}, we introduce the \emph{NC parallel sum function}
\begin{equation*}
    \fP(Z,W) := (I - Z)(2I - Z - W)^{-1}(I - W) = \big( (I - Z)^{-1} + (I - W)^{-1} \big)^{-1} = (I - W)(2I - Z - W)^{-1}(I - Z),
\end{equation*}
which is inspired by the parallel sum operation on positive semi-definite matrices as introduced by Anderson and Duffin \cite{AD-parallel-sum}. In Corollary \ref{cor:cyc.of.NC.par.sum}, we show that $\fP$ is a $\fD_2$-stable contractive NC map that (i) has a singularity at $(I,I) \in \overline{\fD_2}$ and (ii) is \emph{accretive}, i.e., $\re \fP(Z,W) \succeq 0$ for all $(Z,W) \in \fD_2$. In Corollary \ref{cor:cyclicity.of.accretive.NC.func}, we show that any accretive NC function $F \in M_k(H^\infty(\bD_Q))$ is always left/right cyclic. In particular, the cyclicity of $\fP$ follows from its accretivity instead of Theorem \ref{mainthm:cyc.NC.rationals} or even a direct application of our free NGN-type bound (see Remark \ref{rem:failure.NGN.par.sum.cyc}).

\section{Preliminaries}\label{sec:prelims}

\subsection{Spectral radius corresponding to an operator space}\label{subsec:spec.rad}

Given any operator space $\cE$, we consider the unit ball of all square matrices over $\cE$, i.e.,
\begin{equation*}
    \bB_\cE := \bigsqcup_{n \in \bN} \{ X \in M_n(\cE) \ : \ \|X\|_n < 1 \},
\end{equation*}
where $\{\left\| \cdot \right\|_n\}_{n \in \bN}$ forms a compatible family of matrix-norms on $\cE$. In what follows, let $\cE \subseteq \sB(\cH)$ be a concrete operator space for some Hilbert space $\cH$ and suppose $\dim \cE = d \in \bN$. We also fix a basis $\{Q_1, \dots, Q_d\}$ for $\cE$ and introduce the linear operator-valued polynomial $Q(Z) := \sum_j Q_j Z_j$. The corresponding NC operator ball $\bD_Q$ then provides a coordinate representation for $\bB_\cE$ via
\begin{equation*}
    \bD_Q(n) \ni X \ \leftrightarrow \ Q(X) \in M_n(\cE)
\end{equation*}
for all $n \in \bN$. The closure of $\bD_Q$ in the disjoint union topology is readily verified to be
\begin{equation*}
    \overline{\bD_Q} := \{X \in \bM^d \ : \ \|Q(X)\| \leq 1 \}.
\end{equation*}

Following \cite[Remark 3.6]{Shalit-Shamovich-spec-rad}, we also introduce the \emph{polar dual} $\bD_Q^\circ$ of $\bD_Q$ as the NC set
\begin{equation*}
    \bD_Q^\circ := \left\{ Y \in \bM^d \ : \ \left\| \sum_{j = 1}^d Y_j \otimes X_j \right\| < 1 \ \foral \ X \in \bD_Q \right\}.
\end{equation*}
It is easy to verify that $\bD_Q^\circ$ corresponds to $\bB_{\cE^*}$, i.e., it is the NC operator ball given by the dual operator space $\cE^*$ (as in \cite[Section 2.3]{Pisier_2003}), and $\bD_Q^\circ$ is equal to $\bD_{Q^\circ}$ for some linear operator-valued polynomial $Q^\circ(W) := \sum_j Q^\circ_j Z_j$. Moreover, we have
\begin{equation}\label{eqn:polar.dual.norm.prop}
    \left\| \sum_{j = 1}^d Y_j \otimes X_j \right\| \leq \|Y\|_{Q^\circ} \|X\|_Q \foral Y \in \bD_Q^\circ, \ X \in \bD_Q.
\end{equation}
Lastly, note that $Y \in \overline{\bD_Q^\circ}$ if and only if
\begin{equation}\label{eqn:polar.dual.boundary.condition}
    \left\| \sum_{j = 1}^d Y_j \otimes X_j \right\| \leq 1 \ \foral \ X \in \bD_Q.
\end{equation}

Shalit and Shamovich \cite{Shalit-Shamovich-spec-rad} recently introduced the notion of a spectral radius corresponding to a given finite-dimensional operator space $\cE \subseteq\sB(\cH)$ as above. We make frequent use of some of their main observations in the sequel. Thus, for the reader's convenience, we present the statements of these facts in this subsection. Note that the technical details surrounding the definition are not necessary to digest the rest of our discussion. We nevertheless point the interested reader to \cite[Section 1.5]{Pisier_2003} or \cite[Chapter 17]{Paulsen-Op.Alg.-Book} for background on minimal/spatial and Haagerup tensor products.

\begin{definition}\label{def:NC.spec.rad}
    The \emph{$Q$-spectral radius} of $T = (T_1, \dots, T_d) \in \sB(\cK)^d$ is defined as
    \begin{equation*}
        \rho_Q(T) := \lim_{n \to \infty} \left\| \sum_{|w| = n} T^w \otimes_{\min} Q_{w_1} \otimes_h Q_{w_2} \otimes_h \dots \otimes_h Q_{w_n} \right\|^{\frac{1}{n}}.
    \end{equation*}
    Here, $\cK$ is some Hilbert space, $\otimes_{\min}$ and $\otimes_h$ are the minimal and Haagerup tensor products, respectively, the sum is taken over all words $w = w_1 w_2 \dots w_n \in \bF_d^+$ of length $n$, and the norm is considered inside the operator space
    \begin{equation*}
        \sB(\cK) \otimes_{\min} (\underbrace{\cE \otimes_h \dots \otimes_h \cE}_{n\text{ times}}).
    \end{equation*}
\end{definition}

As noted in \cite[Example 2.10]{Shalit-Shamovich-spec-rad}, the spectral radius corresponding to $\bD_Q = \fB_d$ can simply be written as
\begin{equation*}
    \rho_{\fB_d}(T) = \lim_{n \to \infty} \left\| \sum_{|w| = n} T^w (T^w)^* \right\|^{\frac{1}{2n}},
\end{equation*}
which coincides with the notion of \emph{joint spectral radius} for a tuple of operators as introduced by Bunce \cite{Bunce-spec.rad}, and later explored by Popescu \cite{Popescu-spec.rad}.

The following facts are pertinent to our discussion. Recall that $A = (A_1, \dots, A_d) \in M_k^d$ is said to be \emph{irreducible} if $\{A_1, \dots, A_d\}$ generate $M_k$ as a $\bC$-algebra (equivalently, the $A_j$'s have no common non-trivial invariant subspace).

\begin{theorem}[Corollary 2.12, \cite{Shalit-Shamovich-spec-rad}]\label{thm:spec.rad.properties-SS}
    The following hold for any $A \in M_n^d$ and $ n \in \bN$:
    \begin{enumerate}
        \item $\rho_Q(A) < 1$ if and only if there exists $S \in GL_n$ such that $S^{-1} A S \in \bD_Q$.

        \item If $A$ is irreducible and $\rho_Q(A) = 1$, then there exists $S \in GL_n$ such that $S^{-1} A S \in \overline{\bD_Q}$.
    \end{enumerate}
\end{theorem}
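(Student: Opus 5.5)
Both parts rest on one fact: $\rho_Q$ is invariant under similarity. Each summand $T^w \otimes Q_{w_1}\otimes_h\cdots\otimes_h Q_{w_n}$ transforms under $T\mapsto S^{-1}TS$ by conjugating the $\sB(\cK)$-leg by $S$. So the $n$-th norm changes by at most the factor $\|S\|\|S^{-1}\|$, which disappears after taking $n$-th roots.

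\textbf{Part (1), forward direction.} Suppose $X=S^{-1}AS\in\bD_Q$, and view $Q(X)$ as an element of $M_n(\cE)$. The $n$-fold sum $\sum_{|w|=m} X^w\otimes Q_{w_1}\otimes_h\cdots\otimes_h Q_{w_m}$ is the $m$-fold matrix (Haagerup) product $Q(X)\odot\cdots\odot Q(X)$. The Haagerup norm is submultiplicative on such matrix products, so this term has norm at most $\|Q(X)\|^m$. Hence $\rho_Q(A)=\rho_Q(X)\le\|Q(X)\|<1$.

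\textbf{Part (1), converse.} I would prove this in Rota/Paulsen style. Assume $\rho_Q(A)<r<1$, and let $\cA_\cE$ be the operator algebra generated by $\cE$: the free polynomials with matrix norms coming from $\bigoplus_m \cE^{\otimes_h m}$, i.e. the tensor algebra of $\cE$. Define the homomorphism $\pi(Z_j)=A_j/r$. By the definition of $\rho_Q$, the degree-$m$ part of $\pi$ has cb-norm at most $C(\rho_Q(A)/r)^m$ for a constant $C$. Summing this geometric series shows that $\pi$ is completely bounded on $\cA_\cE$. Paulsen's similarity theorem then yields $S$ with $S^{-1}\pi(\cdot)S$ completely contractive. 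In particular $\|Q(S^{-1}AS)\|\le r<1$, so $S^{-1}AS\in\bD_Q$.

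\textbf{Part (2).} Let $A$ be irreducible with $\rho_Q(A)=1$, and take $r_k\downarrow 1$. Applying part (1) to $A/r_k$ gives $S_k$ with $\|Q(S_k^{-1}AS_k)\|\le r_k$. Normalise so that $\|S_k\|=1$. If $\sup_k\|S_k^{-1}\|<\infty$, pass to a convergent subsequence $S_k\to S\in GL_n$; then $\|Q(S^{-1}AS)\|\le1$ and we are done.

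\textbf{Main obstacle: excluding blow-up of $\|S_k^{-1}\|$.} This is where irreducibility must enter. The conjugates $B_k=S_k^{-1}AS_k$ stay bounded because $\|Q(\cdot)\|$ is a norm on $M_n^d$; I would use the fact that $Q$ has linearly independent coefficients. Write $S_k=U_kD_kV_k$ (singular value decomposition) and pass to subsequences so that the unitaries converge and the ratios of singular values converge in $[0,1]$. Grouping singular values by their rate of decay gives a flag of subspaces. Boundedness of $D_k^{-1}(U_k^*AU_k)D_k$ forces the blocks of $\lim U_k^*AU_k$ below this flag to vanish: an entry mapping a slow block to a fast block gets multiplied by a ratio that tends to infinity. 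So the range of the limit of the "largest" singular directions is a common invariant subspace for $A$. If the condition numbers genuinely blow up, this subspace is proper and nontrivial, contradicting irreducibility. Hence $\|S_k^{-1}\|$ is bounded, which completes the proof.
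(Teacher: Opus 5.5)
The paper gives no proof of this statement; it is quoted from Shalit--Shamovich, so your argument has to stand on its own. Three parts of it are correct. Similarity invariance is fine. The forward direction of (1) is fine: the $m$-fold matrix product $Q(X)\odot\cdots\odot Q(X)$ in $M_n(\cE\otimes_h\cdots\otimes_h\cE)$ is exactly $\sum_{|w|=m}X^w\otimes Q_{w_1}\otimes_h\cdots\otimes_h Q_{w_m}$. Your compactness argument for (2) also works. With $\|S_k\|=1$ and each singular value $s_i^{(k)}$ convergent, the set $J=\{i:\lim_k s_i^{(k)}>0\}$ contains $1$ but not $n$ when $\|S_k^{-1}\|\to\infty$. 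Boundedness of $D_k^{-1}U_k^*AU_kD_k$ then kills the $(J^c,J)$ block of $\lim_k U_k^*AU_k$, which produces a proper invariant subspace.

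The converse of (1), however, is set up on the wrong side of the duality. Take the tensor algebra of $\cE$ with $Z_j\leftrightarrow Q_j$. The degree-$m$ part of $\pi$ is then a map $\cE^{\otimes_h m}\to M_n$. Its cb-norm is the norm of the corresponding element of $M_n\big((\cE^{\otimes_h m})^*\big)$, namely $r^{-m}\sum_{|w|=m}A^w\otimes\check Q_{w_1}\otimes_h\cdots\otimes_h\check Q_{w_m}$, where $\check Q_j$ is the dual basis of $\cE^*$. This is controlled by $\rho_{Q^\circ}(A)$, not by $\rho_Q(A)$. Dually, complete contractivity of $Q_j\mapsto S^{-1}A_jS/r$ on $\cE$ says $S^{-1}AS/r\in\overline{\bD_Q^\circ}$, not $\|Q(S^{-1}AS)\|\le r$.

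These quantities genuinely differ. Take $\bD_Q=\fD_2$ (so $\cE=\ell^\infty_2$), the scalar point $a=(\tfrac12,\tfrac12)$, and $r=\tfrac34$. Then $\rho_Q(a)=\tfrac12<r$. Yet the character $Z\mapsto a/r$ is unbounded on the tensor algebra of $\ell^\infty_2$: there $\|(Z_1+Z_2)^m\|\le 1$, while $\pi\big((Z_1+Z_2)^m\big)=(4/3)^m$. So the step ``by the definition of $\rho_Q$'' fails as written.

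The repair is to run your argument on the tensor algebra of $\cE^*$. That is, use free polynomials in the dual basis with matrix norms $\sup\{\|p(T)\|:\|Q(T)\|\le 1\}$; this is the right choice because $CB(\cE^*,\sB(\cK))\cong\sB(\cK)\otimes_{\min}\cE$. Its degree-$m$ part is $(\cE^*)^{\otimes_h m}\cong(\cE^{\otimes_h m})^*$, by self-duality of the Haagerup tensor product in finite dimensions. Hence $\|\pi_m\|_{cb}=r^{-m}\|\sum_{|w|=m}A^w\otimes Q_{w_1}\otimes_h\cdots\otimes_h Q_{w_m}\|\le C(\rho'/r)^m$ for any $\rho'\in(\rho_Q(A),r)$. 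Note the bound is not $C(\rho_Q(A)/r)^m$ as you wrote. Complete contractivity of $S^{-1}\pi(\cdot)S$ on the degree-one part is then exactly $\|Q(S^{-1}AS)\|\le r$.

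You should also justify $\|\pi\|_{cb}\le\sum_m\|\pi_m\|_{cb}$. The homogeneous projections are complete contractions, obtained by averaging the gauge automorphisms $p(Z)\mapsto p(e^{i\theta}Z)$. With these changes, the Paulsen-similarity route goes through.
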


The spectral radius was also shown to be intimately connected to \emph{linear pencils}. Here, for a given $A \in M_n^d$ as above, we define the (monic) linear pencil $L_A$ as the affine map
\begin{equation*}
    L_A(X) := I - \sum_{j = 1}^d A_j \otimes X_j \foral X \in \bM^d,
\end{equation*}
and its \emph{domain of invertibility} is the NC set
\begin{equation*}
    \dom (L_A^{-1}) := \left\{ X \in \bM^d \ : \ \det \left(I - \sum_{j = 1}^d A_j \otimes X_j \right) \neq 0 \right\}.
\end{equation*}

\begin{theorem}[Theorem 3.4, \cite{Shalit-Shamovich-spec-rad}]\label{thm:spec.rad.dom.pencil-SS}
    Let $A \in M_n^d$ for some $ n \in \bN$, and consider an NC operator ball $\bD_Q$ along with its polar dual $\bD_Q^\circ$. Then, we have
    \begin{equation*}
        \rho_{Q^\circ}(A) \leq \frac{1}{r} \Longleftrightarrow r \bD_Q \subset \dom (L_A^{-1}).
    \end{equation*}
    In particular, $\rho_{Q^\circ}(A)<1$ if and only if $L_A^{-1}$ extends uniformly across the boundary of $\bD_Q$.
\end{theorem}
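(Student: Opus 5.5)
We first reduce to $r = 1$ by homogeneity. The $Q^\circ$-spectral radius satisfies $\rho_{Q^\circ}(rA) = r\,\rho_{Q^\circ}(A)$, and $L_A(rX) = L_{rA}(X)$. Hence $r\bD_Q \subset \dom(L_A^{-1})$ if and only if $\bD_Q \subset \dom(L_{rA}^{-1})$. It therefore suffices to prove
\begin{equation*}
    \rho_{Q^\circ}(A) \leq 1 \Longleftrightarrow \bD_Q \subset \dom(L_A^{-1}).
\end{equation*}
The ``in particular'' clause then follows: $\rho_{Q^\circ}(A)<1$ means $\rho_{Q^\circ}(A) \leq 1/s$ for some $s>1$, i.e.\ $s\bD_Q \subset \dom(L_A^{-1})$.

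\textbf{Forward direction.} By Burnside's theorem, after a similarity $A$ is block upper triangular with irreducible diagonal blocks $B^{(1)},\dots,B^{(m)}$. The pencil $L_A(X)$ is then block upper triangular up to a similarity of the form $S\otimes I$, so $L_A(X)$ is invertible iff each $L_{B^{(i)}}(X)$ is. I would check from Definition \ref{def:NC.spec.rad}, using a triangular similarity that scales the off-diagonal blocks down, that $\rho_{Q^\circ}(B^{(i)}) \leq \rho_{Q^\circ}(A) \leq 1$. Fix a block $B = B^{(i)}$. If $\rho_{Q^\circ}(B)<1$, Theorem \ref{thm:spec.rad.properties-SS}(1) gives $S^{-1}BS \in \bD_Q^\circ$. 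If $\rho_{Q^\circ}(B)=1$, part (2) gives $S^{-1}BS \in \overline{\bD_Q^\circ}$, using irreducibility. In either case, for $X \in \bD_Q$ the bound \eqref{eqn:polar.dual.norm.prop}, or its closed version \eqref{eqn:polar.dual.boundary.condition} applied to $X/\|Q(X)\|$, gives $\|\sum_j (S^{-1}B_jS)\otimes X_j\| \leq \|Q(X)\| < 1$. A Neumann series then inverts $L_{S^{-1}BS}(X)$, and hence $L_B(X)$. Thus $\bD_Q \subset \dom(L_A^{-1})$.

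\textbf{Converse.} Assume $\bD_Q \subset \dom(L_A^{-1})$. For $|t|$ small and $X\in\bD_Q$,
\begin{equation*}
    L_A(tX)^{-1} = \sum_{n\ge 0} t^n \sum_{|w|=n} A^w\otimes X^w .
\end{equation*}
The key identification is that the supremum over all $X \in \bD_Q$ of $\|\sum_{|w|=n} A^w \otimes X^w\|$ equals the norm of $\sum_{|w|=n} A^w\otimes Q^\circ_{w_1}\otimes_h\cdots\otimes_h Q^\circ_{w_n}$ in $M_n\otimes_{\min}(\cE^*)^{\otimes_h n}$. This should follow from the universal property of the Haagerup tensor product, namely that products of matrix contractions realize the Haagerup norm, together with $\bD_Q^\circ \leftrightarrow \bB_{\cE^*}$. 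By the root test, $\rho_{Q^\circ}(A)\le 1$ will follow once we know
\begin{equation*}
    \sup_{X\in s\bD_Q}\|L_A(X)^{-1}\|<\infty \quad \text{for every } s<1,
\end{equation*}
which makes the Taylor coefficients, the homogeneous parts above, decay like $s^{-n}$ by a Cauchy estimate in $t$.

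\textbf{Main obstacle.} The hard step is this uniform boundedness across all matrix levels: pointwise invertibility on $\bD_Q$ does not obviously control the norms uniformly in the level. My first attempt is a contrapositive. Suppose $\rho_{Q^\circ}(A)>1$. Pick an irreducible block $B$ with $\rho_{Q^\circ}(B)=\rho>1$. Then $B/\rho$ is similar to some $Y\in\overline{\bD_Q^\circ}\setminus \bD_Q^\circ$. I would use the matrix-convex bipolar duality between $\bD_Q$ and $\bD_Q^\circ$ at a finite level to produce $X\in\overline{\bD_Q}$ with $1 \in \sigma\big(\sum_j Y_j\otimes X_j\big)$, not merely norm one. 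For instance, take a norming pair of vectors and compress to a finite level, then apply a unitary rotation so the peak becomes an eigenvalue. Then $X/\rho\in\bD_Q$ makes $L_B$, and hence $L_A$, singular, contradicting the hypothesis.
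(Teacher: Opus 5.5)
The paper only quotes this result from \cite{Shalit-Shamovich-spec-rad} and gives no proof, so your argument has to stand on its own. The reduction to $r=1$, the ``in particular'' clause and the forward direction are fine. The inequality $\rho_{Q^\circ}(B^{(i)})\le\rho_{Q^\circ}(A)$ needs no similarity: $(B^{(i)})^w$ is the $(i,i)$ block of $A^w$, and compressing by $P_i\otimes I$ is contractive.

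\textbf{The gap is in the converse.} Your Cauchy-estimate reduction is correct, with one detail: the universal property of $\otimes_h$ gives a supremum over tuples $X^{(1)},\dots,X^{(\ell)}$, and you need the block shift $\sum_i E_{i,i+1}\otimes X^{(i)}\in\overline{\bD_Q}$ to reduce it to single points. But that reduction only moves the difficulty into the uniform bound, and the contrapositive you offer in its place does not work. Replacing $X$ by $XU$ turns $Y\otimes X$ into $(Y\otimes X)(I\otimes U)$. The operator $I\otimes U$ carries $\eta=\sum_i e_i\otimes\eta_i$ to $\xi=\sum_i e_i\otimes\xi_i$ only if the Gram matrices $[\langle\eta_i,\eta_k\rangle]$ and $[\langle\xi_i,\xi_k\rangle]$ coincide, and a norming pair need not satisfy this. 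More seriously, the mechanism you describe never uses irreducibility or $\rho_{Q^\circ}(Y)=1$, and without them the claim is false. For $d=1$, $Q=1$, the matrix $Y=E_{12}$ has $\|Y\|_{Q^\circ}=1$, yet $Y\otimes X$ is nilpotent for every $X$. Norm one is a one-step property; an eigenvalue of modulus one is a property of long products.

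\textbf{What is actually needed.} Write $A\otimes X=\sum_j A_j\otimes X_j$. You need the formula $\sup_{X\in\bD_Q}\rho(A\otimes X)=\rho_{Q^\circ}(A)$. It is equivalent to the theorem, because $\bD_Q\subset\dom(L_A^{-1})$ iff $\rho(A\otimes X)\le\|Q(X)\|$ for all $X$. There are two ways to get it.

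One option is to cite the boundedness result \cite[Corollary 3.2]{Shalit-Shamovich-spec-rad}, which the paper uses in Sections \ref{sec:cyclicity}--\ref{sec:cyclicity.rational}. It is exactly your missing uniform bound, provided it is not itself derived from this theorem.

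The other option is a direct argument in three steps.
(i) In a product $(A\otimes X^{(1)})\cdots(A\otimes X^{(\ell)})\zeta$, each intermediate vector in $\bC^n\otimes\bC^N$ has only $n$ components. Compress each $X^{(i)}$ between consecutive component spans, using $\|Q(V^*XW)\|\le\|Q(X)\|$ for contractions $V,W$. This shows these product norms are already maximized on $\overline{\bD_Q}(n)$. Together with your Haagerup identification, the joint spectral radius of the compact set $\Sigma=\{A\otimes X: X\in\overline{\bD_Q}(n)\}$ equals $\rho_{Q^\circ}(A)$.
(ii) The Berger--Wang theorem gives products $M_1\cdots M_\ell$ from $\Sigma$ with $\rho(M_1\cdots M_\ell)^{1/\ell}$ arbitrarily close to $\rho_{Q^\circ}(A)$.
(iii) The cyclic block shift $\tilde X=\sum_{i=1}^{\ell}E_{i,i+1}\otimes X^{(i)}$ (indices mod $\ell$) lies in $\overline{\bD_Q}$ and satisfies $\rho(A\otimes\tilde X)^\ell=\rho(M_1\cdots M_\ell)$.

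So if $\rho_{Q^\circ}(A)>1$, some eigenvalue $\mu$ of $A\otimes\tilde X$ has $|\mu|>1$, and $L_A$ is singular at $\tilde X/\mu\in\bD_Q$. The trivial half of the formula, $\rho(A\otimes X)\le\rho_{Q^\circ}(A)\|Q(X)\|$, also gives the forward direction without Burnside or Theorem \ref{thm:spec.rad.properties-SS}.
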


We say that a linear pencil $L_A$ is \emph{irreducible} if the tuple $A$ is irreducible. In essence, irreducible linear pencils play a role that is similar to irreducible polynomials in the classical case. We explore this in the next subsection.

\subsection{Stable associativity and linearization}\label{subsec:stab.lin}

Recall from the introduction that the ring $M_k(\bC \langle Z \rangle)$ consists of all $k \times k$ matrices with entries in $\bC \langle Z \rangle$ for each $k \in \bN$. Also recall that $GL_k(\bC \langle Z \rangle)$ denotes the ring of all invertible matrix free polynomials $F$ such that $F^{-1} \in M_k(\bC \langle Z \rangle)$. We say that $P \in M_k(\bC \langle Z \rangle)$ is \emph{stable} with respect to $\Omega \subseteq \bM^d$ if $\det P(X) \neq 0$ for all $X \in \Omega$. If $\Omega = \bD_Q$ for some $Q$ as before, then we say that $P$ is \emph{$Q$-stable}.

\begin{definition}\label{def:stable.assoc}
    Given any two $P_j \in M_{k_j}(\bC \langle Z \rangle)$, we say that $P_1$ and $P_2$ are \emph{stably associated} if there exist $l_j \in \bN$ with $k_1 + l_1 = k_2 + l_2$, and $F, G \in GL_{k_1 + l_1}(\bC \langle Z \rangle)$ so that
    \begin{equation*}
        \bmat{P_2 & 0 \\ 0 & I_{l_2}} = F \bmat{P_1 & 0 \\ 0 & I_{l_1}} G.
    \end{equation*}
    In this case, we write $P_1 \sim P_2$ to denote stable associativity and note that `$\sim$' is an equivalence relation for non-zero divisor (square) matrices over $\bC \langle Z \rangle$. 
\end{definition}

Fix an NC operator ball $\bD_Q$ and suppose $P \in M_k(\bC \langle Z \rangle)$ is $Q$-stable. Thus, in particular, $P$ is not a zero-divisor in $M_k(\bC \langle Z \rangle)$. A result of Cohn (see \cite[Proposition 3.2.9]{Cohn-fir-local-book}) then guarantees that $P$ admits a factorization $P = P_1 \dots P_l$, where each $P_j$ is an \emph{atom} in $M_k(\bC \langle Z \rangle)$, i.e., it cannot be viewed as the product of two non-invertible elements $F, G \in M_k(\bC \langle Z \rangle)$. Moreover, each $P_j$ is uniquely determined up to stable associativity and it is also $Q$-stable. For the purpose of determining the cyclicity of a $Q$-stable matrix of free polynomials $P \in M_k(\bC \langle Z \rangle)$, it will be sufficient to work with the equivalence class of each of its atomic factors $P_j$, and so we assume, without loss of generality, that $P_j(0) = I$ for each $j$. The only fact we need about atoms is the following result from \cite{Helton-Klep-Volcic-Free-factor}.

\begin{lemma}[Lemma 4.2, \cite{Helton-Klep-Volcic-Free-factor}]\label{lem:Higman's.linearization.trick}
    If $P \in M_k(\bC \langle Z \rangle)$ is such that $P(0) = I$ then $P \sim L_A$ for some linear pencil $L_A$. Moreover, $P$ is an atom if and only if $P \sim L_B$ for some irreducible $L_B$.
\end{lemma}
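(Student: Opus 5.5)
The plan is to prove the two assertions of Lemma~\ref{lem:Higman's.linearization.trick} separately. The first comes from Higman's linearization trick. The second combines a triangularization argument with an irreducibility statement for determinants of irreducible pencils.

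\textbf{Step 1 (linearization).} The basic move is the identity
\begin{equation*}
    \bmat{I & G \\ 0 & I}\bmat{F & -G \\ H & I}\bmat{I & 0 \\ -H & I} = \bmat{F + GH & 0 \\ 0 & I},
\end{equation*}
valid for matrices of free polynomials of compatible sizes. The outer factors lie in $GL(\bC\langle Z\rangle)$, since their inverses are obtained by negating the off-diagonal block. So $F + GH \sim \bsmallmat{F & -G \\ H & I}$.

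Write every entry of $P$ as a sum of monomials. For a monomial of degree at least $2$, write it as $Z_i\cdot w'$, so that $G$ is a column/row of degree-one entries and $H$ has strictly smaller degree. Iterating this move strictly decreases the maximal degree occurring in the enlarged matrix. After finitely many steps we get $P \sim A_0 - \sum_j A_j Z_j$.

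Evaluating the stable associativity relation at $0$ gives
\begin{equation*}
    A_0 = F(0)\,\diag\big(P(0), I\big)\,G(0) = F(0)\,G(0).
\end{equation*}
Here $F(0)$ and $G(0)$ are invertible because $F, G \in GL(\bC\langle Z\rangle)$, and $P(0) = I$ by hypothesis, so $A_0$ is invertible. Multiplying on the left by $A_0^{-1}$ is again a stable associativity, and yields $P \sim L_A$ with $A = A_0^{-1}(A_1, \dots, A_d)$.

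\textbf{Step 2 (atom $\Rightarrow$ irreducible pencil).} Among all pencils stably associated to $P$, choose $L_A$ of minimal size $n$. Suppose $A$ is reducible. After a constant similarity (again a stable associativity) we may assume $A = \bsmallmat{A' & A'' \\ 0 & A'''}$ is block upper triangular. Then $L_A$ factors as
\begin{equation*}
    L_A = \bmat{I & 0 \\ 0 & L_{A'''}}\bmat{I & -\sum_j A''_j Z_j \\ 0 & I}\bmat{L_{A'} & 0 \\ 0 & I}.
\end{equation*}
If, say, $A'$ were jointly nilpotent, then $L_{A'}$ would be invertible over $\bC\langle Z\rangle$. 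In that case $L_A$ would be stably associated to $L_{A'''}$, which is a smaller pencil, contradicting minimality. So both outer factors are non-invertible, and $L_A$ is not an atom.

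It remains to transport this back to $P$. Cohn's theory of semifirs (as in \cite{Cohn-fir-local-book}) says that the factorization lattices of stably associated full matrices are isomorphic. Hence being an atom is invariant under $\sim$, so $P$ is not an atom either. Contrapositively, if $P$ is an atom, then the minimal pencil $L_A$ is irreducible.

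\textbf{Step 3 (irreducible pencil $\Rightarrow$ atom).} Suppose $P \sim L_B$ with $B$ irreducible, and suppose $P = FG$ with $F, G \notin GL_k(\bC\langle Z\rangle)$. Both factors are full matrices, since their product is. I would then use two facts.
\begin{enumerate}
    \item Stable associativity preserves determinants up to constant factors: $\det P(X) = c_n \det L_B(X)$ for $X \in M_n^d$. This holds because $\det F(X)$ is constant in $X$ whenever $F \in GL(\bC\langle Z\rangle)$.
    \item A full non-invertible matrix has $\det F(X)$ non-constant for $n$ large. This follows by linearizing $F$ via Step 1 to a pencil that is not jointly nilpotent.
\end{enumerate}
Then $\det L_B(X) = c\,\det F(X)\det G(X)$ is a nontrivial factorization of a polynomial in the entries of a generic $X \in M_n^d$.

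The key step is to contradict this using the known fact that, for an irreducible tuple $B$, the polynomial $\det\big(I - \sum_j B_j \otimes X_j\big)$ in the entries of $X$ is irreducible for all sufficiently large $n$. I expect this to be the main obstacle. I would invoke it via invariant theory: the generic pencil's determinant factors according to the semisimplification of $B$, together with Burnside's theorem. A safer alternative is to cite \cite{Helton-Klep-Volcic-Free-factor} for exactly this statement.
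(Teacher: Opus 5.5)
Your proposal is correct and follows essentially the argument that the paper attributes to Helton--Klep--Vol\v{c}i\v{c} in Remark~\ref{rem:Higman.linearization.subtlety}: Higman's trick for the linearization, then a Burnside-type block triangularization of a reducible pencil into a product of non-units, transported back to $P$ by Cohn's invariance of atoms under $\sim$ (choosing a pencil of minimal size is just a tidy way to avoid the full decomposition into irreducible diagonal blocks). For the converse you do need the cited theorem that $\det L_B(X)$ is irreducible for large $n$ when $B$ is irreducible and $L_B$ is not the trivial pencil $1$; your remark about semisimplification would not prove this on its own, so citing it rather than sketching it is the right call.
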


The proof of this lemma relies on a clever application of \emph{Higman's linearization trick} (see \cite[Theorem 15]{Higman-trick}), which is the following identity for any three matrices $Y_0, Y_1, Y_2$ of compatible sizes:
\begin{equation}\label{eqn:Higman.id}
    \bmat{I & -Y_1 \\ 0 & I} \bmat{Y_0 - Y_1 Y_2 & 0 \\ 0 & I} \bmat{I & 0 \\ -Y_2 & I} = \bmat{Y_0 & -Y_1 \\ -Y_2 & I}.
\end{equation}
Let us illustrate how to achieve linearization with a simple example.

\begin{example}\label{eg:Higman.linearization}
    Let $P(Z,W) = I - \frac{ZW}{2} - \frac{WZ}{2} \in \bC \langle Z, W \rangle$. First, we take $Y_0 = I - \frac{ZW}{2}$, $Y_1 = \frac{W}{\sqrt{2}}$, $Y_2 = \frac{Z}{\sqrt{2}}$ and plug them into \eqref{eqn:Higman.id} to obtain
    \begin{equation*}
        \bmat{I & \frac{-W}{\sqrt{2}} \\ 0 & I} \bmat{P(Z,W) & 0 \\ 0 & I} \bmat{I & 0 \\ \frac{-Z}{\sqrt{2}} & I} = \bmat{I - \frac{ZW}{2} & \frac{-W}{\sqrt{2}} \\ \frac{-Z}{\sqrt{2}} & I}.
    \end{equation*}
    Then, we take $Y_0 = \bsmallmat{I & \frac{-W}{\sqrt{2}} \\ \frac{-Z}{\sqrt{2}} & I}$, $Y_1 = \bsmallmat{\frac{Z}{\sqrt{2}} & 0 \\ 0 & 0}$, $Y_2 = \bsmallmat{\frac{W}{\sqrt{2}} & 0 \\ 0 & 0}$ and plug them into \eqref{eqn:Higman.id} to get
    \begin{equation*}
        \bmat{I & 0 & \frac{-Z}{\sqrt{2}} & 0 \\ 0 & I & 0 & 0 \\ 0 & 0 & I & 0 \\ 0 & 0 & 0 & I} \bmat{I - \frac{ZW}{2} & \frac{-W}{\sqrt{2}} & 0 & 0 \\ \frac{-Z}{\sqrt{2}} & I & 0 & 0 \\ 0 & 0 & I & 0 \\ 0 & 0 & 0 & I} \bmat{I & 0 & 0 & 0 \\ 0 & I & 0 & 0 \\ \frac{-W}{\sqrt{2}} & 0 & I & 0 \\ 0 & 0 & 0 & I} = \bmat{I & -\frac{W}{\sqrt{2}} & -\frac{Z}{\sqrt{2}} & 0 \\ -\frac{Z}{\sqrt{2}} & I & 0 & 0 \\ -\frac{W}{\sqrt{2}} & 0 & I & 0 \\ 0 & 0 & 0 & I}.
    \end{equation*}
    Note that the last row and column are redundant, so further simplifications can be made. It is then straightforward to combine the above two identities and complete the linearization process:
    \begin{equation}\label{eqn:eg.linearztn}
        \bmat{P(Z,W) & 0 \\ 0 & I_2} = \underbrace{\bmat{I & \frac{W}{\sqrt{2}} & \frac{Z}{\sqrt{2}} \\ 0 & I & 0 \\ 0 & 0 & I}}_{=: \ F} \underbrace{\bmat{I & \frac{-W}{\sqrt{2}} & \frac{-Z}{\sqrt{2}} \\ \frac{-Z}{\sqrt{2}} & I & 0 \\ \frac{-W}{\sqrt{2}} & 0 & I}}_{=: \ L_A} \underbrace{\bmat{I & 0 & 0 \\ \frac{Z}{\sqrt{2}} & I & 0 \\ \frac{W}{\sqrt{2}} & 0 & I}}_{=: \ G}.
    \end{equation}

    Lastly, note that $L_A$ corresponds to
    \begin{equation*}
        A = \left(A_Z = \bmat{0 & 0 & \frac{1}{\sqrt{2}} \\ \frac{1}{\sqrt{2}} & 0 & 0 \\ 0 & 0 & 0}, A_W = \bmat{0 & \frac{1}{\sqrt{2}} & 0 \\ 0 & 0 & 0 \\ \frac{1}{\sqrt{2}} & 0 & 0}\right) \in M_3^2,
    \end{equation*}
    which can be readily checked to be irreducible since
    \begin{align*}
        2 \sqrt{2} A_Z A_W^2 = E_{12}; \ 2 \sqrt{2} A_Z^2 A_W = E_{21}; \ 2 A_Z^2 = E_{23}; \ 2 A_W^2 = E_{32}.
    \end{align*}
    All the remaining $E_{ij}$'s can be obtained using the ones above. Thus, $P$ is an atom.
\end{example}

\begin{remark}\label{rem:Higman.linearization.subtlety}
    In the above example, $F, G \in GL_k(\bC \langle Z \rangle)$ have the form $F = I + J_F$ and $G = I + J_G$, where $J_F$ and $J_G$ are strictly upper and lower triangular, respectively. Consequently, $J_F$ and $J_G$ are nilpotent, so it is easy to compute $F^{-1}$ and $G^{-1}$ in this case, which is necessary for the analysis in the next section. This is merely an application of \eqref{eqn:Higman.id} at each step, however, it is important to remark that the resulting linearization for an atom need not be an irreducible pencil $L_A$ in general. It just happened to be the case for $P(Z,W) = 1 -\frac{ZW}{2} - \frac{WZ}{2}$ that we obtained an irreducible pencil straight away.

    As explained in the proof of \cite[Lemma 4.2]{Helton-Klep-Volcic-Free-factor}, in general, one must use Burnside's Theorem \cite[Corollary 5.23]{Bresar-intro-NC-alg} and show that $L_A$ is similar to a linear pencil
    \begin{equation*}
        L_B(Z) := \bmat{L_1(Z) & * & \dots & * \\ 0 & L_2(Z) & \dots & * \\ \vdots & 0 & \ddots & * \\ 0 & 0 & \dots & L_l(Z)}
    \end{equation*}
    for some $l \in \bN$, where each linear pencil $L_j$ is either $I$ or irreducible, and then show that $L_j = I$ for all $j$ except for exactly one $1 \leq j_0 \leq l$. Thus, in the process of showing that a given atom $P$ is stably associated to $L_{j_0}$, one loses the \emph{uni-triangular} structure of $F$ and $G$. It will nevertheless be of interest in the next section to consider the special case where one can obtain such a linearization simply via Higman's trick \eqref{eqn:Higman.id}.
\end{remark}

\begin{definition}\label{def:uni-tri.matrix.polys}
    For each $k \in \bN$, we define $\bU_k(\bC \langle Z \rangle)$ as the collection of all \emph{upper uni-triangular matrix free polynomials}, i.e, $F \in \bU_k(\bC \langle Z \rangle)$ if and only if $F = I + J_F$ for some strictly upper triangular matrix $J_F$. Similarly, define $\bL_k(\bC \langle Z \rangle)$ to be the collection of all \emph{lower uni-triangular matrix free polynomials}.
    
    Given two matrix free polynomials $P_j \in M_{k_j}(\bC \langle Z \rangle)$, we write $P_1\sim_{\bU\bL} P_2$ if there exist $l_j \in \bN$ with $k_1 + l_1 = k_2 + l_2$, and $F\in \bU_{k_1 + l_1}(\bC \langle Z \rangle)$ and $G\in \bL_{k_1 + l_1}(\bC \langle Z \rangle)$ so that
    \begin{equation*}
        \bmat{P_2 & 0 \\ 0 & I_{l_2}} = F \bmat{P_1 & 0 \\ 0 & I_{l_1}} G.
    \end{equation*}
\end{definition}

\section{A free Neuwirth--Ginsberg--Newman-type inequality}\label{sec:Free.NGN.inequality}

Fix a linear operator-valued polynomial $Q$, and let $\bD_Q$ be the corresponding NC operator ball along with its polar dual $\bD_Q^\circ$. For any $F \in H^\infty(\bD_Q)$ and $r < 1$, define
\begin{equation*}
    F^{(r)}(X) := F(rX) \foral X \in \bD_Q.
\end{equation*}
Clearly, $F^{(r)} \in H^\infty(\bD_Q)$ with $\|F^{(r)}\|_Q \leq \|F\|_Q$ for each $r < 1$. As noted in \cite[Proposition 3.5]{Sampat-Shalit-iso-prob}, $F$ exhibits a \emph{homogeneous expansion} $F = \sum_{j \geq 0} F_j$, where each $F_j \in \bC \langle Z \rangle$ satisfies
\begin{equation*}
    F_j(r X) = r^j F_j(X) \foral X \in \bD_Q, \ r < 1.
\end{equation*}
Moreover, it was noted that the map $F \mapsto F_j$ is completely contractive for each $j \geq 0$. An immediate consequence of this observation is the following basic property of matrix free polynomials. Throughout this section, we shall view the ring of matrix free polynomials $M_k(\bC \langle Z \rangle)$ as a subset of $M_k(H^\infty(\bD_Q))$.

\begin{lemma}\label{lem:NGN.polynomials}
    Fix $k \in \bN$ and let $P \in M_k(\bC \langle Z \rangle)$ be given by the homogeneous expansion $P = \sum_{j = 0}^N P_j$ for some $N \geq 0$. Then, we have
    \begin{equation*}
        \sup_{r < 1} \frac{\| P - P^{(r)} \|_Q}{1-r} \leq \sum_{j=1}^N j \|P_j\|_Q.
    \end{equation*}
\end{lemma}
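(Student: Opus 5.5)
The plan is to use the homogeneous expansion directly. Since $P_j(rX) = r^j P_j(X)$ for every $X \in \bD_Q$ and $r<1$, we have $P^{(r)} = \sum_{j=0}^N r^j P_j$. Hence
\begin{equation*}
    P - P^{(r)} = \sum_{j=1}^N (1 - r^j) P_j,
\end{equation*}
because the $j = 0$ term cancels. Here each $P_j \in M_k(\bC\langle Z\rangle)$ is the entrywise degree-$j$ homogeneous part. The identity holds pointwise at every $X \in \bD_Q$, so it holds in $M_k(H^\infty(\bD_Q))$.

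Next I would take the matrix norm $\|\cdot\|_Q = \|\cdot\|_Q^{(k)}$ and apply the triangle inequality:
\begin{equation*}
    \|P - P^{(r)}\|_Q \leq \sum_{j=1}^N (1-r^j)\, \|P_j\|_Q .
\end{equation*}
Then I use the elementary factorization $1 - r^j = (1-r)(1 + r + \dots + r^{j-1})$. For $0 \le r < 1$ the second factor is at most $j$, so $1 - r^j \leq j(1-r)$.

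Dividing by $1 - r > 0$ gives $\|P - P^{(r)}\|_Q/(1-r) \leq \sum_{j=1}^N j\|P_j\|_Q$ for every such $r$. Taking the supremum over $r$ yields the claim. For negative $r$, if these are admitted, note that $|1-r^j| \le j|1-r|$ fails in general, so I would take $r \in [0,1)$, which is implicit in the setup.

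I expect no real obstacle. The one point to check is that the homogeneous components of a matrix free polynomial are again matrix free polynomials with finite $\|\cdot\|_Q$ norm. This is immediate, since each $P_j$ is a polynomial and $\bD_Q$ is bounded. The complete contractivity of $F \mapsto F_j$ is not even needed here, though it would let one further bound $\|P_j\|_Q \le \|P\|_Q$ and obtain the cruder estimate $\tfrac{N(N+1)}{2}\|P\|_Q$.
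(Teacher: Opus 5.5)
Your proof is correct and is essentially the paper's own argument: expand $P - P^{(r)} = \sum_{j=1}^N (1-r^j)P_j$, apply the triangle inequality, and bound $(1-r^j)/(1-r) = 1 + r + \dots + r^{j-1} \le j$. One small correction to your aside: $|1 + r + \dots + r^{j-1}| \le j$ in fact holds for all $|r| \le 1$, so only $r \le -1$ would need to be excluded; in any case the intended range is $0 < r < 1$.
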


\begin{remark}\label{rem:NGN.polynomials}
     As noted before the statement of the lemma, we know from \cite[Proposition 3.5]{Sampat-Shalit-iso-prob} that $\|P_j\|_Q \leq \|P\|_Q$ for each $0 \leq j \leq N$. Hence,  
     \begin{equation*}
         \sum_{j=1}^N j \|P_j\|_Q\leq \frac{N (N+1)}{2} \|P\|_Q.
     \end{equation*}
     While the RHS above is nicer to keep track of, in practice, the LHS bound is much tighter.
\end{remark}

\begin{proof}
    A straightforward calculation yields
    \begin{equation*}
        \frac{P(Z) - P(rZ)}{1-r}=\sum_{j = 1}^N \frac{1-r^j}{1-r} P_j(Z).
    \end{equation*}
    Therefore, we easily compute
    \begin{equation*}
        \sup_{r < 1} \frac{\|P - P^{(r)} \|_Q}{1-r}\leq \sum_{j=1}^N \sup_{r < 1}\frac{1 - r^j}{1 - r} \|P_j\|_Q \leq \sum_{j=1}^N j \|P_j\|_Q. \qedhere
    \end{equation*}
\end{proof}

We use the previous lemma to obtain a generalization of the NGN inequality for certain matrix free polynomials on $\bD_Q$.

\begin{lemma}\label{lem:NGN.linear.pencil}
    Let $P = F L_A \in M_k(\bC \langle Z \rangle)$ for some $A \in \overline{\bD^\circ_Q}(k)$ and $F\in GL_k(\bC \langle Z \rangle)$ with homogeneous expansion $F = \sum_{j = 0}^N F_j$. Then, 
    \begin{align}
        \sup_{r < 1} \big\| \big(P^{(r)} \big)^{-1} P \big\|_Q &\leq 1 + \big[ (N^2 + N + 1) \|F^{-1}\|_Q \|F\|_Q \big], \label{eqn:NGN.linear.pencil.left.inv}\\
        \sup_{r < 1} \big\| P \big( P^{(r)} \big)^{-1} \big\|_Q &\leq 2 \|F^{-1}\|_Q \|F\|_Q. \label{eqn:NGN.linear.pencil.right.inv}
    \end{align}

    In particular, if $P = L_A$ (i.e., $F = I$), then both constants above become $2$.
\end{lemma}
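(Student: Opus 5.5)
The plan is to reduce everything to the elementary identity $L_A(X) = L_A(rX) - (1-r)\Lambda_A(X)$, where $\Lambda_A(X) := \sum_j A_j \otimes X_j$. First I will treat the case $F = I$. For $X \in \bD_Q$ and $A \in \overline{\bD_Q^\circ}(k)$, \eqref{eqn:polar.dual.boundary.condition} gives $\|\Lambda_A(X)\| \leq 1$. Applying the same bound to $rX \in \bD_Q$ gives $\|\Lambda_A(rX)\| = r\|\Lambda_A(X)\| \leq r < 1$. A Neumann series then shows that $L_A(rX) = I - r\Lambda_A(X)$ is invertible with $\|L_A(rX)^{-1}\| \leq (1-r)^{-1}$; in particular $P^{(r)}(X)$ is invertible, so the quantities in the statement make sense. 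From the identity,
\begin{equation*}
L_A(rX)^{-1} L_A(X) = I - (1-r) L_A(rX)^{-1}\Lambda_A(X),
\end{equation*}
and similarly on the right. Hence both norms are at most $1 + (1-r)\cdot\frac{1}{1-r}\cdot 1 = 2$, uniformly in $r$ and $X$. This proves the case $F = I$ and gives the two auxiliary bounds $\|(L_A^{(r)})^{-1}\|_Q \leq (1-r)^{-1}$ and $\|L_A\|_Q \leq 2$.

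For general $F \in GL_k(\bC\langle Z\rangle)$, I will use that $F^{-1}$ is a matrix polynomial. Consequently $(F^{(r)})^{-1} = (F^{-1})^{(r)}$, and $\|(F^{(r)})^{-1}\|_Q \leq \|F^{-1}\|_Q$ because $rX \in \bD_Q$. For the right bound \eqref{eqn:NGN.linear.pencil.right.inv}, write
\begin{equation*}
P\,(P^{(r)})^{-1} = F \,\big[L_A (L_A^{(r)})^{-1}\big]\, (F^{(r)})^{-1}.
\end{equation*}
Submultiplicativity together with the $F=I$ case then gives $2\|F\|_Q\|F^{-1}\|_Q$ immediately.

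The left bound \eqref{eqn:NGN.linear.pencil.left.inv} is the step needing care, since $F$ now sits between the two pencil factors. I will split $(F^{(r)})^{-1}F = I + (F^{(r)})^{-1}(F - F^{(r)})$, so that
\begin{equation*}
(P^{(r)})^{-1}P = (L_A^{(r)})^{-1}L_A + (L_A^{(r)})^{-1}(F^{(r)})^{-1}(F - F^{(r)})L_A.
\end{equation*}
The first term has norm at most $2$ by the first step. For the second term I will combine four estimates:
\begin{itemize}
\item $\|(L_A^{(r)})^{-1}\|_Q \leq (1-r)^{-1}$, from the first step;
\item $\|(F^{(r)})^{-1}\|_Q \leq \|F^{-1}\|_Q$;
\item $\|F - F^{(r)}\|_Q \leq (1-r)\sum_j j\|F_j\|_Q \leq (1-r)\frac{N(N+1)}{2}\|F\|_Q$, by Lemma \ref{lem:NGN.polynomials} and Remark \ref{rem:NGN.polynomials};
\item $\|L_A\|_Q \leq 2$.
\end{itemize}
The factors $(1-r)$ cancel, giving the bound $N(N+1)\|F^{-1}\|_Q\|F\|_Q$ for the second term. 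Finally, since $I = FF^{-1}$, we have $\|F\|_Q\|F^{-1}\|_Q \geq 1$, so $2 \leq 1 + \|F^{-1}\|_Q\|F\|_Q$. Adding the two terms yields $1 + (N^2+N+1)\|F^{-1}\|_Q\|F\|_Q$.

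I expect the only real obstacle to be this left-hand estimate. The point is that the loss $(1-r)^{-1}$ coming from $(L_A^{(r)})^{-1}$ must be compensated exactly by the factor $(1-r)$ from Lemma \ref{lem:NGN.polynomials}. This is why the decomposition is chosen so that $F - F^{(r)}$ appears adjacent to the inverse pencil.
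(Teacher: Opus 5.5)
Your proof is correct. Its skeleton matches the paper's: the Neumann-series bound $\|L_A(rX)^{-1}\|\le (1-r)^{-1}$ is cancelled by the factor $(1-r)$ from Lemma~\ref{lem:NGN.polynomials}. The right-hand bound \eqref{eqn:NGN.linear.pencil.right.inv} is obtained exactly as in the paper, by pulling $F$ and $(F^{(r)})^{-1}$ outside and using the $F=I$ estimate.

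The difference is in how the left-hand quantity is split.

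The paper writes $(P^{(r)})^{-1}P = I + (P^{(r)})^{-1}(P-P^{(r)})$ and expands $P(X)-P(rX) = F(X)-F(rX) + (rF(rX)-F(X))(A\otimes X)$. It bounds this difference by $2\|F(X)-F(rX)\| + (1-r)\|F(rX)\|$ and uses $\|P(rX)^{-1}\|\le \|F^{-1}\|_Q/(1-r)$. The term $(1-r)\|F(rX)\|$ is what produces the ``$+1$'' in $N^2+N+1$. In this route the case $F=I$ is not proved separately; it is just the specialization $N=0$.

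You instead prove the pencil case first, via the identity $L_A = L_A^{(r)} - (1-r)\Lambda_A$. You then perturb only the middle factor $(F^{(r)})^{-1}F$, keeping the pencil factors on both sides, so the pencil enters only through that estimate and through $\|L_A\|_Q\le 2$. This gives the slightly sharper constant $2 + N(N+1)\|F\|_Q\|F^{-1}\|_Q$, which beats the paper's by exactly $\|F\|_Q\|F^{-1}\|_Q-1\ge 0$. You then correctly relax it to the stated form using $\|F\|_Q\|F^{-1}\|_Q\ge \|FF^{-1}\|_Q=1$.

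What the paper's grouping buys is uniformity. The identity $I + (P^{(r)})^{-1}(P-P^{(r)})$ is the same template reused in Proposition~\ref{prop:improved.bounds}, where one only has an $O((1-r)^{-1})$ bound on $\|(P^{(r)})^{-1}\|_Q$ and applies Lemma~\ref{lem:NGN.polynomials} to $P$ directly.
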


\begin{remark}\label{rem:NGN.linear.pencil}
    As will be evident from the following proof, similar bounds can be obtained for the case $P = L_A F$. In fact, the bounds for the left-inverse and right-inverse will be swapped in this case.
\end{remark}

\begin{proof}
    Let $r < 1$ and $X \in \bD_Q$ be arbitrary and define $A\otimes X:=\sum_{j=1}^d A_j\otimes X_j$. A quick application of \eqref{eqn:polar.dual.norm.prop} yields $\| r (A \otimes X) \| < 1$, from which it follows that $L_A(rX)$ is invertible. In fact, we may express $L_A(rX)^{-1}$ as a Neumann series:
    \begin{equation*}
        L_A(rX)^{-1} = \sum_{n=0}^\infty r^n (A\otimes X)^n,
    \end{equation*}
    so that
    \begin{equation}\label{eqn:lem.NGN.linear.pencil.proof.1}
        \| L_A(rX)^{-1} \| \leq \sum_{n=0}^\infty r^n = \frac{1}{1-r}.
    \end{equation}
    
    Observe that
    \begin{equation}\label{eqn:lem.NGN.linear.pencil.proof.2}
        P(rX)^{-1} P(X) = I + P(rX)^{-1} (P(X) - P(rX)), 
    \end{equation}
    and
    \begin{equation*}
        P(X) - P(rX) = F(X)-F(rX)+ (rF(rX)-F(X)) (A \otimes X). 
    \end{equation*}
    Hence, we obtain
    \begin{align*}
        \|P(X) - P(rX)\| &\leq \|F(X)-F(rX)\|+ \| F(X)-rF(rX)\| \\
        &= \|F(X) - F(rX)\| + \|F(X) - F(rX) + (1 - r) F(rX) \| \\
        & \leq 2 \|F(X) - F(rX)\| + (1 - r) \|F(rX)\|.
    \end{align*}
    Combining this with \eqref{eqn:lem.NGN.linear.pencil.proof.1}, \eqref{eqn:lem.NGN.linear.pencil.proof.2} and the bound from Lemma \ref{lem:NGN.polynomials} (and Remark \ref{rem:NGN.polynomials}) we have
    \begin{align*}
        \|P(rX)^{-1} P(X)\| &\leq 1 + \|F(rX)^{-1}\| \left(\frac{2 \|F(X)-F(rX)\|}{1-r} + \|F(rX)\| \right) \\
        &\leq 1 + \big[ (N^2 + N + 1) \|F^{-1}\|_Q \|F\|_Q \big].
    \end{align*}
    As $r < 1$ and $X \in \bD_Q$ were arbitrarily chosen, \eqref{eqn:NGN.linear.pencil.left.inv} follows at once from the above bound.

    It remains to show that \eqref{eqn:NGN.linear.pencil.right.inv} holds. To this end, note that
    \begin{align*}
        \|P(X) P(rX)^{-1}\| &= \| F(X) L_A(X) L_A(rX)^{-1} F(rX)^{-1} \| \\
        &\leq \|L_A(X) L_A(rX)^{-1}\| \|F^{-1}\|_Q \|F\|_Q.
    \end{align*}
    As $L_A(X)$ clearly commutes with $L_A(rX)^{-1}$, we can use the bound from \eqref{eqn:NGN.linear.pencil.left.inv} for the case $F = I$ to note that
    \begin{equation*}
        \|L_A(X) L_A(rX)^{-1}\| = \|L_A(rX)^{-1} L_A(X)\| \leq 2.
    \end{equation*}
    This gives us \eqref{eqn:NGN.linear.pencil.right.inv} at once and the proof is complete.
\end{proof}

Recall from Theorem \ref{thm:spec.rad.dom.pencil-SS} that $L_A(X)^{-1}$ is well-defined for each $X \in \bD_Q$ whenever $\rho_{Q^\circ}(A) \leq 1$. The following example shows that Lemma \ref{lem:NGN.linear.pencil} need not hold for all choices of $A$ with $\rho_{Q^\circ}(A) \leq 1$.

\begin{example}\label{example:Jordan.block.pencil}
    Consider the one-variable linear pencil given by $A = \bsmallmat{1 & 1 \\ 0 & 1}$, i.e.,
    \begin{equation*}
        L_A(Z) = \bmat{I - Z & -Z \\ 0 & I - Z}.
    \end{equation*}
    Clearly, $\rho(A) = 1$ and so $L_A(Z)$ is invertible over the NC unit disk $\fD_1$ (using Theorem \ref{thm:spec.rad.dom.pencil-SS}). It is then straightforward to compute for each $r < 1$ and $X \in \fD_1$ that
    \begin{align*}
        L_A(rX)^{-1} L_A(X) &= \bmat{(I - rX)^{-1} & rX(I - rX)^{-2} \\ 0 & (I - rX)^{-1}} \bmat{ I - X & -X \\ 0 & I - X} \\
        &= \bmat{(I - rX)^{-1} (I - X) & (r-1)X(I - rX)^{-2} \\ 0 & (I - rX)^{-1} (I - X)}.
    \end{align*}
    Notice that the top-right entry in the matrix above is unbounded on the set of all $r<1$ and $X \in \fD_1$. To see this, set $X=r$ and let $r\to 1$. Therefore, \eqref{eqn:NGN.linear.pencil.left.inv} does not hold in this case and, by commutativity of $L_A(rX)^{-1}$ and $L_A(X)$, neither does \eqref{eqn:NGN.linear.pencil.right.inv}.
\end{example}

We are now sufficiently prepared to prove Theorem \ref{mainthm:NGN.Q-stable.atom}.

%\begin{theorem}\label{mainthm:NGN.Q-stable.atom}
%    If $P \in M_k(\bC \langle Z \rangle)$ is a $Q$-stable atom, then 
%     \begin{align*}
%        \sup_{r < 1} \big\| \big( P^{(r)} \big)^{-1} P \big\|_Q < \infty \qand \sup_{r < 1} \big\| P \big(P^{(r)} \big)^{-1} \big\|_Q < \infty.
%    \end{align*}
%\end{theorem}

\subsection*{\texorpdfstring{Proof of Theorem \ref{mainthm:NGN.Q-stable.atom}}{Proof of Theorem A}} First, note that
\begin{align*}
    PP(0)^{-1}(P^{(r)}P(0)^{-1})^{-1}&= P(P^{(r)})^{-1},\\
    (P(0)^{-1}P^{(r)})^{-1}P(0)^{-1}P&= (P^{(r)})^{-1}P.
\end{align*}
We therefore assume without loss of generality that $P(0)=I$, by replacing $P$ with $P P(0)^{-1}$ (resp. $P(0)^{-1}P$) if necessary.
    
From Lemma \ref{lem:Higman's.linearization.trick}, we know that $P\sim L_A$ for some irreducible linear pencil $L_A$. Moreover, using the fact that $P$ is $Q$-stable, it is readily checked that $L_A$ is $Q$-stable as well. In particular, $L_A(X)^{-1}$ exists for all $X\in \bD_Q$, from which $\rho_{Q^\circ}(A)\leq 1$ follows via Theorem \ref{thm:spec.rad.dom.pencil-SS}. Next, since $A$ is irreducible, Theorem \ref{thm:spec.rad.properties-SS} shows that $A$ is jointly similar to some $B\in \overline{\bD_Q^\circ}$. Hence, $P\sim L_B$, which means there exist $l \in \bN$ and $F, G \in GL_l(\bC \langle Z \rangle)$ such that 
\begin{equation*}
   \bmat{P(X) & 0 \\ 0 & I} = F(X) \underbrace{\bmat{L_B(X) & 0 \\ 0 & I}}_{=: \ L_{\widetilde{B}}(X)} G(X) \foral X \in \bD_Q.
\end{equation*}
Note that each matrix in the above equation is invertible. Thus,
\begin{equation*}
    \bmat{P(X)^{-1} & 0 \\ 0 & I} = G(X)^{-1} L_{\widetilde{B}}(X)^{-1} F(X)^{-1} \foral X \in \bD_Q.
\end{equation*}

From here, we observe that
\begin{equation*}
    \bmat{P(rX)^{-1}P(X) & 0 \\ 0 & I} = G(rX)^{-1} \ \underbrace{\big( F(rX)L_{\widetilde{B}}(rX) \big)^{-1} F(X) L_{\widetilde{B}}(X)}_{=: \ (P_1^{(r)})^{-1} P_1} \ G(X),
\end{equation*}
and
\begin{equation*}
    \bmat{P(X) P(rX)^{-1} & 0 \\ 0 & I} = F(X) \ \underbrace{L_{\widetilde{B}}(X) G(X) \big(L_{\widetilde{B}}(rX) G(rX)\big)^{-1}}_{=: \ P_2 (P_2^{(r)})^{-1}} \ F(rX)^{-1},
\end{equation*}
for all $r<1$ and $X\in \bD_Q$. If $F = \sum_{j = 0}^N F_j$ and $G = \sum_{j = 0}^M G_j$ are the homogeneous expansions of $F$ and $G$, then since $\widetilde{B} \in \overline{\bD_Q^\circ}$ we can invoke Lemma \ref{lem:NGN.linear.pencil} for $P_1$ and the observation in Remark \ref{rem:NGN.linear.pencil} for $P_2$ to conclude that
\begin{align}
    \begin{split}\label{eqn:lem.NGN.linear.pencil.proof.3}
        \sup_{r < 1} \big\| \big( P^{(r)} \big)^{-1} P \big\|_Q &\leq \Big[1 + \big[ (N^2 + N + 1) \|F^{-1}\|_Q \|F\|_Q \big]\Big] \|G^{-1}\|_Q \|G\|_Q < \infty, \\
        \sup_{r < 1} \big\|P \big( P^{(r)} \big)^{-1} \big\|_Q &\leq \Big[1 + \big[ (M^2 + M + 1) \|G^{-1}\|_Q \|G\|_Q \big]\Big] \|F^{-1}\|_Q \|F\|_Q < \infty.
    \end{split}
\end{align}
This completes the proof. \hfill \qedsymbol

\begin{remark}\label{rem:pencil.atomic.factor}
    In general, Theorem \ref{mainthm:NGN.Q-stable.atom} does not hold if $P \in M_k(\bC \langle Z \rangle)$ is not an atom. Indeed, consider the linear pencil $L_A(Z)\in M_2(\bC \langle Z \rangle)$ introduced in Example \ref{example:Jordan.block.pencil}. As previously explained, the inequality in Theorem \ref{mainthm:NGN.Q-stable.atom} does not hold for $L_A(Z)$. Moreover, it is not an atom:
    \begin{equation*}
        L_A(Z) = \bmat{I & 0 \\ 0 & I - Z} \bmat{I & -Z \\ 0 & I} \bmat{I-Z & 0 \\ 0 & I}.
    \end{equation*}
    Here, note that both $\bsmallmat{I & 0 \\ 0 & I - Z}$ and $\bsmallmat{I - Z & 0 \\ 0 & I}$ are atoms by the virtue of being stably associated to the irreducible pencil $I - Z$, and that $\bsmallmat{I & -Z \\ 0 & I} \in GL_2(\bC \langle Z \rangle)$. We are therefore forced to interpret the NGN inequality for general $Q$-stable matrices of free polynomials as follows.
\end{remark}

\begin{corollary}\label{cor:NGN.general}
    If $P \in M_k(\bC \langle Z \rangle)$ is $Q$-stable with atomic factorization $P = P_1 P_2 \dots P_l$, then
    \begin{align*}
        \sup_{r < 1} \big\| \big( {P_1}^{(r)} \big)^{-1} P_1 \big( {P_2}^{(r)} \big)^{-1} P_2 \dots \big( {P_l}^{(r)} \big)^{-1} P_l \big\|_Q &< \infty, \\
        \sup_{r < 1} \big\| P_1 \big( {P_1}^{(r)} \big)^{-1} P_2 \big( {P_2}^{(r)} \big)^{-1} \dots P_l \big( {P_l}^{(r)} \big)^{-1} \big\|_Q &< \infty.
    \end{align*}
\end{corollary}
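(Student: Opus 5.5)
The plan is to reduce the statement to Theorem \ref{mainthm:NGN.Q-stable.atom}, applied separately to each atomic factor $P_j$, and then use submultiplicativity of the norm $\|\cdot\|_Q$ on $M_k(H^\infty(\bD_Q))$. Write $P = P_1 \dots P_l$ for the atomic factorization supplied by Cohn's result (as recalled before Lemma \ref{lem:Higman's.linearization.trick}).

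The first step is to check that each atom $P_j$ is itself $Q$-stable. For every $X \in \bD_Q$,
\begin{equation*}
    \det P(X) = \prod_{j=1}^l \det P_j(X),
\end{equation*}
and the left-hand side is non-zero by hypothesis, so $\det P_j(X) \neq 0$ for each $j$. Hence each $P_j$ is a $Q$-stable atom in $M_k(\bC \langle Z \rangle)$. The same reasoning applied at the point $rX \in \bD_Q$ shows that each $P_j^{(r)}(X) = P_j(rX)$ is invertible, so every factor appearing in the products of the corollary is well defined on $\bD_Q$.

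The second step is to apply Theorem \ref{mainthm:NGN.Q-stable.atom} to each $P_j$. This gives constants
\begin{equation*}
    C_j := \sup_{r<1} \big\| \big(P_j^{(r)}\big)^{-1} P_j \big\|_Q < \infty \qand D_j := \sup_{r<1} \big\| P_j \big(P_j^{(r)}\big)^{-1} \big\|_Q < \infty .
\end{equation*}

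The third step fixes $r<1$ and $X \in \bD_Q$ and bounds the operator norm of the product of matrices by the product of the norms. This yields
\begin{equation*}
    \Big\| \prod_{j} P_j(rX)^{-1} P_j(X) \Big\| \le \prod_{j=1}^l C_j ,
\end{equation*}
and, in the same way, the second expression in the corollary is bounded by $\prod_{j=1}^l D_j$. Taking the supremum over $X \in \bD_Q$ and then over $r<1$ completes the argument.

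There is no genuine obstacle here, since all of the analytic difficulty is absorbed by Theorem \ref{mainthm:NGN.Q-stable.atom}. The only point requiring care is that the atoms in Cohn's factorization really are $Q$-stable, so that the theorem applies to them. This follows directly from the multiplicativity of the determinant, as in the first step, and does not need the stable-associativity normalisation $P_j(0) = I$; the proof of Theorem \ref{mainthm:NGN.Q-stable.atom} already handles that normalisation internally.
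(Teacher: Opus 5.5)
Your proposal is correct and is essentially the paper's argument. The paper states this corollary as an immediate consequence of the atomic factorization and Theorem \ref{mainthm:NGN.Q-stable.atom}: each atom $P_j$ is $Q$-stable (which the paper notes when recalling Cohn's factorization, and which your determinant argument justifies), Theorem \ref{mainthm:NGN.Q-stable.atom} bounds each factor $\big(P_j^{(r)}\big)^{-1}P_j$ and $P_j\big(P_j^{(r)}\big)^{-1}$ uniformly in $r$, and submultiplicativity of $\|\cdot\|_Q$ finishes the proof.
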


\subsection{Further analysis of the bounds}\label{subsec:analysis.of.bounds} In certain situations, one can obtain more concrete bounds than those from the proof of Theorem \ref{mainthm:NGN.Q-stable.atom}. Let $P \in \bC \langle Z \rangle$ be a $Q$-stable atom, given by the homogeneous expansion $P=\sum_{j=0}^N P_j$, and suppose $P\sim_{\bU\bL} L_A$ for some irreducible linear pencil $L_A\in M_n(\bC \langle Z \rangle)$. As we saw in Example \ref{eg:Higman.linearization}, this situation appears naturally at times by simply applying Higman's trick \eqref{eqn:Higman.id} to $P$. Clearly, $L_A$ is $Q$-stable as well. Thus, $L_A(X)^{-1}$ exists for all $X\in\bD_Q$, so that $\rho_{Q^\circ}(A)\leq 1$ by Theorem \ref{thm:spec.rad.dom.pencil-SS}. Therefore, by Theorem \ref{thm:spec.rad.properties-SS}, $A$ is jointly similar to some element $B\in \overline{\bD_{Q^\circ}}$ via a similarity $S$ of appropriate size. Under these assumptions, we can improve the bounds in Theorem \ref{mainthm:NGN.Q-stable.atom} as follows. Recall that the \emph{condition number} of $S$ is defined as the quantity $\kappa(S) := \|S^{-1}\| \|S\|$.

\begin{proposition}\label{prop:improved.bounds}
    If $P \in \bC \langle Z \rangle$, $L_A$, $S$ and $B$ are as above, then
    \begin{align*}
        \sup_{r < 1} \big\| P \big(P^{(r)} \big)^{-1} \big\|_Q &\leq 1+\kappa(S)\sum_{j=1}^N j \|P_j\|_Q,\\
        \sup_{r < 1} \big\| \big( P^{(r)} \big)^{-1} P \big\|_Q &\leq 1+\kappa(S)\sum_{j=1}^N j \|P_j\|_Q.
    \end{align*}
\end{proposition}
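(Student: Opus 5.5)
The plan is to exploit the uni-triangular structure of the linearization to extract $P(X)^{-1}$ as a corner of $L_A(X)^{-1}$. Once that is done, the scalar-type identity used in the proof of Lemma \ref{lem:NGN.linear.pencil} finishes the argument, combined with the bound of Lemma \ref{lem:NGN.polynomials}.

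\emph{Step 1: the corner identity.} Since $P \sim_{\bU\bL} L_A$, there are $F \in \bU_n(\bC \langle Z \rangle)$ and $G \in \bL_n(\bC \langle Z \rangle)$ with
\begin{equation*}
    \bmat{P & 0 \\ 0 & I_{n-1}} = F L_A G .
\end{equation*}
Here $P$ is scalar, so it occupies the $(1,1)$ slot, and we may take $L_A$ itself to be of size $n$ after padding with identities if needed. For $X \in \bD_Q(m)$, all three factors are invertible, because $L_A$ is $Q$-stable and $F, G$ are unipotent. Inverting gives
\begin{equation*}
    \bmat{P(X)^{-1} & 0 \\ 0 & I} = G(X)^{-1} L_A(X)^{-1} F(X)^{-1}.
\end{equation*}
We compress both sides with $E := e_1 \otimes I_m$. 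The inverse $G(X)^{-1}$ is again lower uni-triangular, so its first block row is $E^*$. Likewise $F(X)^{-1}$ is upper uni-triangular, so its first block column is $E$. It follows that
\begin{equation*}
    P(X)^{-1} = E^* L_A(X)^{-1} E .
\end{equation*}
This is the step that needs the most care: one must check that the block-triangular inverses really do fix the first block row and column under the chosen ordering of the tensor factors in $A \otimes X$. I expect it to go through by the nilpotency noted in Remark \ref{rem:Higman.linearization.subtlety}.

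\emph{Step 2: resolvent bound.} Write $A = S B S^{-1}$ with $B \in \overline{\bD_{Q^\circ}}$. Then
\begin{equation*}
    L_A(rX)^{-1} = (S \otimes I)\, L_B(rX)^{-1}\, (S^{-1} \otimes I).
\end{equation*}
As in \eqref{eqn:lem.NGN.linear.pencil.proof.1}, inequality \eqref{eqn:polar.dual.norm.prop} gives $\|r(B \otimes X)\| \le r < 1$. The Neumann series then yields $\|L_B(rX)^{-1}\| \le (1-r)^{-1}$. Combining this with Step 1 gives
\begin{equation*}
    \|P(rX)^{-1}\| \le \frac{\kappa(S)}{1-r} \foral X \in \bD_Q,\ r<1 .
\end{equation*}

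\emph{Step 3: conclusion.} We write
\begin{align*}
    P(X)P(rX)^{-1} &= I + \big(P(X) - P(rX)\big)P(rX)^{-1}, \\
    P(rX)^{-1}P(X) &= I + P(rX)^{-1}\big(P(X) - P(rX)\big).
\end{align*}
Lemma \ref{lem:NGN.polynomials} gives $\|P(X) - P(rX)\| \le (1-r)\sum_{j=1}^N j\|P_j\|_Q$. Multiplying this by the bound from Step 2, the factors $1-r$ cancel. Both quantities are therefore bounded by $1 + \kappa(S)\sum_{j=1}^N j\|P_j\|_Q$, uniformly in $r<1$ and $X \in \bD_Q$. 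Taking the supremum gives both inequalities of the proposition.
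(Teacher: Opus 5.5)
Your proof is correct and follows the paper's argument essentially step for step. The uni-triangular inverses isolate $P(X)^{-1}$ as the $(1,1)$ corner of $L_A(X)^{-1}$. The similarity to $B$ together with the Neumann series gives $\|(P^{(r)})^{-1}\|_Q \le \kappa(S)/(1-r)$, and Lemma \ref{lem:NGN.polynomials} then yields both inequalities. One minor point: $S$ is the similarity for the unpadded $A$. So either bound the corner by $\|L_A(X)^{-1}\|$ before padding, as the paper does, or conjugate the padded tuple $A\oplus 0$ by $S\oplus \|S\|I$, which has the same condition number $\kappa(S)$.
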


\begin{proof}
    Since $P\sim_{\bU\bL} L_A$, there exist $F\in \bU_l(\bC \langle Z \rangle)$ and $G\in \bL_l(\bC \langle Z \rangle)$ such that 
    \begin{equation*}
        \bmat{P(X) & 0 \\ 0 & I} = F(X) \underbrace{\bmat{L_A(X) & 0 \\ 0 & I}}_{=: \ L_{\widetilde{A}}(X)} G(X) \foral X \in \bD_Q.
    \end{equation*}
    In particular, since every matrix in the above equation is invertible, we get
     \begin{equation}\label{eq:P.inverse.block.of.L.inverse}
        \bmat{P(X)^{-1} & 0 \\ 0 & I} = G(X)^{-1} L_{\widetilde{A}}(X)^{-1} F(X)^{-1} \foral X \in \bD_Q.
    \end{equation}
    Note that $F^{-1}\in \bU_l(\bC \langle Z \rangle)$ and $G^{-1}\in \bL_l(\bC \langle Z \rangle)$. Thus, \eqref{eq:P.inverse.block.of.L.inverse} shows that $P^{-1}$ is equal to the $(1,1)$ entry of $L_A^{-1}$, so that
    \begin{equation*}
        \|P^{-1}(X)\|\leq \big\| L_A^{-1}(X) \big\| \foral X\in \bD_Q.
    \end{equation*}
    Next, observe that
    \begin{equation*}
        \big\| L_A^{-1}(X) \big\| \leq \kappa(S) \big\| L_{B}^{-1}(X) \big\| \foral X\in \bD_Q.
    \end{equation*}
    Moreover, $\|B\|_{Q^\circ}\leq 1$ so that, for every $X\in\bD_Q$ and $r<1$, we can write $L_B(rX)^{-1}$ as a Neumann series and repeat the proof of Lemma \ref{lem:NGN.linear.pencil} to get $\|L_B(rX)^{-1}\|\leq \frac{1}{1-r}$ for all $X\in\bD_Q$. Putting all these inequalities together gives us
    \begin{equation}\label{eq:P.inverse.bound.conditional.number}
        \|\big(P^{(r)}\big)^{-1}\|_Q \leq \frac{\kappa(S)}{1-r} \foral r<1. 
    \end{equation}
    Recall the identity
    \begin{equation*}
        \big(P^{(r)}\big)^{-1}P=I+\big(P^{(r)}\big)^{-1}\big(P-P^{(r)}\big). 
    \end{equation*}
    We can then estimate the norm of the right hand side by applying \eqref{eq:P.inverse.bound.conditional.number} and Lemma \ref{lem:NGN.polynomials} to obtain the desired upper bound for $\sup_{r < 1} \big\| \big(P^{(r)} \big)^{-1} P\big\|_Q$.

    The second inequality can be obtained from an analogous argument.
\end{proof}

\begin{example}\label{eg:concrete.bounds}
    Let us compute some concrete bounds in Theorem \ref{mainthm:NGN.Q-stable.atom} and Proposition \ref{prop:improved.bounds}. 
    \begin{enumerate}
        \item Let $P(Z,W) = I - \frac{ZW}{2} - \frac{WZ}{2}$ be as in Example \ref{eg:Higman.linearization} and note that $P$ is $\fD_2$-stable. We observed that $P$ is an atom that exhibits a linearization
        \begin{equation*}
            \bmat{P(Z,W) & 0 \\ 0 & I_2} = \underbrace{\bmat{I & \frac{W}{\sqrt{2}} & \frac{Z}{\sqrt{2}} \\ 0 & I & 0 \\ 0 & 0 & I}}_{=: \ F} \underbrace{\bmat{I & \frac{-W}{\sqrt{2}} & \frac{-Z}{\sqrt{2}} \\ \frac{-Z}{\sqrt{2}} & I & 0 \\ \frac{-W}{\sqrt{2}} & 0 & I}}_{=: \ L_A} \underbrace{\bmat{I & 0 & 0 \\ \frac{Z}{\sqrt{2}} & I & 0 \\ \frac{W}{\sqrt{2}} & 0 & I}}_{=: \ G}.
        \end{equation*}
        Note that $A = (A_Z, A_W)$ satisfies
        \begin{equation*}
            \left\| A_Z \otimes X + A_W \otimes Y \right\| =\left\| \bmat{0 & \frac{Y}{\sqrt{2}} & \frac{X}{\sqrt{2}} \\ \frac{X}{\sqrt{2}} & 0 & 0 \\ \frac{Y}{\sqrt{2}} & 0 & 0} \right\| \leq 1 \foral (X, Y) \in \fD_2.
        \end{equation*}
        Then, \eqref{eqn:polar.dual.boundary.condition} shows that $A \in \overline{\fD_2^\circ}$, and we can therefore compute the bounds in \eqref{eqn:lem.NGN.linear.pencil.proof.3} and Proposition \ref{prop:improved.bounds} directly without having to find a similarity equivalence of $A$ with some $B \in \overline{\fD_2^\circ}$.
        
        Since both $F$, $G$ are of the form $I + J$ with $J^2 = 0$, we use the estimate
        \begin{equation*}
            \max\{\|I + J\|, \|I - J\|\} \leq 1 + \|J\|
        \end{equation*}
        to conclude that
        \begin{align*}
            \|F\|_{\fD_2} = \|F^{-1}\|_{\fD_2} &\leq 1 + \left\|\bmat{\frac{W}{\sqrt{2}} & \frac{Z}{\sqrt{2}}}\right\|_{\fD_2} = 2, \\
            \|G\|_{\fD_2} = \|G^{-1}\|_{\fD_2} &\leq 1 + \left\|\bmat{\frac{Z}{\sqrt{2}} & \frac{W}{\sqrt{2}}}\right\|_{\fD_2} = 2.
        \end{align*}
        Plugging these values into \eqref{eqn:lem.NGN.linear.pencil.proof.3}, we get
        the bound $\leq 52$ for both quantities. If we instead note that $\|P_2\|_{\fD_2} = 1$ and apply Proposition \ref{prop:improved.bounds} then we get the bound $\leq 3$.

        However, in this particular instance, there is a much simpler way of obtaining a better bound by noting that $P$ is of the form $I - \widetilde{P}$, where $\widetilde{P}(Z,W)$ is a strict contraction for each $Z, W \in \fD_2$. Thus, one can compute the Neumann series for $\big(P^{(r)}\big)^{-1} = \big( I - \widetilde{P}^{(r)} \big)^{-1}$ and proceed as in the proof of Lemma \ref{lem:NGN.linear.pencil} (for the case $F = I$) to show that
        \begin{equation*}
            \sup_{r < 1} \big\| \big(P^{(r)}\big)^{-1} P \big\|_{\fD_2} \leq 2 \qand \sup_{r < 1} \big\|P \big(P^{(r)}\big)^{-1} \big\|_{\fD_2} \leq 2.
        \end{equation*}
        In fact, the above inequalities are both equalities by noting that
        \begin{equation*}
            \big\| (P^{(0)})^{-1} P \big\|_{\fD_2} = \big\| P (P^{(0)})^{-1} \big\|_{\fD_2} = \|P\|_{\fD_2} = 2
        \end{equation*}
        The next example shows that this situation does not necessarily arise all the time, even for the case $k = 1$.

        \item Consider $P(Z,W) = I - \frac{2Z}{3} - \frac{2W}{3} + \frac{ZW}{3}$ over $\fD_2$ and obtain the linearization
        \begin{equation*}
            \bmat{P(Z,W) & 0 \\ 0 & I} = \bmat{I & -\frac{Z}{\sqrt{3}} \\ 0 & I} \underbrace{\bmat{I - \frac{2Z}{3} - \frac{2W}{3} & \frac{Z}{\sqrt{3}} \\ -\frac{W}{\sqrt{3}} & I}}_{=: \ L_A} \bmat{I & 0 \\ \frac{W}{\sqrt{3}} & I}.
        \end{equation*}
        That $L_A$ is irreducible is easy to check, so $P$ is an atom. Moreover, $P$ is $\fD_2$-stable as
        \begin{equation*}
            P(Z,W) = \frac{1}{3}\big((2I - Z)(2I - W) - I\big).
        \end{equation*}
        Indeed, if there exists a vector $v$ so that $P(Z,W)v = 0$, then we get that
        \begin{equation*}
            \|(2I-Z)(2I-W)v\| = \|v\|.
        \end{equation*}
        This cannot happen since
        \begin{equation*}
            \|(2I - W)v\| \geq 2\|v\| - \|Wv\| > \|v\|,
        \end{equation*}
        and similarly for $x = (2I - W)v$ we get
        \begin{equation*}
            \|(2I - Z)x\| > \|x\| > \|v\|.
        \end{equation*}
        Lastly, $\widetilde{P} = I - P$ is clearly not a contraction (e.g., take $Z = W = -0.9$). However, we note that $P$ has zeros on the boundary, e.g., $P(I,I) = 0$, so that the computation for the NGN-type bound is still a non-trivial exercise (otherwise $\|P^{-1}\|_Q < \infty$ and the NGN-type bound is simply $\|P\|_Q \|P^{-1}\|_Q$).

        Before applying the bounds from Proposition \ref{prop:improved.bounds}, we first need to compute a joint similarity $S$ as in Theorem \ref{thm:spec.rad.properties-SS}$(2)$ so that $S^{-1} A S \in \overline{\fD_2^\circ}$, since $A$ is irreducible but $A \not\in \overline{\fD_2^\circ}$ (check \eqref{eqn:polar.dual.boundary.condition} for $Z, W$ close to $I$). To this end, let $S = \bsmallmat{\sqrt{3} & 1 \\ 1 & \sqrt{3}}$ be the matrix whose columns are given by the eigenvectors of $A_Z + A_W$, and note that
        \begin{equation*}
            B := S^{-1} A S = \left( B_Z := \bmat{\frac{1}{2} & -\frac{1}{2 \sqrt{3}} \\ -\frac{1}{2 \sqrt{3}} & \frac{1}{6}}, B_W := \bmat{\frac{1}{2} & \frac{1}{2 \sqrt{3}} \\ \frac{1}{2 \sqrt{3}} & \frac{1}{6}} \right).
        \end{equation*}
        It follows that $B_Z = u u^t$ and $B_W = vv^t$ for the vectors
        \begin{equation*}
            u = \bmat{\frac{1}{\sqrt{2}} & -\frac{1}{\sqrt{6}}}^t \qand v = \bmat{\frac{1}{\sqrt{2}} & \frac{1}{\sqrt{6}}}^t,
        \end{equation*}
        and that, for any given $(X,Y) \in \fD_2$, we have
        \begin{equation*}
            \|B_Z \otimes X + B_W \otimes Y\| = \left\| \bmat{u \otimes I & v \otimes I} \bmat{ I \otimes X & 0 \\ 0 & I \otimes Y} \bmat{u^t \otimes I \\ v^t \otimes I} \right\| \leq \left\| \bmat{u & v} \right\|^2 = 1.
        \end{equation*}
        Thus, $B \in \overline{\fD_2^\circ}$ as desired. Lastly, note that $\kappa(S) = 2 + \sqrt{3}$, $\|P_1\|_{\fD_2} = \frac{4}{3}$ and $\|P_2\|_{\fD_2}=\frac{1}{3}$, so we can compute the bound in Proposition \ref{prop:improved.bounds} to obtain
        \begin{equation*}
            \sup_{r < 1} \big\| (P^{(r)})^{-1} P \big\|_{\fD_2} \leq 5 + 2 \sqrt{3} \approx 8.46.
        \end{equation*}
    \end{enumerate}
\end{example}

\section{Cyclicity of matrix free polynomials}\label{sec:cyclicity}

\subsection{Canonical weak-* topology}\label{subsec:canon.weak-*.top}

Shalit and the first named author established in \cite{Sampat-Shalit-Weak-star} that, for any given NC operator ball $\bD_Q$, $H^\infty(\bD_Q)$ exhibits a canonical weak-* topology by constructing a pre-dual as follows: for any $n\in \bN$ and $X\in \bD_Q(n)$, let $\Phi_X: F \mapsto F(X) \in M_n$ be the \emph{matrix evaluation map} on $H^\infty(\bD_Q)$. Next, for any $\eta\in M_n^*$, let $\varphi_{\eta,X}:=\eta\circ \Phi_X$. Clearly, $\varphi_{\eta,X}\in (H^\infty(\bD_Q))^*$, so that we can define
\begin{equation*}
    \cX(\bD_Q):=\overline{\spn}\{\varphi_{\eta,X}: X\in \bD_Q(n), \eta\in M_n^*, n\in\bN\}\subset (H^\infty(\bD_Q))^*.
\end{equation*}

By pre-dual of a Banach space $\cY$ we mean the following: let $\cX\subset \cY^*$ be a subspace of its dual. Using the canonical embedding of $\cY$ into its bidual $\cY^{**}$, we can view $\cY$ as a space of bounded linear functionals on $\cX$. If under this embedding we have $\cY=\cX^*$, then we say that $\cX$ is a pre-dual of $\cY$. In this case, we consider the $\sigma(\cY, \cX)$ topology on $\cY$ for which every $\varphi \in \cX$ is continuous. Note that in general a pre-dual need not exist, and if it exists, it need not be unique. 

\begin{theorem}[Theorem 3.1, \cite{Sampat-Shalit-Weak-star}]\label{thm:Sampat.Shalit.weak-*}
    $\cX(\bD_Q)$ is the unique pre-dual $\cX$ of $H^\infty(\bD_Q)$ such that the matrix point-evaluation maps $\Phi_X$ are $\sigma(H^\infty(\bD_Q),\cX)$ continuous for all $X\in \bD_Q$.
\end{theorem}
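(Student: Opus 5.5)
The plan is to prove existence and uniqueness separately. Throughout, write $\cY := H^\infty(\bD_Q)$ and $\cX := \cX(\bD_Q)$.

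\textbf{Existence: $\cX$ is a pre-dual.} Consider the canonical map $\iota : \cY \to \cX^*$ given by $\iota(F)(\varphi) := \varphi(F)$.

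First, $\iota$ is isometric. We have $\|\varphi_{\eta,X}\| \leq \|\eta\|$, and for each $X \in \bD_Q(n)$ we have $\|F(X)\| = \sup\{|\eta(F(X))| : \|\eta\|_{M_n^*} \leq 1\}$. Taking the supremum over $X$ gives $\|\iota(F)\| = \|F\|_Q$.

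The main work is surjectivity. Fix $\psi \in \cX^*$. For each $X \in \bD_Q(n)$, the map $\eta \mapsto \psi(\varphi_{\eta,X})$ is a linear functional on $M_n^*$. Since $M_n$ is finite-dimensional, it is given by a unique matrix $F_\psi(X) \in M_n$ with
\begin{equation*}
    \eta(F_\psi(X)) = \psi(\varphi_{\eta,X}) \quad \text{and} \quad \|F_\psi(X)\| \leq \|\psi\|.
\end{equation*}
Next I will check that $F_\psi$ is an NC function. The key observation is that the relations satisfied by evaluation functionals on $\cY$ already hold inside $\cX$:
\begin{itemize}
    \item For direct sums, $\varphi_{\eta, X \oplus Y}$ equals $\varphi_{\eta_{11},X} + \varphi_{\eta_{22},Y}$. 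Here $\eta_{11}, \eta_{22}$ are the compressions of $\eta$ to the diagonal blocks, and the off-diagonal blocks contribute nothing because every $G \in \cY$ satisfies $G(X \oplus Y) = G(X) \oplus G(Y)$.
    \item For similarities, $\varphi_{\eta, S^{-1}XS} = \varphi_{\eta(S^{-1}\,\cdot\,S), X}$.
\end{itemize}
Both are identities between elements of $\cX \subseteq \cY^*$, so applying $\psi$ shows that $F_\psi$ is graded, respects direct sums and respects similarities. Since $F_\psi$ is uniformly bounded by $\|\psi\|$, it is locally bounded, and hence holomorphic by \cite[Chapter 7]{KVV-NC-book}. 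Therefore $F_\psi \in \cY$.

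Finally, $\iota(F_\psi)$ agrees with $\psi$ on the spanning set $\{\varphi_{\eta,X}\}$, so by continuity $\iota(F_\psi) = \psi$. The evaluation maps $\Phi_X$ are then $\sigma(\cY,\cX)$-continuous by construction, since each $\eta \circ \Phi_X$ lies in $\cX$.

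\textbf{Uniqueness.} Suppose $\cX' \subseteq \cY^*$ is another pre-dual for which every $\Phi_X$ is $\sigma(\cY,\cX')$-continuous. Then each $\varphi_{\eta,X} = \eta \circ \Phi_X$ is a $\sigma(\cY,\cX')$-continuous functional. The $\sigma(\cY,\cX')$-continuous functionals are exactly the elements of $\cX'$, so $\varphi_{\eta,X} \in \cX'$. Since $\cX'$ is norm closed, this gives $\cX \subseteq \cX'$.

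If the inclusion were proper, the Hahn--Banach theorem would give a non-zero element of $(\cX')^* = \cY$, say $G \neq 0$, which vanishes on $\cX$. Then $\eta(G(X)) = 0$ for all $\eta$ and all $X$, so $G \equiv 0$, a contradiction. Hence $\cX' = \cX$.

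\textbf{Main obstacle.} I expect the delicate step to be verifying that the functional identities used for the NC properties hold exactly in $\cX$. In particular, for the similarity axiom one must keep $S^{-1}XS$ inside $\bD_Q$ and confirm that the transported functional $\eta(S^{-1}\,\cdot\,S)$ is correct. The passage from NC and bounded to holomorphic, by contrast, is a citation.
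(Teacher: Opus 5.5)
Your proposal is correct, and it takes a different route from the one the paper relies on. The paper does not reprove this theorem; it cites \cite{Sampat-Shalit-Weak-star}. The proof of Proposition~\ref{prop:uniq.pre-dual.matrix.case} shows the route that is being followed:
\begin{enumerate}
\item show that $\cX(\bD_Q)$ norms $H^\infty(\bD_Q)$;
\item show that the closed unit ball of $H^\infty(\bD_Q)$ is $\sigma(H^\infty(\bD_Q),\cX(\bD_Q))$-compact;
\item invoke the general predual criterion of \cite[Section 2]{Davidson-Wright-predual}.
\end{enumerate}
The compactness step is a Montel/Tychonoff-type argument. A bounded net has a pointwise convergent subnet, the pointwise limit is again a bounded NC function, and on bounded sets pointwise convergence is equivalent to convergence against the dense span of the $\varphi_{\eta,X}$.

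You instead show directly that the canonical map $\iota\colon H^\infty(\bD_Q)\to\cX(\bD_Q)^*$ is an isometric surjection. You reconstruct $F_\psi$ from each $\psi$ via $M_n^{**}=M_n$, and you transfer the direct-sum and similarity axioms, which already hold as identities among the $\varphi_{\eta,X}$ inside $\cX(\bD_Q)$.

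Both arguments rest on the same fact: the NC axioms are linear relations that the evaluation functionals can see. Your version is self-contained, needing no abstract criterion and no Tychonoff, and it describes $\cX(\bD_Q)^*$ explicitly. The norming/compactness route is more modular. Along the way it produces the identification of weak-* convergence with pointwise convergence on bounded sets, which the paper uses repeatedly. It also passes to $M_k(H^\infty(\bD_Q))$ by checking the two conditions entrywise.

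Your uniqueness argument is the standard one, matching what the paper says carries over. It implicitly uses the usual convention that a predual is norm closed.

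Two minor remarks:
\begin{itemize}
\item The similarity step is not actually delicate. The axiom only has to be checked when $S^{-1}XS\in\bD_Q$, which is exactly the case your identity $\varphi_{\eta,S^{-1}XS}=\varphi_{\eta(S^{-1}\cdot S),X}$ covers.
\item The paper defines $H^\infty(\bD_Q)$ as the bounded NC functions, so the appeal to holomorphy is not needed to conclude $F_\psi\in H^\infty(\bD_Q)$.
\end{itemize}
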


Note that $\cX(\bD_Q)$ was not shown to be an operator space pre-dual of $H^\infty(\bD_Q)$. We can nevertheless directly establish a similar result for each $M_k(H^\infty(\bD_Q)))=M_k\otimes H^\infty(\bD_Q)$ and $k \in \bN$. To this end, define
\begin{equation*}
    \cX_k(\bD_Q):=M_k^*\otimes \cX(\bD_Q),
\end{equation*}
and note that $\cX_k(\bD_Q)$ is a closed subspace of $(M_k\otimes H^\infty(\bD_Q))^*$. Let $E_{ij}$ be the standard matrix units in $M_k$ and let $\{e_{ij}\}$ be a dual basis of $M_k^*$. Then, every $\varphi \in \cX_k(\bD_Q)$ can be written uniquely as $\varphi=\sum_{i,j=1}^k e_{ij}\otimes \varphi_{ij}$ for appropriate $\varphi_{ij} \in \cX(\bD_Q)$. Lastly, for $X\in \bD_Q(n)$, let $\Phi_X: F \mapsto F(X) \in M_k(M_n)$ be the matrix point-evaluation map on $M_k(H^\infty(\bD_Q))$.

\begin{proposition}\label{prop:uniq.pre-dual.matrix.case}
    $\cX_k(\bD_Q)$ is the unique pre-dual $\cX_k$ of $M_k(H^\infty(\bD_Q))$ such that the matrix point-evaluation maps $\Phi_X$ are $\sigma(M_k(H^\infty(\bD_Q)),\cX_k)$ continuous for all $X\in \bD_Q$.
\end{proposition}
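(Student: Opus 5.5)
We prove existence and uniqueness separately, reducing each to Theorem \ref{thm:Sampat.Shalit.weak-*} entrywise.

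\emph{Existence.} Since $M_k(H^\infty(\bD_Q)) \cong M_k \otimes H^\infty(\bD_Q)$ is algebraically a direct sum of $k^2$ copies of $H^\infty(\bD_Q)$, it is also topologically so: all norms on $M_k$ are equivalent, so the matrix norm $\|\cdot\|^{(k)}_Q$ is equivalent to $\max_{i,j}\|F_{ij}\|_Q$. Hence $(M_k \otimes H^\infty(\bD_Q))^*$ is isomorphic to $M_k^* \otimes H^\infty(\bD_Q)^*$, with $\varphi = \sum e_{ij}\otimes \varphi_{ij}$ acting by $\varphi(F) = \sum_{i,j} \varphi_{ij}(F_{ij})$. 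The subspace $\cX_k(\bD_Q)$ is then the finite direct sum of $k^2$ copies of $\cX(\bD_Q)$, which is closed, and its dual is (isomorphic to) $M_k\otimes \cX(\bD_Q)^* = M_k \otimes H^\infty(\bD_Q)$. We must check that this identification coincides with the canonical embedding into the bidual. This follows entrywise from the corresponding fact for $\cX(\bD_Q)$: a functional $\psi$ on $\cX_k$ restricts on each summand $e_{ij}\otimes\cX(\bD_Q)$ to evaluation at some $F_{ij}\in H^\infty(\bD_Q)$. Thus $\cX_k(\bD_Q)$ is a pre-dual, and $\sigma(M_k(H^\infty),\cX_k)$ convergence of a net is exactly weak-* convergence of each entry.

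For continuity of $\Phi_X$, write $\Phi_X(F) = (F_{ij}(X))_{i,j}$. Each coordinate functional $\eta\circ\Phi_X$ with $\eta\in (M_k(M_n))^*$ decomposes as $\sum_{i,j}\eta_{ij}(F_{ij}(X))$. Each term is $e_{ij}\otimes\varphi_{\eta_{ij},X}$, which lies in $\cX_k(\bD_Q)$. So these functionals are continuous in the $\sigma(M_k(H^\infty(\bD_Q)),\cX_k)$ topology.

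\emph{Uniqueness.} Let $\cX_k$ be another pre-dual for which all $\Phi_X$ are $\sigma(\cdot,\cX_k)$-continuous. Then every $\eta\circ\Phi_X$ is a $\sigma(\cdot,\cX_k)$-continuous functional, hence lies in $\cX_k$. Because $\cX_k$ is norm closed, it follows that $\cX_k(\bD_Q)\subseteq\cX_k$. To get equality we use a standard fact: if $\cX \subseteq \cX'$ are both pre-duals of the same space $\cY$ (inside $\cY^*$), then $\cX = \cX'$. Indeed, if $\cX\subsetneq\cX'$, Hahn--Banach yields a nonzero functional on $\cX'$ vanishing on $\cX$. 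That functional is represented by some $0\neq y\in\cY$, since $\cY = (\cX')^*$. But then $y$ vanishes on all of $\cX$, while $\cY = \cX^*$ forces $y=0$, a contradiction.

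The main obstacle is the bookkeeping in the existence step, namely checking that the duality $\langle \cX_k(\bD_Q), M_k(H^\infty)\rangle$ realizes an isometric, not merely isomorphic, identification when the norms involved are the matrix norms. The definition of pre-dual used here only asks that $\cY=\cX^*$ under the canonical embedding. Since Banach-space pre-duality is insensitive to passing to an equivalent norm, I would handle this by stating explicitly that we work up to the equivalent norm on $M_k(H^\infty(\bD_Q))$. If an isometric statement is wanted, I would equip $\cX_k(\bD_Q)$ with the norm induced from $M_k(H^\infty(\bD_Q))^*$. I would then apply the bipolar/Krein--Smulian theorem to show that the unit ball of $M_k(H^\infty(\bD_Q))$ is $\sigma(\cdot,\cX_k)$-compact. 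Compactness follows entrywise from Banach--Alaoglu for $\cX(\bD_Q)$, together with the fact that the weak-* limit of a net in the unit ball still satisfies $\sup_X\|(F_{ij}(X))\|\le 1$, because pointwise evaluation is continuous.
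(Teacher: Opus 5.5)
Your proposal is correct, but it reaches the isometric identification by a different route than the paper.

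\textbf{The paper's route.} The paper proves two things directly.
\begin{enumerate}
\item $\cX_k(\bD_Q)$ norms $M_k(H^\infty(\bD_Q))$. This uses that $\eta\circ\Phi_X\in\cX_k(\bD_Q)$ for every $\eta\in (M_k(M_n))^*$, and picks a unit-norm $\eta_X$ with $|\eta_X(F(X))|=\|F(X)\|$.
\item The closed unit ball is $\sigma(M_k(H^\infty(\bD_Q)),\cX_k(\bD_Q))$-compact. This is shown by entrywise Banach--Alaoglu, exactly as you do.
\end{enumerate}
Norming plus compactness then gives an isometric pre-dual by the standard Davidson--Wright criterion. Continuity of $\Phi_X$ and uniqueness are simply deferred to the scalar theorem.

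\textbf{Your route.} You first get an isomorphic identification for free from the $k^2$-fold direct-sum structure. You then upgrade it to an isometric one via compactness and the bipolar theorem. You also write out uniqueness explicitly (containment of pre-duals plus Hahn--Banach), which makes your argument more self-contained than the paper's on that point.

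\textbf{Two points in your last paragraph need tightening.}

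First, the remark that pre-duality is insensitive to passing to an equivalent norm is true only in the isomorphic sense. Here ``pre-dual'' means isometric, and an equivalent norm on a dual space need not be a dual norm. So your first option (working up to equivalent norms) does not by itself prove the statement.

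Second, the bipolar theorem is not what proves compactness; you get compactness from entrywise Banach--Alaoglu. The bipolar theorem is what converts compactness into isometry. Let $\iota$ be the canonical map into $\cX_k(\bD_Q)^*$ and $B$ the closed unit ball. Then:
\begin{itemize}
\item $\iota(B)$ is weak-* compact and absolutely convex;
\item its polar in $\cX_k(\bD_Q)$ is the unit ball of $\cX_k(\bD_Q)$, by definition of the induced norm;
\item hence $\iota(B)$ is the unit ball of $\cX_k(\bD_Q)^*$;
\item injectivity of $\iota$ (evaluations separate points) then makes $\iota$ isometric.
\end{itemize}
This step should be stated rather than gestured at. Alternatively, your continuity step already shows $\eta\circ\Phi_X\in\cX_k(\bD_Q)$ for all $\eta$. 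Choosing a norming $\eta$ gives the norming property immediately, so the paper's shorter route is available to you and the isomorphic detour becomes unnecessary.
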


\begin{proof}
    We proceed exactly as in the proof of Theorem \ref{thm:Sampat.Shalit.weak-*}. To prove that $\cX_k(\bD_Q)$ is a pre-dual of $M_k(H^\infty(\bD_Q))$ there are two things we need to establish (see \cite[Section 3]{Sampat-Shalit-Weak-star} and \cite[Section 2]{Davidson-Wright-predual} for details):
    \begin{enumerate}
        \item $\cX_k(\bD_Q)$ norms $M_k(H^\infty(\bD_Q))$, i.e.,
        \begin{equation}\label{eqn:pre-dual.norms.dual}
            \sup \{\|\varphi(F)\| \ : \ \|\varphi\| \leq 1, \ \varphi \in \cX_k(\bD_Q)\} = \|F\|_Q \foral F \in M_k(H^\infty(\bD_Q)).
        \end{equation}
        \item The closed unit ball of $M_k(H^\infty(\bD_Q))$ is $\sigma(M_k(H^\infty(\bD_Q)),\cX_k(\bD_Q))$-compact.
    \end{enumerate}

    \underline{Proof of (1)}: The inequality $\leq$ in \eqref{eqn:pre-dual.norms.dual} is trivial, so we need only prove the inequality $\geq$. For any $X\in \bD_Q(n)$ and $\eta\in (M_k(M_n))^*=M_k^*\otimes M_n^*$, define $\varphi_{\eta,X}:=\eta\circ \Phi_X$. We claim that any such $\varphi_{\eta,X}$ is an element of $\cX_k(\bD_Q)$. Indeed, we can write $\eta=\sum_j \eta_j^{(k)}\otimes \eta_j^{(n)}$ for finitely many $\eta_j^{(k)} \in M_k^*$ and $\eta_j^{(n)} \in M_n^*$. One then checks that
    \begin{equation*}
        \varphi_{\eta,X}=\sum_j \eta_j^{(k)}\otimes \varphi_{\eta_j^{(n)},X}\in \cX_k(\bD_Q).
    \end{equation*}
    Thus, we get $\varphi_{\eta, X} \in \cX_k(\bD_Q)$, as claimed. Now, given any $F \in M_k(H^\infty(\bD_Q))$ and $X \in \bD_Q(n)$, let $\eta_X \in (M_k(M_n))^*$ be such that $\|\eta_X\| = 1$ and
    \begin{equation*}
        |\varphi_{\eta_X, X}(F)| = |\eta_X(F(X))| = \|F(X)\|. 
    \end{equation*}
    As $F$ was chosen arbitrarily, and since $\|\varphi_{\eta_X, X}\| \leq \|\eta_X\|\|\Phi_X\| \leq 1$, we take the supremum as $X \in \bD_Q$ above to conclude that the inequality $\geq$ holds in \eqref{eqn:pre-dual.norms.dual}, as required.

    \underline{Proof of (2)}: Let $F_\iota=\sum_{i,j=1}^k E_{ij}\otimes f_\iota^{(ij)}$ be a net in the closed unit ball of $M_k(H^\infty(\bD_Q))$. Then, for each $1\leq i,j\leq k$, $(f_\iota^{(ij)})$ is a net in the closed unit ball of $H^\infty(\bD_Q)$. Hence, using the fact that the closed unit ball of $H^\infty(\bD_Q)$ is $\sigma(H^\infty(\bD_Q), \cX(\bD_Q))$ compact (by Theorem \ref{thm:Sampat.Shalit.weak-*} and the Banach-Alaoglu Theorem), we can find a sub-net $(F_{\iota_\lambda})$ such that for every $1\leq i,j\leq n$, $f_{\iota_\lambda}^{(ij)} \xrightarrow{w^*} f^{(ij)}$ for some $f^{(ij)}$ in the closed unit ball of $H^\infty(\bD_Q)$. It is then straightforward to check, using the definition of $\cX_k(\bD_Q)$, that
    \begin{equation*}
        F_{\iota_\lambda}=\sum_{i,j=1}^k E_{ij}\otimes f_{\iota_\lambda}^{(ij)} \to \sum_{i,j=1}^k E_{ij}\otimes f^{(ij)},
    \end{equation*}
    in $\sigma(M_k(H^\infty(\bD_Q)),\cX_k(\bD_Q))$, and the limit lies in the closed unit ball of $M_k(H^\infty(\bD_Q))$ (using (1)). This completes the proof of (2), and so $\cX_k(\bD_Q)$ is a pre-dual of $M_k(H^\infty(\bD_Q))$.

    Finally, the proof of continuity of the matrix point-evaluations and of the uniqueness of the pre-dual works exactly as in the proof of Theorem \ref{thm:Sampat.Shalit.weak-*}, so we will not repeat it here. 
\end{proof}

\begin{proposition}\label{prop:weak-*.topology.properties}
The weak-* topology on $M_k(H^\infty(\bD_Q))$ has the following properties.
    \begin{enumerate}
        \item A net $(F_\iota) \subset M_k(H^\infty(\bD_Q))$ converges weak-* to some $F\in M_k(H^\infty(\bD_Q))$ if and only if every entry of $(F_\iota)$ converges weak-* in $H^\infty(\bD_Q)$ to the corresponding entry of $F$;
        \item The weak-* topology on $M_k(H^\infty(\bD_Q))$ coincides with the topology of pointwise convergence on bounded sets;
        \item $M_k(\bC \langle Z \rangle)$ is weak-* dense in $M_k(H^\infty(\bD_Q))$;
        \item  Fix $F\in M_k(H^\infty(\bD_Q))$. The linear maps of left multiplication by $F$, $G\mapsto FG$, and of right multiplication by $F$, $G\mapsto GF$, are both weak-* continuous on $M_k(H^\infty(\bD_Q))$.
    \end{enumerate}
\end{proposition}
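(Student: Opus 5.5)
We reduce everything to the scalar case $k = 1$, where Theorem \ref{thm:Sampat.Shalit.weak-*} and the structure of $\cX(\bD_Q)$ from \cite{Sampat-Shalit-Weak-star} are available. The key tool is the identification $\cX_k(\bD_Q) = M_k^* \otimes \cX(\bD_Q)$ from Proposition \ref{prop:uniq.pre-dual.matrix.case}. Every functional in $\cX_k(\bD_Q)$ is a finite sum $\varphi = \sum_{i,j} e_{ij} \otimes \varphi_{ij}$, so that
\begin{equation*}
    \varphi(F) = \sum_{i,j} \varphi_{ij}(F_{ij}), \qquad F = (F_{ij}).
\end{equation*}

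\textbf{(1) Entrywise convergence.} If each entry converges weak-* in $H^\infty(\bD_Q)$, then every such finite sum converges, so $F_\iota \to F$ weak-*. Conversely, testing against $e_{ij} \otimes \psi$ with $\psi \in \cX(\bD_Q)$ isolates the $(i,j)$ entry. Hence the two notions of convergence coincide.

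\textbf{(2) Pointwise convergence on bounded sets.} By (1), it suffices to prove this for $k = 1$, since entries of a bounded matrix net are bounded and $\|F(X)\|$ is controlled by its entries.
\begin{enumerate}
    \item If $f_\iota \to f$ weak-*, then pointwise convergence follows, because each $\varphi_{\eta, X}$ lies in $\cX(\bD_Q)$.
    \item Suppose $(f_\iota)$ is bounded and converges pointwise. Then $\varphi(f_\iota) \to \varphi(f)$ for $\varphi$ in $\spn\{\varphi_{\eta,X}\}$. By boundedness and a standard $\varepsilon/3$ argument, this extends to the norm closure $\cX(\bD_Q)$.
    \item To get the equality of topologies on bounded sets (not just of convergent nets), use that on a bounded set both topologies are generated by the same dense family of functionals.
    \item Pointwise limits of bounded NC functions are NC and bounded, so $f$ lies in $H^\infty(\bD_Q)$.
\end{enumerate}

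\textbf{(3) Density of matrix free polynomials.} Given $F$, consider the Cesàro means of the homogeneous expansions of its entries. More precisely, let
\begin{equation*}
    \sigma_N(F) := \sum_{j \le N} \Big(1 - \tfrac{j}{N+1}\Big) F_j .
\end{equation*}
\begin{enumerate}
    \item Boundedness: $\sigma_N(F)(X)$ is the Fejér average $\int F(e^{i\theta}X)\,K_N(\theta)\,d\theta$, with $K_N \ge 0$ and $\int K_N = 1$. Since $e^{i\theta}X \in \bD_Q$, we get $\|\sigma_N(F)\|_Q \le \|F\|_Q$; this also works at the matrix level.
    \item Pointwise convergence: the homogeneous series converges absolutely on $\bD_Q$, since $X \in r\bD_Q$ for some $r < 1$. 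Hence $\sigma_N(F)(X) \to F(X)$.
    \item By (2), $\sigma_N(F) \to F$ weak-*.
\end{enumerate}
An alternative is the dilations $F^{(r)}$, which converge pointwise and boundedly. These are not polynomials, but each $F^{(r)}$ is a norm limit of polynomials on $\bD_Q$, which suffices.

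\textbf{(4) Continuity of multiplication.} This is the main technical point. Bounded pointwise convergence makes left and right multiplication continuous on bounded sets, since $F(X)G_\iota(X) \to F(X)G(X)$ and $\|FG_\iota\| \le \|F\|\,\|G_\iota\|$. To pass to full weak-* continuity, I would apply the Krein--Smulian theorem: a linear map between dual spaces is weak-* continuous if it is weak-* continuous on bounded sets. This is a consequence of Banach--Dieudonné, via the usual criterion that the kernel of $\varphi \circ M_F$ meets balls in weak-* closed sets.

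Alternatively, one can argue directly at the level of the predual. For $\eta$ and $X$, the functional $G \mapsto \eta(F(X)G(X))$ equals $\varphi_{\eta', X}$ with $\eta' = \eta(F(X)\,\cdot\,)$. So the pre-adjoint maps the spanning set of $\cX_k(\bD_Q)$ into $\cX_k(\bD_Q)$, and by boundedness it maps the closure into itself. This gives weak-* continuity directly, and I would present this route as the primary argument.
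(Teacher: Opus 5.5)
Your proposal is correct and follows the paper's proof in substance. Part (1) uses the decomposition $\varphi=\sum_{i,j}e_{ij}\otimes\varphi_{ij}$ of elements of $\cX_k(\bD_Q)$, and part (4) uses the pre-adjoint observation that $G\mapsto\eta(F(X)G(X))$ is again the point-evaluation functional $\varphi_{\eta(F(X)\,\cdot\,),X}$, extended to the norm closure by boundedness; you do this directly at the matrix level, whereas the paper first reduces to $k=1$ via (1). The only real difference is that for (2) and (3) you give self-contained arguments (density of $\spn\{\varphi_{\eta,X}\}$ plus uniform boundedness, and Fej\'er means or dilations of the homogeneous expansion), where the paper cites \cite{Sampat-Shalit-Weak-star} for the scalar case and transfers it entrywise via (1). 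Your Krein--Smulian alternative for (4) is also valid but not needed.
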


\begin{proof}
    \begin{enumerate}
        \item This follows from a straightforward argument and the definition of $\cX_k(\bD_Q)$.
        \item As observed in \cite{Sampat-Shalit-Weak-star}, a bounded net $(f_\iota)$ in $H^\infty(\bD_Q)$ converges weak-* to $f\in H^\infty(\bD_Q)$ if and only if $(f_\iota)$ converges to $f$ pointwise on $\bD_Q$. (1) then implies that the exact same statement holds true for $M_k(H^\infty(\bD_Q))$ and its weak-* topology.
        \item For $k=1$, this follows from \cite[Proposition 3.4 (2)]{Sampat-Shalit-Weak-star}. A straightforward application of (1) then shows that the statement holds for all $k\in\bN$.
        \item By (1), it suffices to prove the claim for $k=1$. To this end, for every $\varphi \in \cX(\bD_Q)$, we define the bounded linear functional $\varphi^F$ by 
        \begin{equation*}
        \varphi^F:H^\infty(\bD_Q) \to \C,\quad \varphi^F(G):=\varphi(FG).
        \end{equation*}
        We will prove that $\varphi^F\in \cX(\bD_Q)$ for every $\varphi \in \cX(\bD_Q)$, from which it follows that left multiplication by $F$ is weak-* continuous.

        First, consider $\varphi_{\eta,X} \in \cX(\bD_Q)$ for some $X\in \bD_Q(n)$ and $\eta\in M_n^*$. Then, for every $G\in H^\infty(\bD_Q)$ we have
        \begin{equation}\label{eq:varphi.power.F}
        \varphi_{\eta,X}^F(G)=\varphi_{\eta,X}(FG)=\eta(F(X)G(X))=\varphi_{\eta(F(X)\cdot),X}(G).
        \end{equation}
        Observe that $\varphi_{\eta(F(X)\cdot),X}\in \cX(\bD_Q)$. Indeed, $A\mapsto \eta(F(X)A)$ clearly defines an element of $M_n^*$. Therefore, $\varphi_{\eta,X}^F \in \cX(\bD_Q)$ holds using \eqref{eq:varphi.power.F}. From this it immediately follows that $\varphi^F\in \cX(\bD_Q)$ for all
        \begin{equation*}
            \varphi\in \mathrm{span}\{\varphi_{\eta,X} \ : \ X\in \bD_Q(n), \ \eta\in M_n^*, \ n\in\bN\}.
        \end{equation*}
        Next, let $\varphi\in \cX(\bD_Q)$ be arbitrary. Then there exists a sequence $(\varphi_n)$ in the linear span above such that $\varphi_n\to  \varphi$ in $(H^\infty(\bD_Q))^*$. A standard argument shows that $\varphi_n^F\to \varphi^F$ in $(H^\infty(\bD_Q))^*$. This implies $\varphi^F\in \cX(\bD_Q)$, as desired. 
    
        The statement about right multiplication by $F$ follows similarly. 
    \end{enumerate}
\end{proof}

\subsection{Weak-* cyclicity}\label{subsec:cyclicity.matrix.poly}

We define the left ideal $\fI_L(F)$ and the right ideal $\fI_R(F)$ of any given $F \in M_k(H^\infty(\bD_Q))$ as
\begin{align*}
    \fI_L(F) &:= \{ G F \ : \ G \in M_k(H^\infty(\bD_Q)) \}, \\
    \fI_R(F) &:= \{ F G \ : \ G \in M_k(H^\infty(\bD_Q)) \}.
\end{align*}
We then use the notation
\begin{equation*}
    [F]_L := \overline{\fI_L(F)}^{w^*} \qand [F]_R := \overline{\fI_R(F)}^{w^*}
\end{equation*}
respectively for the \emph{left} and \emph{right invariant subspaces} generated by $F$.

\begin{definition}\label{def:cyclicity}
    An NC function $F \in M_k(H^\infty(\bD_Q))$ is said to be \emph{left} (resp. \emph{right}) \emph{cyclic} if $[F]_L = M_k(H^\infty(\bD_Q))$ (resp. $[F]_R = M_k(H^\infty(\bD_Q))$).
\end{definition}

\begin{lemma}\label{lem:cyclic.iff.identity}
    $F \in M_k(H^\infty(\bD_Q))$ is left (resp. right) cyclic if and only if $I\in [F]_L$ (resp. $I\in [F]_R$).
\end{lemma}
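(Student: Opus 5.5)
One direction is immediate: if $[F]_L = M_k(H^\infty(\bD_Q))$, then in particular the constant function $I$ lies in $[F]_L$. The substance is the converse, and I will argue it only for the left case; the right case is symmetric, with left multiplication replaced by right multiplication.

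Assume $I \in [F]_L$. I first show that $[F]_L$ is itself a left ideal. Fix $G \in M_k(H^\infty(\bD_Q))$. By Proposition \ref{prop:weak-*.topology.properties}(4), the map $H \mapsto GH$ is weak-* continuous. It sends $\fI_L(F)$ into itself, since $G(G'F) = (GG')F$. A continuous map carries the closure of a set into the closure of its image, so it maps $[F]_L = \overline{\fI_L(F)}^{w^*}$ into $[F]_L$. Hence $G[F]_L \subseteq [F]_L$ for every $G$.

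Now apply this with the element $I \in [F]_L$. For any $G \in M_k(H^\infty(\bD_Q))$ we get $G = G\cdot I \in [F]_L$. Therefore $[F]_L = M_k(H^\infty(\bD_Q))$, i.e., $F$ is left cyclic.

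For the right case, use right multiplication $H \mapsto HG$, which is weak-* continuous by the same Proposition \ref{prop:weak-*.topology.properties}(4) and maps $\fI_R(F)$ into itself. The same closure argument shows $[F]_R$ is a right ideal, and then $G = I\cdot G \in [F]_R$ for all $G$.

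The only point that is not purely formal is the weak-* continuity of the multiplication maps. That is already supplied by Proposition \ref{prop:weak-*.topology.properties}(4), so I expect no real obstacle.
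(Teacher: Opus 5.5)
Your proof is correct and follows essentially the same approach as the paper: the trivial direction, then weak-* continuity of left (resp.\ right) multiplication from Proposition \ref{prop:weak-*.topology.properties}(4) to move $I \in [F]_L$ to an arbitrary $G = G \cdot I \in [F]_L$. The paper phrases this through a net $G_\iota F \xrightarrow{w^*} I$, which is equivalent to your closure-of-image formulation.
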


\begin{proof}
    If $F$ is left cyclic, then $I\in [F]_L$ by definition.
    
    Conversely, if $I\in [F]_L$, then there is a net $(G_\iota)\subset M_k(H^\infty(\bD_Q))$ such that $G_\iota F \xrightarrow{w^*} I$. Since left multiplication is weak-* continuous by Proposition \ref{prop:weak-*.topology.properties}(4), it follows that $M_k(H^\infty(\bD_Q))=[F]_L$.

    The proof for right cyclicity is similar.
\end{proof}

\begin{remark}
    As noted in \cite{Pascoe-example}, there exists an NC function $F$ such that
    \begin{enumerate}
        \item $F$ is analytic on $s \fB_d$ in the uniform topology for all $s > 1$, and
        \item $F$ is unbounded on $\fB_d$.
    \end{enumerate}
    Therefore, the conclusion of the following lemma is non-trivial in the NC setup, whereas the commutative counterpart is rather easy to check.
\end{remark}

\begin{lemma}\label{lem:boundedness.of.P^{(r)}}
    Let $P\in M_k(\bC \langle Z \rangle)$ be $Q$-stable. Then $(P^{(r)})^{-1}\in M_k(H^\infty(\bD_Q))$ for all $r<1$.
\end{lemma}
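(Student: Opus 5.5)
Reduce to atoms and linear pencils, and show that $(P^{(r)})^{-1}$ is bounded on all of $\bD_Q$ by an explicit constant.

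\emph{Step 1 (atomic factorization).} Since $P$ is $Q$-stable, it is not a zero divisor. By the result of Cohn recalled in Section \ref{subsec:stab.lin}, $P = P_1 \cdots P_l$ with each $P_j$ a $Q$-stable atom. Then
\begin{equation*}
    (P^{(r)})^{-1} = (P_l^{(r)})^{-1} \cdots (P_1^{(r)})^{-1}
\end{equation*}
pointwise on $\bD_Q$. A finite product of bounded NC functions is again a bounded NC function, so it suffices to treat a single $Q$-stable atom. As in the proof of Theorem \ref{mainthm:NGN.Q-stable.atom}, I may also normalize so that $P(0) = I$, replacing $P$ by $P(0)^{-1}P$. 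This is legitimate because $P(0)$ is invertible, as $0 \in \bD_Q$.

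\emph{Step 2 (linearization and similarity).} Repeat the opening of the proof of Theorem \ref{mainthm:NGN.Q-stable.atom}. By Lemma \ref{lem:Higman's.linearization.trick}, $P \sim L_A$ with $A$ irreducible. Stability of $P$ gives stability of $L_A$, hence $\rho_{Q^\circ}(A) \le 1$ by Theorem \ref{thm:spec.rad.dom.pencil-SS}. Theorem \ref{thm:spec.rad.properties-SS} then produces $S$ with $B = S^{-1}AS \in \overline{\bD_Q^\circ}$. This yields
\begin{equation*}
    \bmat{P(rX)^{-1} & 0 \\ 0 & I} = G(rX)^{-1} L_{\widetilde B}(rX)^{-1} F(rX)^{-1} \qquad (X \in \bD_Q),
\end{equation*}
where $F, G \in GL_m(\bC\langle Z\rangle)$.

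\emph{Step 3 (bound each factor).} The factors $F^{-1}$ and $G^{-1}$ are matrix free polynomials, so they are bounded on $\bD_Q$. The pencil factor is bounded using the Neumann series bound \eqref{eqn:lem.NGN.linear.pencil.proof.1} from Lemma \ref{lem:NGN.linear.pencil}: since $\widetilde B \in \overline{\bD_Q^\circ}$, \eqref{eqn:polar.dual.norm.prop} gives $\|r(\widetilde B \otimes X)\| \le r$, and hence
\begin{equation*}
    \|L_{\widetilde B}(rX)^{-1}\| \le \frac{1}{1-r}.
\end{equation*}
Consequently
\begin{equation*}
    \|P(rX)^{-1}\| \le \frac{\|F^{-1}\|_Q \, \|G^{-1}\|_Q}{1-r}
\end{equation*}
uniformly in $X \in \bD_Q$. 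It remains to see that $X \mapsto P(rX)^{-1}$ is an NC function on $\bD_Q$. It is graded and respects direct sums and similarities, because $X \mapsto P(rX)$ does and matrix inversion preserves these properties. Together with the uniform bound, this shows $(P^{(r)})^{-1} \in M_k(H^\infty(\bD_Q))$.

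\emph{Main obstacle.} The delicate point is that stability alone only gives $\rho_{Q^\circ}(A) \le 1$, not a norm bound, and a reducible $A$ with spectral radius $1$ can still be fine for a fixed $r < 1$ but need not be similar to a contraction. The remark on \cite{Pascoe-example} shows that some control of this kind is genuinely needed. Irreducibility of the atom is exactly what lets Theorem \ref{thm:spec.rad.properties-SS}(2) put $A$ into $\overline{\bD_Q^\circ}$ by a single similarity, and this is why I pass through the atomic factorization rather than linearizing $P$ directly.
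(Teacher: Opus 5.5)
Your argument is correct, but it takes a heavier route than the paper. The paper does not factor $P$ into atoms at all. It normalizes $P(0)=I$ and uses Lemma \ref{lem:Higman's.linearization.trick} to get $P\sim L_A$ for a possibly \emph{reducible} pencil. It then inverts the dilated identity $\bsmallmat{P^{(r)} & 0 \\ 0 & I} = F^{(r)} L_B^{(r)} G^{(r)}$ and bounds $(L_B^{(r)})^{-1}$ by noting that $L_B^{(r)}$ is invertible on the strictly larger ball $\frac{1}{r}\bD_Q$. Then \cite[Corollary 3.2]{Shalit-Shamovich-spec-rad} gives boundedness on $\bD_Q$. Equivalently, $\rho_{Q^\circ}$ is homogeneous, so $\rho_{Q^\circ}(rA)=r\rho_{Q^\circ}(A)\le r<1$, and Theorem \ref{thm:spec.rad.properties-SS}(1) moves $rA$ into the \emph{open} ball $\bD_Q^\circ$ by a similarity. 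No irreducibility is needed; this is also how the proof of Theorem \ref{mainthm:cyc.NC.rationals} bounds $L_A^{-1}$.

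So the obstacle in your last paragraph is not actually present for this lemma. A reducible $A$ with $\rho_{Q^\circ}(A)=1$ only causes trouble uniformly as $r\to 1^-$, not for a fixed $r$. In Example \ref{example:Jordan.block.pencil}, for instance, the similarity for $rA$ exists but its condition number blows up as $r\to1^-$.

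What your detour through Cohn's factorization and Theorem \ref{thm:spec.rad.properties-SS}(2) buys is a quantitative estimate $\|(P^{(r)})^{-1}\|_Q\le C(1-r)^{-l}$, where $l$ is the number of atoms and $C$ is independent of $r$. That is more than the lemma asks for, and it is essentially the machinery of Theorem \ref{mainthm:NGN.Q-stable.atom}. The paper's route is shorter and avoids the atomic factorization and irreducibility altogether. It also applies verbatim to any stable pencil, or any NC rational function whose domain contains a larger ball, but it only yields an $r$-dependent constant.
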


\begin{proof}
    We can without loss of generality assume that $P(0)=I$. By Lemma \ref{lem:Higman's.linearization.trick}, $P\sim L_A$ for some linear pencil $L_A$. Thus there exist $F,G\in GL_l(\bC \langle Z \rangle)$ such that
    \begin{equation*}
   \bmat{P(X) & 0 \\ 0 & I} = F(X) \underbrace{\bmat{L_A(X) & 0 \\ 0 & I}}_{=: \ L_{B}(X)}G(X) \foral X \in \bD_Q.
\end{equation*}
Note that since $P$ is $Q$-stable, all matrices in the above equation are invertible. Hence, dilating by $r<1$ and taking the inverse on both sides of the equation yields
\begin{equation*}
    \big\|\big(P^{(r)}\big)^{-1}\big\|_Q \leq \big\|\big(G^{(r)}\big)^{-1}\big\|_Q \big\|\big(L_{B}^{(r)}\big)^{-1}\big\|_Q\big\|\big(F^{(r)}\big)^{-1}\big\|_Q.
\end{equation*}
Since, $\frac{1}{r}\bD_Q\subset \dom (L_{B}^{(r)})^{-1}$, we get $\big\|\big(L_{B}^{(r)}\big)^{-1}\big\|_Q<\infty$ from \cite[Corollary 3.2]{Shalit-Shamovich-spec-rad}. Using the fact that $(G^{(r)})^{-1}, (F^{(r)})^{-1} \in M_l(\bC \langle Z \rangle)$ we therefore conclude that $(P^{(r)})^{-1} \in M_k(H^\infty(\bD_Q))$.
\end{proof}

We are now sufficiently prepared to prove Theorem \ref{mainthm:cyc.matrix.free.poly}.

\subsection*{\texorpdfstring{Proof of Theorem \ref{mainthm:cyc.matrix.free.poly}}{Proof of Theorem B}} As before, we only prove the statement for left cyclicity. 

Suppose $P \in M_k(\bC \langle Z \rangle)$ is left cyclic. By definition, there exists a net $(G_\iota) \subset M_k(H^\infty(\bD_Q))$ such that $G_\iota P \xrightarrow{w^*} I$. In particular, the weak-* continuity of the matrix point-evaluation map $\Phi_X$ implies $G_\iota(X) P(X) \to I$ for all $X \in \bD_Q$. The continuity of the determinant then shows $\det G_\iota(X) \det P(X) \to 1$ for all $X\in \bD_Q$, so that $P$ is $Q$-stable.

Conversely, suppose $P \in M_k(\bC \langle Z \rangle)$ is $Q$-stable and let $P = P_1P _2 \dots P_l$ be its atomic factorization. Clearly, each $P_j$ is also $Q$-stable. Let $(r_n) \subset (0,1)$ be such that $r_n \to 1^-$ and observe how for every $n$, $P_1^{(r_n)}$ is $Q$-stable and $(P_1^{(r_n)})^{-1} \in M_k(H^\infty(\bD_Q))$ by Lemma \ref{lem:boundedness.of.P^{(r)}}. Next, let $G\in M_k(H^\infty(\bD_Q))$ be arbitrary and consider the sequence $(H_n)$ in $M_k(H^\infty(\bD_Q))$ given by 
\begin{equation*}
    H_n:=G(P_1^{(r_n)})^{-1}P_1P_2\dots P_l. 
\end{equation*}
By Theorem \ref{mainthm:NGN.Q-stable.atom}, $(H_n)$ is a bounded sequence. Moreover, since $((P_1^{(r_n)})^{-1}P_1)$ converges pointwise to $I$, $(H_n)$ converges pointwise to $GP_2\dots P_l$. Therefore, $H_n \xrightarrow{w^*} GP_2\dots P_l$ by Proposition \ref{prop:weak-*.topology.properties}(2). As $G$ was chosen arbitrarily, we get
\begin{equation*}
    \fI_L(P_2\dots P_l)\subset [P_1P_2\dots P_l]_L,
\end{equation*}
which in turn implies 
\begin{equation*}
    [P_2\dots P_l]_L\subset [P_1P_2\dots P_l]_L.
\end{equation*}
Continuing this line of reasoning, we obtain 
\begin{equation*}
    [I]_L\subset [P_l]_L\subset [P_{l-1}P_l]_L\subset \dots \subset [P_2\dots P_l]_L\subset [P_1P_2\dots P_l]_L.
\end{equation*}
Hence, $I\in [P]_L$, so that $P$ is left cyclic by Lemma \ref{lem:cyclic.iff.identity}. \hfill \qedsymbol

\subsection{Cyclic polynomials in the free Hardy space}\label{subsec:cyclic.poly.free.Hardy}

As mentioned in the introduction, Theorem \ref{mainthm:cyc.matrix.free.poly} allows us to recover a known result about the cyclicity of free polynomials in the free Hardy space (see \cite{AroraPhD, Arora-Augat-Jury-Sargent-free-OPA, JMS-ratFock}). In the same vein, in Section \ref{sec:cyclicity.rational} we recover a known result about cyclicity of NC rational functions in the free Hardy space. 

Recall that the free Hardy space $\bH_d^2$ is defined to be the Hilbert space of all power series in $d$ freely non-commuting variables with square-summable (scalar) coefficients: 
\begin{equation*}
    \bH_d^2=\left\{f(Z)=\sum_{w\in \F}a_w Z^w: \sum_{w\in \F}|a_w|^2<\infty\right\},
\end{equation*}
under the inner-product induced by the $\ell^2$ inner-product of the coefficients. One can show that for every $f\in \bH_d^2$ and $X$ in the $d$-dimensional unit row ball $\fB_d$, $f(X)$ converges (see \cite[Theorem 1.1]{Pop-NC-appl}), so that every element of $\bH_d^2$ defines an NC function on $\fB_d$. Equipping $M_k$ with the Hilbert-Schmidt inner product allows us to consider the Hilbert space $M_k(\bH_d^2)=M_k\otimes \bH_d^2$, which we interpret as a matrix-valued version of the free Hardy space. Every element $F$ of $M_k(\bH_d^2)$ can we written as $F(Z)=\sum_w A_w Z^w$ for a family of matrices $A_w$ in $M_k$. If $G(Z)=\sum_w B_w Z^w$ is another element of $M_k(\bH_d^2)$, then the inner product of $F$ with $G$ is given by $\sum_w \Tr({B_w}^* A_w)$. For any given $F \in M_k(\bH^2_d)$ and $X\in\fB_d$, $F(X)$ is to be interpreted as $F(X)=\sum_w A_w\otimes X^w$. In this way, every element of $M_k(\bH_d^2)$ defines a matrix-valued NC function on $\fB_d$.

One can show that $\bH_d^2$ is an NC reproducing kernel Hilbert space (NCRKHS) in the sense of Ball, Marx and Vinnikov \cite{BMV-NCRKHS}, meaning that for every point $X\in \fB_d$, the linear map $f \mapsto f(X)$ is continuous from $\bH_d^2$ to the space of square matrices of appropriate size. Clearly, for every $X\in \fB_d$, this implies that $F \mapsto F(X)$ also defines a continuous linear map from $M_k(\bH_d^2)$ to the space of square matrices of appropriate size. To every NCRKHS one can naturally associate an algebra, called its \emph{(left) multiplier algebra}, consisting of all NC functions on the underlying domain that multiply (on the left) the NCRKHS into itself. One can show that the left multiplier algebra of $\bH_d^2$ is completely isometrically isomorphic to $H^\infty(\fB_d)$; see \cite[Theorem 3.1]{SSS-algebras} and \cite[Theorem 3.1]{Pop-NC-appl}. Therefore, $H^\infty(\fB_d)\subset \bH_d^2$, and there is a natural weak operator topology (WOT) on $H^\infty(\fB_d)$. In fact, Davidson and Pitts prove that the weak operator topology and the weak-* topology on $H^\infty(\fB_d)$ coincide; see \cite[Corollary 2.12]{Davidson-Pitts-inv}.

The above described identification of $H^\infty(\fB_d)$ with the left multiplier algebra of $\bH_d^2$ is what makes it possible to use our results to say something about cyclicity in $\bH_d^2$. To this end, we define for any given $F\in M_k(\bH_d^2)$,
\begin{align*}
    \fI^2(F) := \{ G F \ : \ G \in M_k(H^\infty(\fB_d)) \} \qand  [F]^2 := \overline{\fI^2(F)}^{\|\cdot\|}.
\end{align*}
We then say that $F$ is \emph{(left) cyclic} in $M_k(\bH_d^2)$ if $[F]^2=M_k(\bH_d^2)$. In this setting it is straightforward to show that cyclicity of $F$ is equivalent to $I\in [F]^2$. 

\begin{corollary}\label{cor:cyclicity.free.Hardy.space}
    $P \in M_k(\bC \langle Z \rangle)$ is cyclic in $M_k(\bH_d^2)$ if and only if $P$ is $\fB_d$-stable.
\end{corollary}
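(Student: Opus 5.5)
The plan is to transfer Theorem \ref{mainthm:cyc.matrix.free.poly} for $\bD_Q = \fB_d$ to the Hilbert space $M_k(\bH_d^2)$ by relating weak-* convergence in $M_k(H^\infty(\fB_d))$ to weak convergence in $M_k(\bH_d^2)$.

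\textbf{Necessity.} Suppose $P$ is cyclic in $M_k(\bH_d^2)$. Then there are $G_n \in M_k(H^\infty(\fB_d))$ with $G_n P \to I$ in norm. Since evaluation $F \mapsto F(X)$ is continuous on $M_k(\bH_d^2)$ for each $X \in \fB_d$, we get $G_n(X)P(X) \to I$. Taking determinants, $\det G_n(X)\det P(X)\to 1$, so $\det P(X) \neq 0$. This is the same argument as in the proof of Theorem \ref{mainthm:cyc.matrix.free.poly}.

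\textbf{Sufficiency.} Let $P$ be $\fB_d$-stable. By Theorem \ref{mainthm:cyc.matrix.free.poly}, $I \in [P]_L$, the weak-* closure of $\fI_L(P)$ in $M_k(H^\infty(\fB_d))$. Rather than work with nets, I would run the proof of Theorem \ref{mainthm:cyc.matrix.free.poly} again inside $\bH^2$. The key claim is:
\begin{quote}
If $(H_n)$ is a bounded sequence in $M_k(H^\infty(\fB_d))$ converging pointwise on $\fB_d$ to $H$, then $H_n \to H$ weakly in $M_k(\bH_d^2)$.
\end{quote}
Given the claim, the argument proceeds as follows.
\begin{enumerate}
    \item Take the atomic factorization $P = P_1\cdots P_l$ and any $G \in M_k(H^\infty(\fB_d))$.
    \item Set $H_n = G(P_1^{(r_n)})^{-1}P_1$. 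This is bounded in $M_k(H^\infty(\fB_d))$ by Theorem \ref{mainthm:NGN.Q-stable.atom}, and lies in $M_k(H^\infty(\fB_d))$ by Lemma \ref{lem:boundedness.of.P^{(r)}}.
    \item Since $H_n \to G$ pointwise, the claim gives $H_n \to G$ weakly in $\bH^2$, hence $H_n P_2\cdots P_l \to GP_2\cdots P_l$ weakly.
    \item A norm-closed subspace is weakly closed, so $GP_2\cdots P_l \in [P]^2$. Thus $[P_2\cdots P_l]^2 \subseteq [P]^2$.
    \item Iterating down the factorization gives $I \in [P]^2$, which is cyclicity.
\end{enumerate}
Step 3 uses that right multiplication by the polynomial $P_2\cdots P_l$ is bounded on $M_k(\bH_d^2)$. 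This holds because right multiplication by monomials $Z^w$ is an isometry on $\bH_d^2$ coefficient sequences, and $k\times k$ matrices of such operators are bounded. I would therefore phrase everything via right multiplication by polynomials, which is safe.

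\textbf{Proof of the claim (main obstacle).} Use the identification of $H^\infty(\fB_d)$ with the left multiplier algebra, which is completely isometric. This gives $\|H_n\|_{M_k(\bH^2)} = \|H_n I\|_{\bH^2} \le \|H_n\|_{\fB_d}\,\|I\|$, so $(H_n)$ is bounded in $M_k(\bH_d^2)$. In a Hilbert space, a bounded sequence converges weakly to $H$ as soon as $\langle H_n, K\rangle \to \langle H, K\rangle$ for $K$ in a set with dense span.

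It is enough to show that the NC Taylor coefficients of $H_n$ converge to those of $H$. Test against the monomials $E_{ij}Z^w$, which pair with coefficients, and use boundedness to reach all $K$. Coefficient convergence follows from pointwise convergence together with uniform boundedness. For example, each coefficient is obtained from evaluations at a suitable nilpotent (shift-type) point $X \in r\fB_d$, via the homogeneous-component projections. Concretely, evaluating at a weighted left-shift tuple on a truncated Fock space recovers all coefficients up to a given length as entries of $H_n(X)$.

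Alternatively, one can invoke the Davidson--Pitts result that WOT and weak-* agree on $H^\infty(\fB_d)$: weak-* convergence of $H_n$ (by Proposition \ref{prop:weak-*.topology.properties}(2)) implies WOT convergence, hence $\langle H_n 1, K\rangle \to \langle H 1, K\rangle$. I would present this second route as the main argument, since it cites an established fact, and mention the coefficient argument as a check.
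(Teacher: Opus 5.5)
Your proposal is correct, but it is organized differently from the paper's proof.

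\textbf{The paper's route.} The paper treats Theorem \ref{mainthm:cyc.matrix.free.poly} as a black box. It takes a net $(H_\iota)\subset M_k(H^\infty(\fB_d))$ with $H_\iota P \to I$ weak-*. This net comes from an abstract weak-* closure and need not be bounded. The paper converts it, entrywise, into weak convergence in $M_k(\bH^2_d)$ via the Davidson--Pitts identification of the WOT and weak-* topologies on $H^\infty(\fB_d)$. It then closes with the same observation you use: the weak and norm closures of the subspace $\fI^2(P)$ agree.

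\textbf{Your route.} You rerun the factorization argument of Theorem \ref{mainthm:cyc.matrix.free.poly} inside $M_k(\bH^2_d)$. Every approximant $G(P_j^{(r_n)})^{-1}P_j\cdots P_l$ lies in $\fI^2(P_j\cdots P_l)$ and forms a bounded sequence by Theorem \ref{mainthm:NGN.Q-stable.atom}. So you only ever need that bounded pointwise convergence implies weak convergence in $M_k(\bH^2_d)$. Your truncated-shift coefficient argument proves this from scratch. Hence your route needs neither Davidson--Pitts nor the identification of the paper's canonical pre-dual with the Davidson--Pitts one. It is worth presenting that coefficient argument as the main argument rather than as a check.

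You also do not need right multiplication by $P_2\cdots P_l$ to be bounded. The sequence $H_nP_2\cdots P_l$ is itself bounded in $M_k(H^\infty(\fB_d))$ and converges pointwise, so your claim applies to it directly.

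\textbf{What each buys.} The paper's route gives modularity. It is really a transfer principle: any left cyclic element of $M_k(H^\infty(\fB_d))$ is cyclic in $M_k(\bH^2_d)$. This is exactly what is reused verbatim for Corollary \ref{cor:cyclic.NC.rat.Hardy}. There, cyclicity in $M_k(H^\infty(\fB_d))$ comes from the realization argument of Theorem \ref{mainthm:cyc.NC.rationals} via a net, not from bounded sequences. Your argument is tied to the atomic factorization of a polynomial and would need extra work in that setting.
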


\begin{proof}

     If $P$ is left cyclic, then the proof of $P$ being $\fB_d$-stable proceeds exactly as the first part of the proof of Theorem \ref{mainthm:cyc.matrix.free.poly}, using the fact that point evaluations define bounded linear functionals on $M_k(\bH_d^2)$.

     Conversely, assume that $P$ is $\fB_d$-stable. Since $P\in M_k(H^\infty(\fB_d))$, Theorem \ref{mainthm:cyc.matrix.free.poly} shows that $P$ is left cyclic in $M_k(H^\infty(\fB_d))$. By definition we can find a net $(H_\iota) \subset M_k(H^\infty(\fB_d))$ such that $H_\iota P \xrightarrow{w^*} I$.
     Applying Proposition \ref{prop:weak-*.topology.properties}(1) and the fact that the weak operator topology coincides with the weak-* topology on $H^\infty(\fB_d)$, it is straightforward to check that each entry of $(H_\iota P)$ converges weakly to the corresponding entry of the constant NC function $I$ in $\bH^2_d$. It then follows from a standard argument that $H_\iota P \xrightarrow{w} I$ in $M_k(\bH^2_d)$.
     %In particular, every entry of the net $H_\iota P$ converges weak-* in $H^\infty(\fB_d)$ to the corresponding entry of $I$ (see Proposition \ref{prop:weak-*.topology.properties}(1)). As noted before the lemma statement, this in turn implies weak convergence in $\bH_d^2$ of every entry. Next, from a straightforward calculation we obtain weak convergence of $H_\iota P$ to $I$ in $M_k(\bH_d^2)$.
     Therefore, it follows that $I\in \overline{\fI^2(P)}^{\text{w}}$. However, because $\fI^2(P)$ is convex, its weak closure coincides with its norm closure in $M_k(\bH_d^2)$, and so $I\in [P]^2$, as required.
\end{proof}

\section{Cyclicity of NC rational functions}\label{sec:cyclicity.rational}

Following \cite{HMV-NC.rat} and \cite{Volcic-rationals}, an \emph{NC rational expression} in $d$ freely non-commuting variables $Z_1,\dots,Z_d$, is any syntactically valid expression we can build out of NC polynomials in $d$ variables by addition, multiplication and taking inverses. For example, 
\begin{equation*}
    (Z_1Z_2-Z_2Z_1)^{-1}Z_3 \qand Z_1Z_2^{-1}-3(Z_1Z_3^{-1}+Z_2)^{-1},
\end{equation*}
are NC rational expressions. We say that an NC rational expression is \emph{non-degenerate} if there is a tuple $X\in\bM^d$ at which it can be evaluated.
%This means that after plugging $X$ into to NC rational expression, all involved inverses exist.
The domain of a non-degenerate NC rational expressions in $d$ variables is the set of all points in the $d$-dimensional NC universe $\bM^d$ at which the NC rational expression can be evaluated. Next, two non-degenerate NC rational expressions are said to be equivalent if they evaluate to the same matrix at every point in the intersection of their domains. It is readily checked that this defines an equivalence relation. Finally, an \emph{NC rational function} is an equivalence class of non-degenerate NC rational expression, and its domain is the union of the domains of all NC rational expressions in the equivalence class. The NC rational functions form a skew field, denoted by $\fskew$, which is the universal skew field of fractions of the ring of NC polynomials $\bC\langle Z\rangle$; see \cite{Cohn-skew-fields} for background on universal skew fields. 

Before we get to the proof of Theorem \ref{mainthm:cyc.NC.rationals}, note that whenever $\fr$ is an NC rational function such that $s\bD_Q\subset \dom \fr$ for some $s>1$, then $\fr$ will be bounded on every ball $r\bD_Q$ with $r<s$; see \cite[Corollary 3.2]{Shalit-Shamovich-spec-rad}. In particular, in this case we have $\fr\in H^\infty(\bD_Q)$. It should be noted that this is not true for general NC functions, even when $\bD_Q=\fB_d$; see \cite{Pascoe-example}. Our proof of Theorem \ref{mainthm:cyc.NC.rationals} follows the same path as in \cite[Theorem 3.7]{AroraPhD}, which relies on a certain factorization formula (see \eqref{eq:factorization.Arora.PhD} below) for matrices of NC rational functions.

\subsection*{\texorpdfstring{Proof of Theorem \ref{mainthm:cyc.NC.rationals}}{Proof of Theorem C}} %As before, we only prove the statement for left cyclicity.
The $Q$-stability of any given left/right cyclic $\fr$ as in the hypothesis follows exactly as in Theorem \ref{mainthm:cyc.matrix.free.poly}.

To prove sufficiency, assume $\fr = (\fr_{\lambda,\mu})$ is $Q$-stable. For all $1\leq \lambda,\mu\leq d$ there exists a triple $(A_{\lambda,\mu},b_{\lambda,\mu},c_{\lambda,\mu})$, called a \emph{descriptor realization}, where $A_{\lambda,\mu}=(A^{\lambda,\mu}_1,\dots,A^{\lambda,\mu}_d)\in M_{n_{\lambda,\mu}}^d$ and $b_{\lambda,\mu},c_{\lambda,\mu}\in \bC^{n_{\lambda,\mu}}$, such that   
\begin{equation*}
    \dom \fr_{\lambda,\mu}\subset \{X\in \bM^d: L_{A_{\lambda,\mu}}(X)\text{ is invertible}\},
\end{equation*}
and 
\begin{equation*}
    \fr_{\lambda,\mu}(Z)=b_{\lambda,\mu}^* L_{A_{\lambda,\mu}}(Z)^{-1} c_{\lambda,\mu},
\end{equation*}
as functions on $\dom \fr_{\lambda,\mu}$. Here, for $X \in \dom \fr_{\lambda,\mu}$ the above equation is to be interpreted as
\begin{equation*}
    \fr_{\lambda,\mu}(X)=b_{\lambda,\mu}^*\otimes I L_{A_{\lambda,\mu}}(X)^{-1} c_{\lambda,\mu}\otimes I.
\end{equation*}
For a proof of this fact, see for example \cite{Volcic-rationals}. Next, we construct a descriptor realization for $\fr$. For every $1\leq j\leq d$, define
\begin{align*}
    A_j:=\diag(A^{1,1}_j,A^{1,2}_j\dots,A^{1,k}_j,A^{2,1}   _j,A^{2,2}_j,\dots  ,A^{k,1}_j,\dots,A^{k,k}_j).
\end{align*}
Also, let 
\begin{align*}
    b:=\bmat{b_{1,1}& 0 & \dots & 0\\ \vdots & \vdots & \ddots& \vdots \\ b_{1,k}& 0 & \dots & 0\\ 0 & b_{2,1} & \dots & 0\\
    \vdots & \vdots & \ddots&\vdots \\ 0 & b_{2,k} &\dots & 0 \\ \vdots & \vdots &\vdots&\vdots \\ \vdots & \vdots & \vdots & \vdots\\ 0 & 0 &\dots & b_{k,1}\\ \vdots& \vdots &\ddots &\vdots\\ 0 & 0 & \dots & b_{k.k}}, \qquad
    c:=\bmat{c_{1,1}&\dots& 0\\ \vdots& \ddots& \vdots\\ 0&\dots& c_{1,k}\\c_{2,1}&\dots& 0\\ \vdots& \ddots& \vdots\\ 0&\dots& c_{2,k} \\ \vdots & \vdots & \vdots\\ \vdots & \vdots & \vdots\\
    c_{k,1}&\dots& 0\\ \vdots& \ddots& \vdots\\ 0&\dots& c_{k,k}}.
\end{align*}
It is readily checked that
\begin{equation}\label{eq:domain.of.big.pencil}
    \dom \fr= \bigcap_{1\leq \lambda,\mu\leq k}\dom \fr_{\lambda,\mu} \subset \{X\in \bM^d: L_A(X)\text{ is invertible}\},
\end{equation}
and 
\begin{equation}\label{eq:realization.for.r}
    \fr(X)=b^*\otimes I L_A(X)^{-1} c\otimes I \foral X \in \dom \fr.
\end{equation}
Since $s\bD_Q\subset \dom \fr$ by assumption, we get from \eqref{eq:domain.of.big.pencil} that $s\bD_Q$ is contained in the invertibility domain of $L_A(Z)$, so that $\rho_{Q^\circ}(A)\leq \frac{1}{s}<1$ by Theorem \ref{thm:spec.rad.dom.pencil-SS}. Therefore, by Theorem \ref{thm:spec.rad.properties-SS}, $A$ is jointly similar to some $B\in \bD_Q^\circ$ via an invertible matrix $S$ of appropriate size. Hence, for all $X\in \bD_Q$ we get $\|B\otimes X\|\leq \|B\|_{Q^\circ}\|X\|_Q<1$, where $B\otimes X=\sum_j B_j\otimes X_j$, so that we can expand $L_B(X)^{-1}$ as a Neumann series, which yields
\begin{equation*}
    L_A(X)=S\otimes I L_B(X)^{-1} S^{-1}\otimes I=S\otimes I \left(\sum_{n=0}^\infty (B\otimes X)^n\right) S^{-1}\otimes I,
\end{equation*}
and
\begin{equation}\label{eq:inverse.pencil.bounded}
    \|L_A(X)^{-1}\|\leq \kappa(S) \left\|\sum_{n=0}^\infty (B\otimes X)^n\right\|\leq \frac{\kappa(S)}{1-\|B\|_{Q^\circ}}.
\end{equation}
In particular, $L_A(Z)^{-1}\in M_N(H^\infty(\bD_Q))$, where $N=\sum_{1\leq \lambda,\mu\leq d} n_{\lambda,\mu}$.
    
Using (\ref{eq:realization.for.r}), one can check that
\begin{equation}\label{eq:factorization.Arora.PhD}
    \underbrace{\bmat{I & 0 & I\\ c& L_A(Z) & 0\\ 0 & b^* & 0}}_{=: \ \alpha(Z)}=\underbrace{\bmat{I&0&0\\ c& L_A(Z)&0\\ 0&b^*&I}}_{=: \ \beta(Z)}\bmat{I&0&0\\0&I&0\\0&0&\fr(Z)}\underbrace{\bmat{I&0&I\\0&I&-L_A(Z)^{-1}c\\0&0&I}}_{=: \ \gamma(Z)},
\end{equation}
as functions on $s\bD_Q$. Clearly, $\alpha(Z), \beta(Z)\in M_e(\bC \langle Z \rangle)$ and $\gamma(Z)\in  M_e(H^\infty(\bD_Q))$ by \eqref{eq:inverse.pencil.bounded}, where $e=N+2k$. Since $\gamma(Z)$ is an upper uni-triangular matrix in $M_e(H^\infty(\bD_Q))$, it is straightforward to calculate its inverse and check that $\gamma(Z)^{-1}\in M_e(H^\infty(\bD_Q))$, again by \eqref{eq:inverse.pencil.bounded}. We can now turn \eqref{eq:factorization.Arora.PhD} into
\begin{equation}\label{eq:factorization.Arora.PhD.rearranged}
    \alpha(Z)\gamma(Z)^{-1}=\beta(Z)\bmat{I&0&0\\0&I&0\\0&0&\fr(Z)},
\end{equation}
as functions on $s\bD_Q$, which yields 
\begin{equation*}
    \det\alpha(X)\det\gamma(X)^{-1}=\det\beta(X)\det\fr(X)\foral X\in\bD_Q.
\end{equation*}
Since $\fr$ is assumed to be $Q$-stable, this shows that $\alpha$ is $Q$-stable as well. Thus, $\alpha$ is left cyclic by Theorem \ref{mainthm:cyc.matrix.free.poly}. Therefore, we can find a net $(H_\iota)$ in $M_e(H^\infty(\bD_Q))$ such that $H_\iota \alpha \xrightarrow{w^*} \gamma$. Next, multiplying both sides of \eqref{eq:factorization.Arora.PhD.rearranged} by $H_\iota$ yields
\begin{equation}\label{eq:factorization.Arora.PhD.multipliedH}
    H_\iota(Z)\alpha(Z)\gamma(Z)^{-1}=H_\iota(Z)\beta(Z)\bmat{I&0&0\\0&I&0\\0&0&\fr(Z)}=\bmat{* & * & * \\ * & * & * \\ * & * & J_\iota(Z)\fr(Z)},
\end{equation}
where $J_\iota(Z)$ is an element of $M_k(H^\infty(\bD_Q))$. By Proposition \ref{prop:weak-*.topology.properties}(4), the left hand side of \eqref{eq:factorization.Arora.PhD.multipliedH} converges weak-* to $I$. Since weak-* convergence is equivalent to entrywise weak-* convergence by Proposition \ref{prop:weak-*.topology.properties}(1), from \eqref{eq:factorization.Arora.PhD.multipliedH} we obtain $J_\iota\fr \xrightarrow{w^*} I$. Hence, $\fr$ is left cyclic by Lemma \ref{lem:cyclic.iff.identity}.

For the proof of right cyclicity, we proceed from \eqref{eq:factorization.Arora.PhD} similarly to \eqref{eq:factorization.Arora.PhD.rearranged} except with $\beta(Z)^{-1}$ on the LHS and $\gamma(Z)$ in the RHS. The rest of the argument is exactly the same, and so this completes the proof. \hfill \qedsymbol

\vspace{2mm}

Since the domain of every NC rational function in $H^\infty(\fB_d)$ contains a row ball of radius strictly greater than one (see \cite[Theorem A]{JMS-ratFock}), the following corollary is immediate. 

\begin{corollary}\label{cor:cyclicity.rational.Hardy.space}
    A matrix of NC rational functions $\fr\in M_k(H^\infty(\fB_d))$ is left/right cyclic if and only if it is $\fB_d$-stable.
\end{corollary}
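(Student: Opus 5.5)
The plan is to obtain the corollary directly from Theorem \ref{mainthm:cyc.NC.rationals} applied with $\bD_Q = \fB_d$, that is, with $Q(Z) = \bmat{Z_1 & \dots & Z_d}$ as in Example \ref{eg:NC.row.ball.polydisk}. The only hypothesis of that theorem not already given is the domain condition: there must be some $s>1$ with $s\fB_d \subset \dom \fr$. The argument therefore reduces to checking this condition.

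\textbf{Necessity.} If $\fr$ is left or right cyclic, then $\fr$ is $\fB_d$-stable. This needs no domain hypothesis. Suppose $G_\iota \fr \xrightarrow{w^*} I$. By weak-* continuity of the point evaluations $\Phi_X$ (Proposition \ref{prop:uniq.pre-dual.matrix.case}), we get $G_\iota(X)\fr(X) \to I$ for every $X \in \fB_d$. Taking determinants gives $\det \fr(X) \neq 0$, exactly as in the first half of the proof of Theorem \ref{mainthm:cyc.matrix.free.poly}.

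\textbf{Sufficiency.} Assume $\fr = (\fr_{\lambda,\mu}) \in M_k(H^\infty(\fB_d))$ is $\fB_d$-stable. Each entry $\fr_{\lambda,\mu}$ is a scalar NC rational function lying in $H^\infty(\fB_d)$, and $H^\infty(\fB_d) \subset \bH^2_d$ via the multiplier identification (apply to the constant $1$). By \cite[Theorem A]{JMS-ratFock}, each entry then satisfies $s_{\lambda,\mu}\fB_d \subset \dom \fr_{\lambda,\mu}$ for some $s_{\lambda,\mu} > 1$. We set $s := \min_{\lambda,\mu} s_{\lambda,\mu} > 1$. Since the row balls are nested, $s\fB_d \subset \bigcap_{\lambda,\mu}\dom\fr_{\lambda,\mu} = \dom \fr$, where this last equality is the convention used in \eqref{eq:domain.of.big.pencil}. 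Theorem \ref{mainthm:cyc.NC.rationals} then yields left and right cyclicity.

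\textbf{Main point to check.} The one step needing care is the entrywise use of \cite[Theorem A]{JMS-ratFock}, which is stated for scalar rational functions in $\bH^2_d$. Two things make this work: membership of a scalar entry of $\fr$ in $H^\infty(\fB_d)$ places it in $\bH^2_d$, and the domain of a matrix of rational functions is the intersection of the entries' domains. With these, the finite minimum over the $k^2$ entries gives a uniform radius $s>1$, and the proof is complete.
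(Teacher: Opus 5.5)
Your proposal is correct and follows the paper's route. The paper deduces this corollary immediately from Theorem~\ref{mainthm:cyc.NC.rationals} with $\bD_Q=\fB_d$, citing \cite[Theorem A]{JMS-ratFock} for the fact that the domain of an NC rational function in $H^\infty(\fB_d)$ contains $s\fB_d$ for some $s>1$; your entrywise use of that result with $s=\min_{\lambda,\mu}s_{\lambda,\mu}$, and your determinant argument for necessity, spell out details the paper leaves implicit.
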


We also obtain the following result which was first observed in \cite[Theorem C]{JMS-ratFock} for $k=1$; see also \cite[Theorem 3.7]{AroraPhD} for an alternate proof.

\begin{corollary}\label{cor:cyclic.NC.rat.Hardy}
    A matrix of NC rational functions $\fr\in M_k(\bH_d^2)$ is cyclic if and only if it is $\fB_d$-stable.
\end{corollary}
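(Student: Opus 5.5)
The plan is to reduce to Corollary \ref{cor:cyclicity.rational.Hardy.space} (equivalently, Theorem \ref{mainthm:cyc.NC.rationals} with $\bD_Q = \fB_d$), and then transfer weak-* cyclicity in $M_k(H^\infty(\fB_d))$ to norm cyclicity in $M_k(\bH^2_d)$, following the argument of Corollary \ref{cor:cyclicity.free.Hardy.space}.

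\emph{Necessity.} Suppose $\fr$ is cyclic in $M_k(\bH^2_d)$. Then there are $G_n \in M_k(H^\infty(\fB_d))$ with $G_n \fr \to I$ in norm. Since point evaluations $F \mapsto F(X)$ are bounded on $M_k(\bH^2_d)$ for every $X \in \fB_d$, we get $G_n(X)\fr(X) \to I$. Taking determinants gives $\det \fr(X) \neq 0$, so $\fr$ is $\fB_d$-stable.

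\emph{Sufficiency.} First, by \cite[Theorem A]{JMS-ratFock} applied entrywise, $\fr \in M_k(\bH^2_d)$ gives some $s>1$ with $s\fB_d \subset \dom \fr$. By \cite[Corollary 3.2]{Shalit-Shamovich-spec-rad}, $\fr$ is then bounded on $\fB_d$, so $\fr \in M_k(H^\infty(\fB_d))$. Theorem \ref{mainthm:cyc.NC.rationals} (with $Q$ giving $\fB_d$) now shows that $\fr$ is left cyclic in $M_k(H^\infty(\fB_d))$. Hence there is a net $(H_\iota)$ with $H_\iota \fr \xrightarrow{w^*} I$.

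Next we transfer this convergence to $\bH^2_d$. By Proposition \ref{prop:weak-*.topology.properties}(1), each entry of $H_\iota\fr$ converges weak-* in $H^\infty(\fB_d)$. By Davidson--Pitts \cite{Davidson-Pitts-inv}, weak-* convergence coincides with WOT convergence of the corresponding left multipliers on $\bH^2_d$. Applying these multipliers to the constant function $1 \in \bH^2_d$ shows that each entry converges weakly in $\bH^2_d$. Therefore $H_\iota\fr \to I$ weakly in $M_k(\bH^2_d)$, since $M_k(\bH^2_d)$ is a finite orthogonal sum of copies of $\bH^2_d$. Finally, $\fI^2(\fr)$ is a convex subspace, so its weak closure equals its norm closure. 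Thus $I \in [\fr]^2$, and cyclicity follows.

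The main point needing care is the identification of entries. We need that an entry of $H_\iota \fr$, viewed as a multiplier, applied to $1$ returns the same element as that entry viewed as a function in $\bH^2_d$; this is exactly the inclusion $H^\infty(\fB_d)\subset \bH^2_d$ via $F\mapsto F\cdot 1$. WOT convergence of a net $M_\iota\to M$ gives $\langle M_\iota 1, g\rangle \to \langle M1, g\rangle$ for all $g$, which is precisely weak convergence in $\bH^2_d$. Everything else is a direct citation of the earlier results.
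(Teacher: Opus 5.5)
Your proposal is correct and follows essentially the paper's route. You use \cite[Theorem A]{JMS-ratFock} to place $\fr$ in $M_k(H^\infty(\fB_d))$, then invoke weak-* cyclicity (Theorem \ref{mainthm:cyc.NC.rationals} for $\fB_d$, i.e., Corollary \ref{cor:cyclicity.rational.Hardy.space}), and finally transfer weak-* convergence to weak and hence norm convergence in $M_k(\bH^2_d)$, exactly as in Corollary \ref{cor:cyclicity.free.Hardy.space}. The only difference is that you write out details the paper leaves implicit: the entrywise choice of $s>1$ and the identification of a multiplier with its action on $1$.
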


\begin{proof}
    First, note that an NC rational function is an element of $\bH_d^2$ if and only it is in $H^\infty(\fB_d)$; see \cite[Theorem A]{JMS-ratFock}. Thus, since $\fr\in M_k(\bH_d^2)$ by assumption, we have $\fr\in M_k(H^\infty(\fB_d))$. The rest of the proof is identical to the proof of Corollary \ref{cor:cyclicity.free.Hardy.space} (using Corollary \ref{cor:cyclicity.rational.Hardy.space}).
\end{proof}

\subsection{NC parallel sum function}\label{subsec:NC.parallel.sum} Up to this point, we have used approximation techniques involving the domain of a stable NC rational function to determine its cyclicity. However, we present a class of NC functions in this subsection that demonstrates cyclicity by analyzing the range of these functions. In particular, we introduce the \emph{NC parallel sum function} on the NC bidisk $\fD_2$.

Given a finite-dimensional Hilbert space $\cH$ and any two Hermitian positive semi-definite (p.s.d.) operators $A, B \in \sB(\cH)$ (i.e., $A, B \succeq 0$ in the L\"owner order), the \emph{parallel sum} $A : B$ is defined as
\begin{equation*}
    A : B = A (A + B)^\dagger B.
\end{equation*}
Here, $(A + B)^\dagger$ represents the Moore--Penrose (generalized) inverse of $A + B$. When $A$ and $B$ are also non-singular, we have the following equivalent ways of defining the parallel sum:
\begin{equation*}
    A : B = A (A+B)^{-1} B = (A^{-1} + B^{-1})^{-1} = B (A + B)^{-1} A = B : A.
\end{equation*}

The parallel sum of matrices was introduced by Anderson and Duffin \cite{AD-parallel-sum} to generalize a certain network synthesis procedure, and was later generalized to bounded, non-invertible operators on infinite-dimensional spaces by Fillmore and Williams \cite{FW-parallel-sum-inft-dim} through the study of operator ranges. Since then, the parallel sum has appeared in several influential papers in various contexts: (i) the foundation for harmonic operator means by Kubo and Ando \cite{Kubo-Ando-harmonic-mean}, (ii) a streamlined proof of Lieb and Ruskai's theorem on strong sub-additivity of quantum mechanical entropy \cite{Lieb-Ruskai-proof}, given by Aizenman and Cipolloni \cite{Aizenman-Cipolloni-proof}, (iii) obtaining a Lebesgue-type decomposition for positive operators by Ando \cite{Ando-Lebesgue}, etc.

Motivated by this idea, we define the NC parallel sum function on $\fD_2$ as
\begin{equation*}
    \fP(Z,W) := (I - Z)(2I - Z - W)^{-1}(I - W) = \big( (I - Z)^{-1} + (I - W)^{-1} \big)^{-1} = (I - W)(2I - Z - W)^{-1}(I - Z).
\end{equation*}
In this subsection, we will show that $\fP$ is a contractive stable NC rational function on $\fD_2$ that cannot be extended uniformly across the boundary, and establish its cyclicity using properties of \emph{accretive operators}. In fact, we shall prove a general result about cyclicity of stable \emph{accretive NC matrix functions}, i.e., $F \in M_k(H^\infty(\bD_Q))$ such that $\re F(X) \succeq 0$ for all $X \in \bD_Q$, and show that $\fP$ is accretive.

At the scalar level, we have
\begin{equation*}
    f(z,w) := \fP\vert_{\bD^2}(z,w) = \frac{(1-z)(1-w)}{2-z-w} = \left(\frac{1}{1-z} + \frac{1}{1-w} \right)^{-1} \foral (z,w) \in \bD^2.
\end{equation*}
This function appeared in the context of stability of digital filters in \cite{Goodman-scalar-parallel-sum}, where it was established that $|f| < 1$ on $\bD^2$ as follows.
\begin{enumerate}
    \item If $z \in \bD$, then $\re \left(\tfrac{1}{1-z}\right) > \frac{1}{2}$.
    \item Thus, $\re \left( \tfrac{1}{1-z} + \frac{1}{1-w} \right) > 1$ for all $(z,w) \in \bD^2$.
    \item Take the inverse above and use the fact that $\re \alpha > 1 \implies |\alpha| > 1$.
\end{enumerate}
This idea can be essentially generalized to $\fP$ through known properties of accretive operators, so let us briefly recount the required facts.

\subsubsection*{\texorpdfstring{\textbf{Accretive operators and cyclicity}}{Accretive operators and cyclicity}} An operator $A \in \sB(\cH)$ on any Hilbert space $\cH$ is said to be accretive if $\re A \succeq 0$, i.e.,
\begin{equation*}
    \re \langle Av, v \rangle_\cH \geq 0 \foral v \in \cH.
\end{equation*}
$A$ is said to be \emph{$\delta$-accretive} (or \emph{strongly accretive}) if $\re A \succeq \delta I$ for some $\delta > 0$.

\begin{remark}
    One generally considers unbounded/densely-defined accretive operators, wherein it becomes important to highlight if the operator is maximal accretive in a sense. In the case when the operator is bounded, this distinction need not be established as such an operator is trivially maximal accretive.
\end{remark}

The following properties of accretive operators are well-known (see, e.g., \cite[Sections V.3.10-11]{Kato-book}).

\begin{proposition}\label{prop:accretive.op.properties}
    If $A \in \sB(\cH)$ is accretive, then the following hold.
    \begin{enumerate}
        \item If $A$ is invertible then $A^{-1}$ is accretive.
        \item $(A + \lambda I)^{-1}$ exists for all $\lambda > 0$. Moreover,
        \begin{enumerate}
            \item $(A + \lambda I)^{-1}$ and $A (A + \lambda I)^{-1} = (A + \lambda I)^{-1} A$ are accretive,
            \item $\|(A + \lambda I)^{-1}\| \leq \frac{1}{\lambda}$, and
            \item $\|A (A + \lambda I)^{-1}\| \leq 1$.
        \end{enumerate}
        \item If $A$ is $\delta$-accretive, then $\|A^{-1}\| \leq \frac{1}{\delta}$.
    \end{enumerate}
\end{proposition}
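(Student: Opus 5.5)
The plan is to reduce everything to elementary inner-product estimates for the numerical range $\re\langle Av,v\rangle \ge 0$, treating the three items in order of dependence. The key fact used throughout is that for accretive $A$ and any $v$,
\begin{equation*}
    \re\langle (A+\lambda I)v, v\rangle \geq \lambda\|v\|^2,
\end{equation*}
so $A+\lambda I$ is $\lambda$-accretive for every $\lambda>0$. Hence item (3) is the basic lemma, and (2)(b) follows from (3) applied to $A+\lambda I$. The same estimate also gives bijectivity of $A + \lambda I$, which justifies the inverse in (2).

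\textbf{Item (3).} Suppose $\re A \succeq \delta I$. By Cauchy--Schwarz,
\begin{equation*}
    \delta\|v\|^2 \leq \re\langle Av,v\rangle \leq \|Av\|\|v\|,
\end{equation*}
so $\|Av\|\geq \delta\|v\|$. Thus $A$ is injective with closed range. The adjoint $A^*$ satisfies the same lower bound, since $\re\langle A^*v,v\rangle = \re\langle Av,v\rangle$. Therefore $\ker A^* = 0$, so the range of $A$ is dense, and being closed it is all of $\cH$. Consequently $A$ is invertible with $\|A^{-1}\|\leq 1/\delta$. Applying this to $A+\lambda I$ yields the existence of $(A+\lambda I)^{-1}$ in (2), together with the bound $\|(A+\lambda I)^{-1}\| \leq 1/\lambda$ in (2)(b).

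\textbf{Item (1).} Suppose $A$ is invertible. For any $u$, write $u = Av$; then
\begin{equation*}
    \re\langle A^{-1}u,u\rangle = \re\langle v, Av\rangle = \re\overline{\langle Av,v\rangle} = \re\langle Av,v\rangle \geq 0,
\end{equation*}
so $A^{-1}$ is accretive. Since $A+\lambda I$ is accretive and invertible, this immediately shows $(A+\lambda I)^{-1}$ is accretive, which is the first half of (2)(a).

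\textbf{The remaining parts of (2).} The identity $A(A+\lambda I)^{-1} = (A+\lambda I)^{-1}A$ holds because $A$ commutes with $A+\lambda I$. Using it, we rewrite
\begin{equation*}
    A(A+\lambda I)^{-1} = I - \lambda(A+\lambda I)^{-1}.
\end{equation*}
For accretivity, set $u = (A+\lambda I)v$. Then
\begin{equation*}
    \re\langle A(A+\lambda I)^{-1}u,u\rangle = \re\langle Av, Av+\lambda v\rangle = \|Av\|^2+\lambda\re\langle Av,v\rangle\geq 0.
\end{equation*}
For the contraction bound (2)(c), with $u=(A+\lambda I)v$ we need $\|Av\| \leq \|Av+\lambda v\|$. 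Expanding the right-hand side gives
\begin{equation*}
    \|Av+\lambda v\|^2 = \|Av\|^2 + 2\lambda\re\langle Av,v\rangle + \lambda^2\|v\|^2 \geq \|Av\|^2,
\end{equation*}
which is exactly the required inequality. This step is the only mildly delicate one, since it needs the right substitution $u=(A+\lambda I)v$; once that is chosen, the argument is a direct expansion.

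In finite dimensions the surjectivity step in (3) is automatic, but the argument above covers general $\cH$ in any case.
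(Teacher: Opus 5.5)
Your proof is correct and complete. The paper does not prove this proposition itself: it calls these facts well known and cites Kato \cite[Sections V.3.10--11]{Kato-book}. Kato develops them for closed, possibly unbounded (m-)accretive operators. There, the existence of $(A+\lambda I)^{-1}$ and its bound are either built into the definition of m-accretivity or obtained from numerical-range considerations (for a bounded operator, the spectrum lies in the closure of the numerical range).

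You instead give a direct argument for bounded operators. The single estimate $\re\langle (A+\lambda I)v,v\rangle \geq \lambda\|v\|^2$ drives everything. Surjectivity comes from noting that $A^*$ satisfies the same lower bound, which is a Lax--Milgram-type step. Items (1), (2)(a) and (2)(c) then follow from the substitutions $u = Av$ and $u=(A+\lambda I)v$, together with expanding $\|Av+\lambda v\|^2$.

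Your route gives a short, self-contained verification in exactly the generality the paper uses: bounded operators, in fact the matrices $F(X)$. Its constants are visibly dimension-free, which is the feature the paper stresses right after the statement. The citation instead covers the unbounded theory mentioned in the paper's remark, which is never needed.

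One cosmetic point: the identity $A(A+\lambda I)^{-1} = I-\lambda(A+\lambda I)^{-1}$ holds without the commutation relation and is not used afterwards, so you can drop that sentence.
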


The important observation in the above proposition is that the norm bounds are independent of the dimension of $\cH$. An immediate consequence of these properties is the cyclicity of bounded accretive NC matrix functions on $\bD_Q$.

\begin{corollary}\label{cor:cyclicity.of.accretive.NC.func}
    If $F \in M_k(H^\infty(\bD_Q))$ is $Q$-stable and accretive, then $F$ is left/right cyclic.
\end{corollary}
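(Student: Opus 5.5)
By Lemma \ref{lem:cyclic.iff.identity}, it suffices to produce a bounded net (in fact a sequence) $(G_n) \subset M_k(H^\infty(\bD_Q))$ with $G_n F \to I$ pointwise on $\bD_Q$. Weak-* convergence then follows from Proposition \ref{prop:weak-*.topology.properties}(2). The natural candidate comes from Proposition \ref{prop:accretive.op.properties}(2): for $\lambda > 0$ set
\begin{equation*}
    G_\lambda(X) := \big(F(X) + \lambda I\big)^{-1}, \qquad X \in \bD_Q,
\end{equation*}
and let $\lambda = 1/n \to 0^+$. We handle left cyclicity; the right case is symmetric, since $(F+\lambda I)^{-1}$ commutes with $F$.

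\textbf{Step 1: $G_\lambda$ is a bounded NC function.} Since $F(X)$ is accretive for each $X$, Proposition \ref{prop:accretive.op.properties}(2)(b) gives that $F(X) + \lambda I$ is invertible with $\|G_\lambda(X)\| \le 1/\lambda$, uniformly in $X$ and independently of the level $n$. The function $G_\lambda$ is graded, and it respects direct sums and similarities because $F$ does and matrix inversion respects both. Hence $G_\lambda \in M_k(H^\infty(\bD_Q))$; holomorphy is automatic from local boundedness of NC functions.

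\textbf{Step 2: uniform bound.} By Proposition \ref{prop:accretive.op.properties}(2)(c),
\begin{equation*}
    \|G_\lambda(X) F(X)\| = \|(F(X)+\lambda I)^{-1} F(X)\| \le 1 \foral X \in \bD_Q,\ \lambda>0.
\end{equation*}
So $(G_{1/n} F)$ lies in the closed unit ball of $M_k(H^\infty(\bD_Q))$.

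\textbf{Step 3: pointwise convergence.} Fix $X \in \bD_Q$. By $Q$-stability, $F(X)$ is invertible. We can write
\begin{equation*}
    (F(X) + \lambda I)^{-1} F(X) = \big(I + \lambda F(X)^{-1}\big)^{-1},
\end{equation*}
which tends to $I$ as $\lambda \to 0^+$ by continuity of inversion at the identity. This is the only point where stability enters, and it is essentially the main (mild) obstacle: without invertibility, the limit would merely be a projection onto the range of $F(X)$.

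Combining Steps 2 and 3 with Proposition \ref{prop:weak-*.topology.properties}(2), we get $G_{1/n}F \xrightarrow{w^*} I$. Hence $I \in [F]_L$, and similarly $I \in [F]_R$ using $F G_{1/n}$.
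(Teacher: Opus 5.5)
Your proposal is correct and follows essentially the same route as the paper. Both use $(F+\lambda I)^{-1}$ with the bounds $\|(F+\lambda I)^{-1}\|_Q \le 1/\lambda$ and $\|F(F+\lambda I)^{-1}\|_Q \le 1$ from Proposition \ref{prop:accretive.op.properties}(2), bounded pointwise convergence to $I$ via Proposition \ref{prop:weak-*.topology.properties}(2), and then Lemma \ref{lem:cyclic.iff.identity}. You also spell out the pointwise limit through the identity $(F(X)+\lambda I)^{-1}F(X) = (I+\lambda F(X)^{-1})^{-1}$ and check that $(F+\lambda I)^{-1}$ is an NC function, two details the paper leaves implicit.
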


\begin{proof}
    Since $F$ is accretive, Proposition \ref{prop:accretive.op.properties}$(2)$ shows that the sequence
    \begin{equation*}
        \{ F_\lambda := (F + \lambda I)^{-1} \ : \ \lambda > 0 \}
    \end{equation*}
    consists of matrices of bounded NC maps. In fact, for all $\lambda > 0$ we have
    \begin{equation*}
        \|F_\lambda \|_Q \leq \frac{1}{\lambda} \qand \|F F_\lambda\|_Q = \|F_\lambda F\|_Q \leq 1.
    \end{equation*}
    We know from Proposition \ref{prop:weak-*.topology.properties}$(2)$ that weak-* convergence coincides with pointwise convergence on bounded sets. Thus, it is clear from the above equation and the $Q$-stability of $F$ that $F_\lambda F \xrightarrow{w^*} I$ and $F F_\lambda \xrightarrow{w^*} I$. Thus, $F$ is left/right cyclic by Lemma \ref{lem:cyclic.iff.identity}.
\end{proof}

\subsubsection*{\texorpdfstring{\textbf{Properties of the NC parallel sum}}{Properties of the NC parallel sum}} As discussed in the introduction to this subsection, we now replicate Goodman's \cite{Goodman-scalar-parallel-sum} argument for the scalar parallel sum function to show that $\fP$ is contractive, and use Corollary \ref{cor:cyclicity.of.accretive.NC.func} to prove its cyclicity.

\begin{corollary}\label{cor:cyc.of.NC.par.sum}
    The NC parallel sum $\fP$ is a contractive stable accretive NC rational function on $\fD_2$ that cannot be extended uniformly across the boundary. Thus, it is left/right cyclic.
\end{corollary}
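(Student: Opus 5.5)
Fix $(Z,W) \in \fD_2(n)$. The plan is to transplant Goodman's three-step scalar argument to the operator level, using Proposition \ref{prop:accretive.op.properties} in place of the elementary scalar facts.

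\textbf{Step 1: the resolvents $(I-Z)^{-1}$ and $(I-W)^{-1}$ are strongly accretive.} Since $\|Z\|<1$, $I - Z$ is invertible. For $v \neq 0$ write $u = (I-Z)^{-1}v$. Then
\begin{equation*}
    \re\langle (I-Z)^{-1}v, v\rangle = \re\langle u, (I-Z)u\rangle = \|u\|^2 - \re\langle u, Zu\rangle .
\end{equation*}
Because $\|Z\| < 1$ we also have $\|v\| \leq (1+\|Z\|)\|u\|$. Expanding $\|(I-Z)u\|^2 \ge \ldots$, or more directly writing $\re (I-Z)^{-1} - \tfrac12 I = \tfrac12 (I-Z)^{-1}(I - ZZ^*)(I-Z)^{-*}$ (a standard Cayley-type identity), gives
\begin{equation*}
    \re (I-Z)^{-1} \succeq \tfrac12 I + \tfrac12 (1-\|Z\|^2)\,(1+\|Z\|)^{-2} I \succ \tfrac12 I .
\end{equation*}
The same holds for $W$. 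Hence $T := (I-Z)^{-1} + (I-W)^{-1}$ satisfies $\re T \succeq (1+\epsilon) I$ for some $\epsilon = \epsilon(\|Z\|,\|W\|) > 0$.

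\textbf{Step 2: contractivity and accretivity of $\fP$.} By Proposition \ref{prop:accretive.op.properties}(3), $T$ is invertible with $\|T^{-1}\| \leq (1+\epsilon)^{-1} < 1$, and by part (1), $T^{-1}$ is accretive. Invertibility of $T$ also shows that $2I - Z - W = (I-Z)\,T\,(I-W)$ is invertible. So the three formulas for $\fP$ agree and define a bounded NC rational function on $\fD_2$ with $\|\fP\|_{\fD_2} \le 1$ and $\re\fP \succeq 0$. Stability follows because $\fP(Z,W) = T^{-1}$ is invertible, which gives $\det \fP(Z,W) \neq 0$. Corollary \ref{cor:cyclicity.of.accretive.NC.func} then yields left/right cyclicity.

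\textbf{Step 3: no uniform extension across the boundary.} This is the step I expect to be the main obstacle, because it needs a specific non-commuting choice of points. If $\fP$ extended uniformly (i.e.\ to some $s\fD_2$ in the sense of Theorem \ref{mainthm:cyc.NC.rationals}), it would be bounded near every boundary point in the uniform topology. At the scalar level alone, $f(z,w) = (1-z)(1-w)/(2-z-w)$ stays bounded near $(1,1)$ along the diagonal, so scalar points are not enough. I would instead use that every realization of $\fP$, or the pencil $2I - Z - W$, becomes singular at $(I,I)$, and argue that no $s\fD_2$ with $s>1$ lies in $\dom \fP$.

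Concretely, I would look for matrix points $(Z,W) \in s\fD_2$ at which $2I - Z - W$ is singular while $(I-Z)$ and $(I-W)$ remain invertible, and for which $\fP$ genuinely blows up. First I would try $2\times 2$ choices $Z = I + tH$ and $W = I - tH + tK$ with $H,K$ non-commuting nilpotent or rotation perturbations, and $t$ small so that the points lie in $s\fD_2$. I would then check that $\fP = (T)^{-1}$ requires $T$ invertible, while $T$ has a kernel vector, using the second formula $\big((I-Z)^{-1} + (I-W)^{-1}\big)^{-1}$.

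If that fails, the fallback is to show unboundedness of $\fP$ on $r\fD_2$ as $r \to 1$ at matrix levels via a suitable family of points. That would contradict the boundedness on $s\fD_2$ guaranteed by \cite[Corollary 3.2]{Shalit-Shamovich-spec-rad} if $\fP$ were defined on $s\fD_2$.
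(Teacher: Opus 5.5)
Your Steps 1 and 2 are correct and follow the paper's route. The identity $2\re(I-Z)^{-1}-I=(I-Z)^{-1}(I-ZZ^*)(I-Z)^{-*}$ is the same Cayley-type computation the paper uses, with the factors on the other side. It gives $\re\big((I-Z)^{-1}+(I-W)^{-1}\big)\succeq I$, and Proposition~\ref{prop:accretive.op.properties}(1),(3) together with Corollary~\ref{cor:cyclicity.of.accretive.NC.func} then yield contractivity, accretivity, stability and cyclicity. Your extra $\epsilon$ is harmless but not needed.

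The gap is Step 3. You leave the claim that $\fP$ cannot be extended uniformly across the boundary as a search plan, and the plan rests on a false premise: scalar points are enough. You only looked along the diagonal. At level one, write $a=1-z$, $b=1-w$, so $f=ab/(a+b)$. Then $(1,1)$ is a point of indeterminacy, where numerator and denominator both vanish; this is what the paper means by a singularity of the second kind. Concretely, $f(z,z)=(1-z)/2\to 0$ as $z\to 1$. On the other hand, for each fixed $\theta\neq 0$,
\[
f(re^{i\theta},re^{-i\theta})=\frac{1-2r\cos\theta+r^2}{2-2r\cos\theta}\longrightarrow 1 \quad (r\to 1^-).
\]
So $f$ has no continuous extension to $(1,1)$, even from inside $\bD^2$. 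Hence $\fP$ admits no extension to any $s\fD_2$ with $s>1$, since such an extension would be continuous at level one. This one-line scalar observation is the paper's entire argument for this part.

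Your own primary plan also works with $1\times 1$ matrices. For $0<\epsilon<s-1$, the point $(1-\epsilon,1+\epsilon)\in s\fD_2(1)$ satisfies $2-z-w=0$ while $(1-z)(1-w)=-\epsilon^2\neq 0$. So $T=\epsilon^{-1}-\epsilon^{-1}=0$ is singular, and $f$ is unbounded near this point. This gives $s\fD_2\not\subset\dom\fP$. No non-commuting points are needed.

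Your fallback cannot work at all. Your own Step 2 shows $\|\fP\|_{\fD_2}\le 1$, so $\fP$ is uniformly bounded on every $r\fD_2$ with $r<1$. The obstruction is discontinuity at the boundary point $(I,I)$, together with poles just outside $\overline{\fD_2}$, not unboundedness inside the domain.
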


\begin{proof}
    Note that $(I - Z)^{-1}$ is $\frac{1}{2}$-accretive for any $Z \in \fD_1$. Indeed, we have
    \begin{align*}
        (I - Z^*)^{-1} + (I - Z)^{-1} - I &= (I - Z^*)^{-1} \big( (I - Z) + (I - Z^*) - (I - Z^*)(I - Z) \big) (I - Z)^{-1} \\
        &= (I - Z^*)^{-1} (I - Z^* Z) (I - Z)^{-1} \\
        &\succeq 0.
    \end{align*}
    It then follows that
    \begin{equation*}
        \re\big((I - Z)^{-1} + (I - W)^{-1}\big) \succeq I \foral (Z,W) \in \fD_2,
    \end{equation*}
    i.e., $\fP^{-1}$ is $1$-accretive. Proposition \ref{prop:accretive.op.properties}$(3)$ shows at once that $\|\fP\|_Q \leq 1$ and so $\fP$ is contractive. That $\fP$ is $\fD_2$-stable is immediate from the definition, and that it is accretive follows from Proposition \ref{prop:accretive.op.properties}$(1)$. That $\fP$ is not uniformly continuous across the boundary is immediate by noting that, even at the scalar level, we have a singularity of the second kind at the boundary point $(1,1)$. Cyclicity of $\fP$ then follows from Corollary \ref{cor:cyclicity.of.accretive.NC.func}.
\end{proof}

\begin{remark}\label{rem:failure.NGN.par.sum.cyc}
    Note that the singularity at $(I,I)$ prevents us from using Theorem \ref{mainthm:cyc.NC.rationals} to determine the cyclicity of $\fP$. It might also be tempting to apply the NGN-type inequality for the atoms $I - Z$ and $I - W$ to somehow argue as in Theorem \ref{mainthm:cyc.matrix.free.poly} that $\fP$ is left/right cyclic. That is to say, we note that
    \begin{equation*}
        (I - rX)^{-1} \fP(X,Y) \to (2I - X - Y)^{-1} (I - Y)
    \end{equation*}
    for each $(X,Y) \in \fD_2$ as $r \to 1^-$. In particular, one may wish to employ an NGN-type bound for the $I - Z$ factor and show that the convergence above is weak-* convergence, so that
    \begin{equation*}
        [\fP]_L \supseteq [(2I - Z - W)^{-1} (I - W)]_L \supseteq [I - W]_L = H^\infty(\fD_2),
    \end{equation*}
    as $I - W$ is cyclic. This clearly does not work, however, because $(2I - Z - W)^{-1} (I-W)$ is unbounded on $\fD_2$.
\end{remark}

\begin{example}[The decoupled NC parallel sums]\label{eg:decoupled.NC.par.sum}
    It is interesting to note that the NC parallel sum function is symmetric with respect to the variables by definition and so, in a way, the inputs are `coupled' together. One way to decouple these variables is to consider
    \begin{align*}
        \fP_L(Z,W) := (2I - Z - W)^{-1} (I - Z)(I - W), \\
        \fP_R(Z,W) := (I - Z)(I - W) (2I - Z - W)^{-1}.
    \end{align*}
    It is straightforward to verify in this case that, in general, $\fP_L(Z,W) \neq \fP_L(W,Z)$ and $\fP_R(Z,W) \neq \fP_R(W,Z)$ on $\fD_2$. Moreover, an argument similar to the one given in Remark \ref{rem:failure.NGN.par.sum.cyc} would immediately yield that $\fP_L$ and $\fP_R$ are left and right cyclic, respectively, using Theorem \ref{mainthm:cyc.matrix.free.poly}. However, neither of these decoupled NC parallel sums lie in $H^\infty(\fD_2)$; simply test $\|\fP_L(X_t, Y_t)\|$ and $\|\fP_R(X_t, Y_t)\|$ as $t \to 0^+$ for
    \begin{equation*}
        X_t := \cos(t) \bmat{\cos(t) & \sin(t) \\ \sin(t) & -\cos(t)} \qand Y_t := \cos(t) \bmat{\cos(t) & -\sin(t) \\ -\sin(t) & -\cos(t)}.
    \end{equation*}
\end{example}

\section{Concluding remarks}

\subsection{NGN-type bounds using Fornasini--Marchesini realizations}\label{subsec:NGN.using.FM.realzn} The key step in the proof of Proposition \ref{prop:improved.bounds} is to use the fact that $P\sim_{\bU\bL} L_A$ to obtain an upper bound for $\big\|\big (P^{(r)} \big)^{-1}\big\|_Q$. Naturally, one can ask if there are other ways of bounding $\big\|\big (P^{(r)} \big)^{-1}\big\|_Q$ that lead to improved bounds in Proposition \ref{prop:improved.bounds}. In order to explore this question further, let $P\in \bC \langle Z \rangle$ be a $Q$-stable atom with $P(0)=I$, and let $(A,B,C,I)$ be a minimal \emph{Fornasini-Marchesini (FM) realization} of $P^{-1}$, so that $P(Z)^{-1}=I+C^*L_A(Z)^{-1}B(Z)$ (see, e.g., \cite[Section 5.2]{Helton-Klep-Volcic-Free-factor} and the references therein for background on FM-realizations).

It was pointed out to us by Jurij Vol\v{c}i\v{c} that, in this situation, $L_{A}$ will be irreducible and
\begin{equation}\label{eq:FM.factorization}
    \bmat{I & 0\\ C^*L_{A_0}(Z)^{-1} & I}\bmat{I& 0\\ 0& P(Z)}\bmat{L_{A_0}(Z) & B(Z)\\ 0 & I}=\bmat{I & B(Z)\\ 0 & I} \bmat{L_{A}(Z) & 0\\ 0 & I}\bmat{I & 0\\ C^* &  I},
\end{equation}
where $L_{A_0}$ is a linear pencil belonging to a certain minimal FM-realization of $P$. In particular, $L_{A_0}(X)$ is invertible for all $X\in \bM^d$.
By taking the inverse on both sides of  \eqref{eq:FM.factorization}, and using the upper (resp. lower) triangular form of the matrices to the left and right of $I\oplus P(Z)$, we get that $P^{-1}$ is equal to the $(2,2)$ entry of the inverse of the RHS of \eqref{eq:FM.factorization}. This allows us to obtain the following bound:
\begin{equation}\label{eq:FM.factorization.bound}
    \big\|\big (P^{(r)} \big)^{-1}\big\|_Q\leq \left\|\bmat{I & 0\\ -C^* &  I}\right\| \ \left\|\bmat{L_{A}(rZ)^{-1} & 0\\ 0 & I}\right\|_Q \ \left\|\bmat{I & -B(rZ)\\ 0 & I}\right\|_Q. 
\end{equation}
Next, using the above inequality and the fact that $L_A$ is irreducible, one can proceed exactly as in the proof of Proposition \ref{prop:improved.bounds}, to obtain upper bounds for $\sup_{r < 1} \big\| P \big(P^{(r)} \big)^{-1} \big\|_Q$ and $\sup_{r < 1} \big\|  \big(P^{(r)} \big)^{-1} P \big\|_Q$. However, in general this will not lead to an improvement of the bounds in Proposition \ref{prop:improved.bounds}, as the following example shows.

\begin{example}
    Consider the polynomial $P(Z,W)=I-\frac{ZW}{2}-\frac{WZ}{2}$ from Examples \ref{eg:Higman.linearization} and \ref{eg:concrete.bounds}(1). One can check that $(A,B,C,I)$, where $A=(A_Z,A_W)$ is as in Example \ref{eg:Higman.linearization}, $C=e_1\in \bC^3$ and
    \begin{equation*}
        B_Z=\bmat{0 & \frac{1}{\sqrt{2}} & 0}^t \qand B_W=\bmat{0& 0& \frac{1}{\sqrt{2}}}^t,
    \end{equation*}
    is an FM-realization of $P^{-1}$ (One can obtain this by using the linearization from Example \ref{eg:Higman.linearization} and arguing as in \cite[Remark 5.2]{Helton-Klep-Volcic-Free-factor}). Moreover, this realization is minimal since $A$ is irreducible by Example \ref{eg:Higman.linearization}. Next, it is readily checked that 
    \begin{equation*}
        \left\|\bmat{I & 0\\ -C^* &  I}\right\|= \frac{1+\sqrt{5}}{2} \qand \left\|\bmat{I & -B(Z,W)\\ 0 & I}\right\|_{\fD_2}\geq \left\|\bmat{I & -B(1,1)\\ 0 & I}\right\|=\frac{1+\sqrt{5}}{2}.
    \end{equation*}
    Moreover, by arguing exactly as in the beginning of the proof of Lemma \ref{lem:NGN.linear.pencil}, we get
    \begin{equation*}
        \left\|\bmat{L_{A}(rZ)^{-1} & 0\\ 0 & I}\right\|_{\fD_2}\leq \frac{1}{1-r}.
    \end{equation*}
    Therefore, for this minimal FM-realization of $P^{-1}$, the best possible bound one can extract from \eqref{eq:FM.factorization.bound} is
    \begin{equation*}
         \big\|\big (P^{(r)} \big)^{-1}\big\|_{\fD_2}\leq \frac{\big(1+\sqrt{5}\big)^2}{4(1-r)}. 
    \end{equation*}
    Given this inequality, we can proceed as at the end of the proof of Proposition \ref{prop:improved.bounds} to obtain
    \begin{equation*}
        \sup_{r < 1} \big\| (P^{(r)})^{-1} P \big\|_{\fD_2}\leq 1+\frac{\big(1+\sqrt{5}\big)^2}{2}\approx 6.24.
    \end{equation*}
    However, this is worse than the bound of $\leq 3$ that we obtained in Example \ref{eg:concrete.bounds}(1) by directly applying Proposition \ref{prop:improved.bounds} to $P$.
\end{example}

\subsection{Miscellaneous open questions} We close our discussion with some curious open problems that are suitable for further research.

\subsubsection*{\texorpdfstring{\textbf{Left vs. right cyclicity}}{Left vs. right cyclicity}} It was observed in \cite[Example 3.4]{JM-inner.e.g.} that a \emph{right inner} function $F \in H^\infty(\fB_d) \subset \bH^2_d$ may not necessarily be \emph{left inner}. Here, by left/right inner we mean a left/right isometric multiplier of $\bH^2_d$. Both cyclic and inner functions are important to understand the structure of these function spaces, so it is natural to ask in general:

\begin{problem}
    If $F \in M_k(H^\infty(\bD_Q))$ is, say, left cyclic then is it true that $F$ is also right cyclic?
\end{problem}

Our Theorem \ref{mainthm:cyc.NC.rationals} and Corollary \ref{cor:cyclicity.of.accretive.NC.func} provide situations where the answer to this problem is affirmative, but we do not currently have a counter-example nor a proof of this in general.

\subsubsection*{\texorpdfstring{\textbf{Non-cyclic stable NC rational functions}}{Non-cyclic stable NC rational functions}} As noted earlier, cyclicity of NC rational functions in $H^\infty(\fB_d)$ is equivalent to $\fB_d$-stability. Now, all of the examples and main theorems in this article consider $Q$-stable NC rational functions that turn out to be cyclic in $H^\infty(\bD_Q)$, but we do not know if this is always the case.

\begin{problem}
    Does there exist a $\bD_Q$-stable NC rational function that is not left/right cyclic in $H^\infty(\bD_Q)$? 
\end{problem}

\subsubsection*{\texorpdfstring{\textbf{The case of non-square matrix free polynomials}}{The case of non-square matrix free polynomials}} Lastly, it is clear that our technique of using a free NGN-type inequality relies on considering square matrix free polynomials, so we leave open the discussion on cyclicity in $M_{k \times l}(H^\infty(\bD_Q))$ for some $k, \ l \in \bN$ with $k \neq l$.

\section*{Acknowledgments}

We thank Robert T.W. Martin, Orr Moshe Shalit and Jurij Vol\v{c}i\v{c} for suggestions and helpful discussions on related topics.

%\nocite{*}
\bibliographystyle{plain}
\bibliography{bibliography}

%\printbibliography %~~~~~~~~ Use only if switching to Biber

\end{document}